\documentclass[review]{elsarticle}
\usepackage{amsmath}
\usepackage[letterpaper,margin=1.0in]{geometry}
\usepackage{graphicx}
\usepackage{amsfonts}
\usepackage{bm}
\usepackage{xcolor}
\usepackage{multirow}
\usepackage{setspace}
\doublespacing
\usepackage{multicol}
\usepackage{subcaption}
\usepackage{floatrow}
\floatsetup[table]{capposition=top}
\floatsetup[figure]{capposition=bottom}
\newfloatcommand{capbtabbox}{table}[][\FBwidth]
\usepackage{caption}
\captionsetup[table]{position=bottom}
\usepackage[utf8]{inputenc} 
\usepackage{lineno,hyperref}
\modulolinenumbers[5]

\newcommand{\ipbtobf}[2]{\left\langle#1,#2\right\rangle_{\mathcal{F}_{h}^{\partial}}}

\newcommand{\iipbfb}[2]{\left\langle#1,#2\right\rangle_{\mathcal{F}_h}}

\newcommand{\gbold}{\bm{g}}

\newcommand{\nbold}{\bm{n}}

\newcommand{\ubold}{\bm{u}}

\newcommand{\vbold}{\bm{v}}
\newcommand{\wbold}{\bm{w}}

\newcommand{\xbold}{\bm{x}}

\newcommand{\ipbt}[2]{\left\langle#1,#2\right\rangle_{\partial \mathcal{T}_h}}

\newcommand{\ipbtkf}[2]{\left\langle#1,#2\right\rangle_{F}}
\newcommand{\ipbf}[2]{\left\langle#1,#2\right\rangle_{\mathcal{F}_h}}
\newcommand{\iipbf}[2]{\left\langle#1,#2\right\rangle_{\mathcal{F}_h^i}}

\newcommand{\llbracket}{\left[\!\left[}
\newcommand{\rrbracket}{\right] \! \right]}

 
\newcommand{\llcurve}{\{\!\!\{}
\newcommand{\rrcurve}{\}\!\!\}}











\bibliographystyle{amsplain}
%

\begin{document}
\newtheorem{remark}{Remark}
\newtheorem{corollary}{Corollary}
\newtheorem{lemma}{Lemma}
\newtheorem{theorem}{Theorem}
\newtheorem{proof}{Proof}
\def\proof{\par\noindent{\bf Proof.\ } \ignorespaces}
\def\endproof{}
\begin{frontmatter}

\title{Some continuous and discontinuous Galerkin methods and structure preservation for incompressible flows }

\author[mainaddress]{Xi Chen\corref{mycorrespondingauthor}}
\cortext[mycorrespondingauthor]{Corresponding author: xbc5027@psu.edu (Xi Chen)}

\author[mainaddress1]{Yuwen Li}\ead{yuwenli925@gmail.com}

\author[mainaddress2]{Corina Drapaca}\ead{csd12@psu.edu}

\author[mainaddress]{John Cimbala}\ead{jmc6@psu.edu}

\address[mainaddress]{Department of Mechanical Engineering, The Pennsylvania State
University, University Park, PA 16802, USA}

\address[mainaddress1]{Department of Mathematics, The Pennsylvania State University, University Park, PA 16802, USA}

\address[mainaddress2]{Department of Engineering Science and Mechanics, The Pennsylvania State
University, University Park, PA 16802, USA}

\begin{abstract}
In this paper, we present consistent and inconsistent discontinous Galerkin methods for incompressible Euler and Navier-Stokes equations with the kinematic pressure, Bernoulli function and EMAC function. Semi- and fully discrete energy stability of the proposed dG methods are proved in a unified fashion.  Conservation of total energy, linear and angular momentum is discussed with both central and upwind fluxes. Numerical experiments are presented to demonstrate our findings and compare our schemes with conventional schemes in the literature in both unsteady and steady problems. Numerical results show that global conservation of the physical quantities may not be enough to demonstrate the performance of the schemes, and our schemes are competitive and able to capture essential physical features in several benchmark problems.
\end{abstract}


\begin{keyword}
incompressible flows\sep  discontinuous Galerkin method\sep mixed finite  element method \sep energy stability \sep pressure robustness \sep structure preservation
\end{keyword}

\end{frontmatter}

\section{Introduction}
There has been extensive research on Galerkin methods for incompressible flows, see, e.g., \cite{Temam1984,GR1986,Hesthaven07,layton2008introduction,Ern2012} and references therein. Since the pressure is usually viewed as a Lagrange multiplier, it could be naturally decoupled from the velocity by restricting the problem on the divergence-free subspace \cite{girault2012finite,allaire2007numerical} at the continuous level. Since the discrete divergence-free subspace is often not contained in the continuous one, classical $H^{1}$-conforming methods suffer from a loss of numerical accuracy as the velocity error is  affected by the pressure approximation, especially under small viscosity, which is known in the literature as a lack of pressure robustness \cite{john2017divergence}.
In the current paper, we focus on ways to reduce the influences of pressure approximations on velocity approximations. With the help of a grad-div stabilization term \cite{franca1988two}, one may increase pressure robustness although the mass conservation is violated, see, e.g.,  \cite{john2017divergence,olshanskii2004grad,olshanskii2009grad,lube2002stable,olshanskii2002low,case2011connection,linke2011convergence,jenkins2014parameter}. Another remedy is to use $H(\text{div})$-conforming methods that completely remove  pressure influence on the velocity approximation through a carefully chosen finite element  pair~\cite{RT1977,BDM1985,boffi2013mixed}. In this case, the numerical velocity is pointwise divergence-free \cite{john2017divergence,guzman2016h,schroeder2018towards,zhang2005new,falk2013stokes,guzman2014conforming,lehrenfeld2016high}. Moreover, in the vorticity-stream formulation, one could automatically enforce the divergence-free constraint at the PDE level when the spatial dimension is two \cite{GR1986,liu2000high,cai2019high}.
Finally, for discontinuous Galerkin (dG) methods, a common technique for pressure robustness is penalizing the jump of the velocity normal component \cite{guzman2016h,akbas2018analogue,joshi2016post,gresho1990theory}. An  interesting result is reported in \cite{schroeder2017stabilised}, where an element-wise grad-div penalization was used on tensor product meshes for a non-isothermal flow, and an improvement of the mass conservation is observed for an inf-sup stable element pair of equal order. A discrete inf-sup condition involving the pressure jump is constructed in \cite{Ern2012} for both the steady incompressible Stokes and Navier-Stokes equations. In \cite{linke2014role}, reconstruction with the lowest-order divergence-free Raviart-Thomas velocity is used to recover the $L^{2}$-orthogonality of the discretely divergence-free velocities and irrotational vector fields. Other relevant dG methods can be found in e.g., \cite{cockburn2005locally,cesmelioglu2017analysis,cockburn2007note}.

In this paper, we generalize our previous work \cite{xi2020} and present two new classes of dG methods for incompressible Euler and  Navier-Stokes equations. The unified numerical framework in \cite{xi2020} is designed for incompressible Navier-Stokes equations with the kinematic pressure $p$.  Here, this approach is further extended to cover the Bernoulli function $p+\frac{\theta}{2}\ubold\cdot\ubold$, and EMAC function $p-\frac{\theta}{2}\ubold\cdot\ubold$. The Bernoulli function is roughly speaking the “force” one feels when one faces into the wind, an important physical quantity rarely considered as a direct output in computational fluids. The EMAC function was first proposed in \cite{charnyi2017conservation} for conserving energy, linear and angular momentum by $H^{1}$-conforming finite element methods. We refer to \cite{charnyi2019efficient,kundufluid,olshanskii2020longer,schroeder2017pressure} and references therein for more discussions about these two functions.

In addition,
we shall show that both classes of dG methods achieve the same numerical velocity field when the velocity is $H(\text{div})$-conforming, and we shall analyze their conservation property for energy, linear momentum and angular momentum with both central and upwind fluxes. To the best of our knowledge, such frameworks and conservation analysis  for $H(\text{div})$-conforming and dG methods do not exist in the literature. From our dG framework, it is also shown that global energy conservation for inviscid flows is generally not worth pursuing at the expense of pressure robustness.
Nevertheless, all proposed methods in this framework are proved to be energy stable in the  fully-discrete level,  which is an important property in the simulation of turbulent flows (cf.~\cite{fehn2018robust}).

\section{Preliminaries}
Let $\mathcal{T}_h$ denote a conforming and shape-regular simplicial mesh on a  polyhedral domain ${\Omega}$ in $\mathbb{R}^d$ with $d\in\{2,3\}$. Let $\mathcal{F}_{h}$ denote the collection of faces of $\mathcal{T}_h$,  $\mathcal{F}_h^{i}$ the set of interior faces, and $\mathcal{F}_h^{\partial}$ the set of boundary faces. For $K\in\mathcal{T}_h$ and $F\in\mathcal{F}_h$, we use $h_{K}$ to denote the diameter of $K$, $h_F$ the diameter of $F\in\mathcal{F}_h$. For any $(d-1)$-dimensional set $\Sigma$, let $\langle\cdot,\cdot\rangle_\Sigma$ denote the $L^2$ inner product on $\Sigma$, and 
\begin{align*}
&\ipbt{\cdot}{\cdot}:= \sum_{K \in \mathcal{T}_h} \langle\cdot,\cdot\rangle_{\partial K},\quad\langle\cdot,\cdot\rangle_{\partial\mathring{\mathcal{T}}_h}:=\sum_{K\in\mathcal{T}_h}\langle\cdot,\cdot\rangle_{\partial K\backslash\partial\Omega},\\
&\ipbf{\cdot}{\cdot}:= \sum_{F \in \mathcal{F}_h} \ipbtkf{\cdot}{\cdot},\quad
\langle\cdot,\cdot\rangle_{\mathcal{F}^i_h}:= \sum_{F\in\mathcal{F}^i_h} \ipbtkf{\cdot}{\cdot}.
\end{align*}
Given $F\in\mathcal{F}_h^{i}$, we fix a unit normal $\bm{n}_F$ to $F$, which points from one element $K^+$ to the other element $K^-$ on the other side. The jump and averaging operators are defined as
\begin{align*}
\llbracket \phi \rrbracket|_F&= \phi|_{K^+} - \phi|_{K^-}, \qquad \llbracket \phi \nbold \rrbracket|_F = \phi|_{K^+} \bm{n}_F-\phi|_{K^-} \bm{n}_F, \qquad \llcurve \phi \rrcurve|_F= \frac{1}{2} \left( \phi|_{K^+} + \phi|_{K^-} \right), \\
\llbracket \vbold \rrbracket|_F&= \vbold|_{K^+} - \vbold|_{K^-}, \qquad \llbracket \vbold \otimes \nbold \rrbracket|_F= \vbold|_{K^+} \otimes \bm{n}_F- \vbold|_{K^-} \otimes \bm{n}_F, \qquad  \llcurve \vbold \rrcurve|_F= \frac{1}{2} \left( \vbold|_{K^+} + \vbold|_{K^-}\right),
\end{align*}
where $\phi$ and $\vbold$ are arbitrary scalar- and vector-valued functions, respectively. For a boundary face $F\in\mathcal{F}_h^{\partial}$ which is contained in a single element $K\in\mathcal{T}_h$, we further assume that $\bm{n}_F$ is the outward pointing normal to $\partial\Omega$ and define
\begin{align*}
\llbracket \phi \rrbracket|_F&= \phi|_K, \qquad \llbracket \phi \nbold \rrbracket|_F= \phi|_K\bm{n}_F, \qquad \llcurve \phi \rrcurve|_F= \phi|_K, \\
\llbracket \vbold \rrbracket|_F&= \vbold|_K, \qquad \llbracket \vbold\otimes \nbold \rrbracket|_F= \vbold|_K\otimes \bm{n}_F, \qquad  \llcurve \vbold \rrcurve|_F= \vbold|_K.
\end{align*}
Throughout the rest of this paper, we use $\bm{n}\in\prod_{F\in\mathcal{F}_h}\mathbb{R}^d$ to denote the piecewise constant vector defined on the skeleton $\mathcal{F}_h$ such that $\bm{n}|_F:=\bm{n}_F$ for all $F\in\mathcal{F}_h.$ 
Let $\mathcal{P}_j \left(K \right)$ denote the space of polynomials of degree at most $j$ (with $j$ a non-negative integer). We shall make use of the following function spaces
\begin{align*}
&L^{2}_{0}(\Omega) = \left\{q \in {L}^{2}(\Omega):  \int_{\Omega}q=0\right\},\\
&[H^{m}(\mathcal{T}_h)]^d = \left\{\vbold  \in [{L}^{2}(\Omega)]^d: \vbold|_{K} \in[{H}^{m}(K)]^d,~\forall K \in \mathcal{T}_h \right\}, \\
&\bm{V}_h:= \left\{ \bm{v}_h\in[{L}^{2}(\Omega)]^d: \bm{v}_h |_{K} \in  [\mathcal{P}_{k+1} \left( K \right)]^{d}, \forall K \in \mathcal{T}_h \right\},\\[1.5ex]
&\bm{V}_h^{\text{div}}:= \left\{ \bm{v}_h \in H\left(\text{div}; \Omega \right):  \bm{v}_h |_{K} \in  \bm{\mathcal{Q}}_k \left( K \right),~\forall K \in \mathcal{T}_h~\text{and}~\bm{v}_h\cdot\bm{n}|_{\partial\Omega}=0\right\},\\[1.5ex]
&{Q}_h:= \left\{q_h\in L^{2}_{0}(\Omega): q_{h} |_{K} \in \mathcal{P}_k \left( K \right), \forall K \in \mathcal{T}_h \right\}.
\end{align*}
where $\bm{\mathcal{Q}}_k(K):=[\mathcal{P}_{k}(K)]^d+\mathcal{P}_{k}(K)\bm{x}$ or $\bm{\mathcal{Q}}_k(K):=[\mathcal{P}_{k+1}(K)]^d$ corresponding to the Raviart--Thomas (RT) \cite{RT1977} or Brezzi--Douglas--Marini (BDM) \cite{BDM1985} shape function space, respectively.

The rest of this paper is organized as follows. In Section \ref{euler}, we first present the unified framework and then prove the semi- and fully-discrete stability of the general scheme for the incompressible Euler equations in the general form, then we briefly extend our schemes to the incompressible Navier-Stokes equations. In Section \ref{conservation_property}, we analysis the conservation properties of the $H(\text{div})$-conforming and dG methods. In Section \ref{secNE}, we perform numerical experiments to confirm our findings in Section \ref{conservation_property} and test our dG schemes in both unsteady and steady situations, and compare the results with conventional schemes in the literature. Finally we conclude our paper in Section \ref{conclusion}.
\section{Incompressible Euler Equations} \label{euler}

Consider the incompressible Euler equations in the following form
\begin{equation}\label{incompressible_Euler}
    \begin{aligned}
\partial_t\ubold + \nabla \cdot \left( \ubold \otimes \ubold + P\mathbb{I} \right) - \theta\ubold\cdot\nabla\ubold^{T}= 0,  \qquad & \text{in} \quad \left(0, T\right]\times \Omega,\\
\nabla \cdot \ubold = 0, \qquad & \text{in} \quad \left(0, T\right]\times \Omega,\\  
\ubold\cdot\nbold=\bm{0},\qquad& \text{on} \quad \left(0, T\right] \times \partial\Omega,\\
\ubold(0,\xbold)=\ubold_0(\xbold),\qquad& \text{in} \quad \Omega, 
\end{aligned}
\end{equation}
where $P$ is defined as  
\begin{equation}\label{phat}
    P:=p+\frac{\theta}{2}\bm{u}\cdot\bm{u},
\end{equation}
with $\theta$ being an arbitrary constant. Note that $P$ unifies the kinematic pressure ($\theta=0$), Bernoulli function ($\theta=1.0$) and EMAC function ($\theta=-1.0$). Although the EMAC function is not a physical quantity, its corresponding formulation could be used to calculate the numerical velocity.

Let $(\cdot,\cdot)$ denote the $L^2$ inner product on $\Omega$ and $\nabla_h$ the broken gradient with respect to $\mathcal{T}_h$. For the form with $\theta=0$, we follow the derivation in \cite{xi2020} and obtain a semi-discrete scheme: Find unknowns  $\left(\bm{u}_h(t),P_h(t) \right)\in\bm{V}_h\times Q_h$ for each time $t\in(0,T]$ such that
\begin{subequations}\label{mixed}
    \begin{align}
&(\partial_t\ubold_h,\vbold_h)- (\ubold_h \otimes \ubold_h,\nabla_h \vbold_h)- (P_h,\nabla_h \cdot \vbold_h) + \ipbt{\widehat{\bm{\sigma}}_{h} \nbold}{\bm{v}_h} \\[1.5ex] 
\nonumber & -\frac{1}{2} (\left(\nabla_h \cdot \bm{u}_h \right) \ubold_h,\vbold_h) + \frac{1}{2}  \langle\bm{u}_h,\nbold \llcurve \ubold_h \cdot \vbold_h\rrcurve\rangle_{\partial\mathring{\mathcal{T}}_h}+d_h(\bm{u}_h,\bm{v}_h) =0,\\[1.5ex]
&(\nabla_h\cdot \ubold_h,q_h)-\iipbfb{\llbracket \ubold_h \rrbracket\cdot\nbold}{\llcurve q_h \rrcurve} = 0,
\end{align}
\end{subequations}
for all $\left(\bm{v}_h,q_h\right)\in\bm{V}_h\times Q_h$. 
The penalty term $d_h(\bm{u}_h,\bm{v}_h)$ is defined as
\begin{align}
d_h(\bm{u}_h,\bm{v}_h)=\gamma\bigg(\sum_{F\in\mathcal{F}_h}h_F^{-1}\langle\llbracket \ubold_h \rrbracket\cdot\nbold_F,\llbracket \vbold_h \rrbracket\cdot\nbold_F\rangle_F+(\nabla_h\cdot\ubold_h,\nabla_h\cdot\vbold_h)\bigg),\label{d_h}
\end{align}
for some constant $\gamma>0$, which is used to increase pressure robustness. The numerical flux is 
\begin{align*}
\widehat{\bm{\sigma}}_h &= \llcurve \ubold_h \rrcurve \otimes \llcurve \ubold_h \rrcurve + \llcurve P_h \rrcurve \mathbb{I}+\zeta\left| \llcurve\ubold_h\rrcurve \cdot \nbold\right| \llbracket \ubold_h \otimes \nbold \rrbracket,
\end{align*}
where $\zeta=\{\zeta_F\}_{F\in\mathcal{F}_h}$ are user-specified piecewise non-negative constants controlling the amount of numerical dissipation. Using element-wise integration by parts, the scheme \eqref{mixed} becomes
\begin{subequations}
\begin{align}
&(\partial_t\ubold_h,\vbold_h) +c^0_h \left(\ubold_h; \ubold_h, \vbold_h \right)- b_h \left( \vbold_h, P_h \right)+d_h(\ubold_h,\vbold_h)=0,\quad\forall\bm{v}_h\in\bm{V}_h,\\
& b_h \left(\ubold_h, q_h \right) = 0,\quad \forall q_h\in Q_h,
\end{align}
\end{subequations}
where 
\begin{align}
&c^0_{h} \left(\bm{\beta}_h; \vbold_h, \wbold_h \right) := (\bm{\beta}_h \cdot \nabla_h \vbold_h,\wbold_h)+ \frac{1}{2} (\left(\nabla_h \cdot \bm{\beta}_h \right) \vbold_h,\wbold_h)- \iipbf{ \left( \llcurve\bm{\beta}_h\rrcurve \cdot \nbold \right) \llbracket \vbold_h \rrbracket}{\llcurve \wbold_h \rrcurve}\nonumber\\
&\quad-\frac{1}{2}\iipbf{\llbracket \beta_h \rrbracket\cdot\nbold}{\llcurve \vbold_h\cdot\wbold_h \rrcurve}+ \ipbf{\zeta \left| \llcurve\bm{\beta}_h\rrcurve \cdot \bm{n} \right|\llbracket \vbold_h \rrbracket}{\llbracket \wbold_h \rrbracket},\label{ch}\\
&b_h(\bm{v}_h, q_h):= (\nabla_h \cdot \vbold_h,q_h)-\iipbfb{\llbracket \vbold_h \rrbracket\cdot\nbold}{\llcurve q_h \rrcurve}\label{bh}.
\end{align} 

However, the form $c_h^0$ is inconsistent when $\theta\neq0$.
In general, for $\theta\in\{0,1,-1\}$, our semi-discrete scheme for \eqref{incompressible_Euler} seeks $(\bm{u}_h(t),P_h(t))\in\bm{V}_h\times Q_h$ such that
\begin{subequations}\label{compact}
\begin{align}
&(\partial_t\ubold_h,\vbold_h) +{c}_h \left(\ubold_h; \ubold_h, \vbold_h \right)- b_h \left( \vbold_h, P_h \right)+d_h(\ubold_h,\vbold_h)=0,\quad\forall\bm{v}_h\in\bm{V}_h,\label{compact1}\\[1.5ex]
& b_h \left(\ubold_h, q_h \right) = 0,\quad \forall q_h\in Q_h.\label{compact2}
\end{align}
\end{subequations}
The term ${c}_h \left(\ubold_h; \ubold_h, \vbold_h \right)$ coincides with $c^0_h \left(\ubold_h; \ubold_h, \vbold_h \right)$ if $\theta=0$ and will be specified later if $\theta\neq0$. In particular, we want $c_h \left(\ubold_h; \ubold_h, \vbold_h \right)\approx c^0_h \left(\ubold_h; \ubold_h, \vbold_h \right)-\theta E_h,$ where $E_h$ approximates $(\ubold\cdot\nabla\ubold^{T},\vbold).$

A key observation is that $c_h^0(\bm{\beta}_h;\bm{v}_h,\bm{w}_h)$ in \eqref{ch} is an approximation to $(\bm{\beta}\cdot\nabla\vbold,\wbold)$. Therefore for the new term $\ubold\cdot\nabla\ubold^{T}$ in \eqref{incompressible_Euler}, its variational counterpart $(\ubold\cdot\nabla\ubold^{T},\vbold)=(\vbold\cdot\nabla\ubold,\ubold)$ \footnote{Here the property of matrix transposition is used.}  could be discretized as 
\begin{align*}
(\bm{u}\cdot\nabla\bm{u}^{T},\bm{v})\approx(\vbold_h \cdot \nabla_h \ubold_h,\ubold_h)+ \frac{1}{2} (\left(\nabla_h \cdot \vbold_h \right) \ubold_h,\ubold_h)- \iipbf{ \left( \llcurve\vbold_h\rrcurve \cdot \nbold \right) \llbracket \ubold_h \rrbracket}{\llcurve \ubold_h \rrcurve}\nonumber-\frac{1}{2}\iipbf{\llbracket \vbold_h \rrbracket\cdot\nbold}{\llcurve \ubold_h\cdot\ubold_h \rrcurve}.
\end{align*}
Here we discard the term $\ipbf{\zeta \left| \llcurve\vbold_h\rrcurve \cdot \bm{n} \right|\llbracket \ubold_h \rrbracket}{\llbracket \ubold_h \rrbracket}$ in $c_h^0(\bm{v}_h;\bm{u}_h,\bm{u}_h)$ as it lacks physical meaning.

\begin{itemize}
\item Scheme \textsf{dG1}

Based on the above analysis, we introduce
\begin{equation}\label{ch_rotational}
    \begin{aligned}
&c^1_{h} \left(\ubold_h; \ubold_h, \vbold_h \right):= (\ubold_h \cdot \nabla_h \ubold_h,\vbold_h)+ \frac{1}{2} (\left(\nabla_h \cdot \ubold_h \right) \ubold_h,\vbold_h)- \iipbf{ \left( \llcurve\ubold_h\rrcurve \cdot \nbold \right) \llbracket \ubold_h \rrbracket}{\llcurve \vbold_h \rrcurve}\\
&\quad-\frac{1}{2}\iipbf{\llbracket \ubold_h \rrbracket\cdot\nbold}{\llcurve \ubold_h\cdot\vbold_h \rrcurve}+ \ipbf{\zeta \left| \llcurve\ubold_h\rrcurve \cdot \bm{n} \right|\llbracket \ubold_h \rrbracket}{\llbracket \vbold_h \rrbracket}+\theta\iipbf{ \left( \llcurve\vbold_h\rrcurve \cdot \nbold \right) \llbracket \ubold_h \rrbracket}{\llcurve \ubold_h \rrcurve}\\
&\quad+\frac{\theta}{2}\iipbf{\llbracket \vbold_h \rrbracket\cdot\nbold}{\llcurve \ubold_h\cdot\ubold_h \rrcurve} -\theta(\vbold_h \cdot \nabla_h \ubold_h,\ubold_h)-\frac{\theta}{2} (\left(\nabla_h \cdot \vbold_h \right) \ubold_h,\ubold_h).
\end{aligned} 
\end{equation}
The corresponding scheme \eqref{compact} with $c_h=c_h^1$ is denoted as \textsf{dG1}.
The advantage is that the positivity of $c^{1}_{h}$ is closely related to the positive structure of $c^0_{h}$. Unfortunately,  $c^{1}_{h}$ is inconsistent due to the two terms $\frac{\theta}{2} (\left(\nabla_h \cdot \vbold_h \right) \ubold_h,\ubold_h)$ and $\frac{\theta}{2}\iipbf{\llbracket \vbold_h \rrbracket\cdot\nbold}{\llcurve \ubold_h\cdot\ubold_h \rrcurve}$. 
\begin{remark}
The inconsistent  \emph{\textsf{dG1}} scheme is the starting point of our framework and is conveniently derived from an existing discrete convective form without redoing tedious integration-by-parts.
Later we shall develop another dG method to enforce consistency.  Furthermore, derivations of the embedded $H(\emph{div})$- and $H^{1}$-conforming methods become transparent by imposing stronger  continuity on dG spaces.
\end{remark}

Let us define the kernel of $b_h$ as
\begin{align*}
\bm{Z}_{h}:=\{\bm{v}_h\in\bm{V}_h:b_h(\bm{v}_h, q_h) = 0,~\forall q_h\in Q_h\}.
\end{align*}
Then restricting  \eqref{compact1} with $c_h=c_h^1$ to $\bm{Z}_{h}$ yields
\begin{equation}\label{eqnZh}
(\partial_t\ubold_h,\vbold_h) +c^{1}_{h} \left(\ubold_h; \ubold_h, \vbold_h \right)+d_h(\ubold_h,\vbold_h)=0,\quad\forall\bm{v}_h\in\bm{Z}_{h}.
\end{equation}
Using Cauchy-Schwarz inequality, we have
\begin{align*}
&(\left(\nabla_h \cdot \vbold_h \right) \ubold_h,\ubold_h)\leq\|\nabla_h\cdot\vbold_h\|_{L^{2}(\Omega)}\|\ubold_h\cdot\ubold_h\|_{L^{2}(\Omega)},\\
&\iipbf{\llbracket \vbold_h \rrbracket\cdot\nbold}{\llcurve \ubold_h\cdot\ubold_h \rrcurve}\leq\sum_{F\in\mathcal{F}_h^{i}}\|\llbracket \vbold_h \rrbracket\cdot\nbold_F\|_{L^{2}(F)}\|\llcurve \ubold_h\cdot\ubold_h \rrcurve\|_{L^{2}(F)},\quad\forall \vbold_h\in\bm{Z}_{h}.
\end{align*}
It follows that one could choose sufficiently large penalizing parameter $\gamma$ in $d_h$ to control $\|\nabla_h\cdot\vbold_h\|_{L^{2}(\Omega)}$, $\|\llbracket \vbold_h \rrbracket\cdot\nbold_F\|_{L^{2}(F)}$, and in turn to minimize the inconsistent terms $(\left(\nabla_h \cdot \vbold_h \right) \ubold_h,\ubold_h)$,  $\iipbf{\llbracket \vbold_h \rrbracket\cdot\nbold}{\llcurve \ubold_h\cdot\ubold_h \rrcurve}$, which are variational crimes. Therefore, the consistency of \textsf{dG1} is weakly enforced. We will show through numerical experiments in Section \ref{secNE} that one may choose $\gamma$ properly to achieve high numerical accuracy.
We note that the inconsistent terms will have an effect on the approximation of $P$ as the penalization is enforced in $\bm{Z}_{h}$. However, in many physical problems involving incompressible flows, the velocity is usually the physical quantity of interest. Furthermore, $P$ in the EMAC case has no physical meaning, and $c^{1}_{h}$ is naturally consistent for the case of the kinematic pressure when $\theta=0$. Therefore, the only physically meaningful expression of $P$ affected by the inconsistency is the Bernoulli function $p+\frac{1}{2}\ubold\cdot\ubold$ corresponding to $\theta=1$. This observation motivates the following new class of dG methods.

\item Scheme \textsf{dG2}

To guarantee the consistency of \eqref{compact}, we discard the two inconsistent terms in \eqref{ch_rotational} and present an alternative convective form
\begin{equation} \label{ch_rotational_2}
    \begin{aligned}
c^{2}_{h} \left(\ubold_h; \ubold_h, \vbold_h \right)&:= (\ubold_h \cdot \nabla_h \ubold_h,\vbold_h)+ \frac{1-\theta}{2} (\left(\nabla_h \cdot \ubold_h \right) \ubold_h,\vbold_h)- \iipbf{ \left( \llcurve\ubold_h\rrcurve \cdot \nbold \right) \llbracket \ubold_h \rrbracket}{\llcurve \vbold_h \rrcurve}\\
&\quad-\frac{1-\theta}{2}\iipbf{\llbracket \ubold_h \rrbracket\cdot\nbold}{\llcurve \ubold_h\cdot\vbold_h \rrcurve}+ \ipbf{\zeta \left| \llcurve\ubold_h\rrcurve \cdot \bm{n} \right|\llbracket \ubold_h \rrbracket}{\llbracket \vbold_h \rrbracket}\\
&\quad+\theta\iipbf{ \left( \llcurve\vbold_h\rrcurve \cdot \nbold \right) \llbracket \ubold_h \rrbracket}{\llcurve \ubold_h \rrcurve}-\theta(\vbold_h \cdot \nabla_h \ubold_h,\ubold_h).
\end{aligned} 
\end{equation}
The corresponding scheme \eqref{compact} with $c_h=c_h^2$ is denoted by \textsf{dG2}. Direct calculation shows that \textsf{dG2} is consistent. Coefficients of the second and fourth terms in $c_h^2$ are modified for stability analysis, see Lemma \ref{positivech} for details.

In particular, $c^2_h$ in \eqref{ch_rotational_2} for the Bernoulli function ($\theta=1$) and EMAC function ($\theta=-1$) reduces to 
\begin{align}
\nonumber c^{R}_{h} \left(\ubold_h; \ubold_h, \vbold_h \right) &= (\ubold_h \cdot \nabla_h \ubold_h,\vbold_h)- \iipbf{ \left( \llcurve\ubold_h\rrcurve \cdot \nbold \right) \llbracket \ubold_h \rrbracket}{\llcurve \vbold_h \rrcurve}\nonumber\\
\nonumber& +\iipbf{ \left( \llcurve\vbold_h\rrcurve \cdot \nbold \right) \llbracket \ubold_h \rrbracket}{\llcurve \ubold_h \rrcurve}-(\vbold_h \cdot \nabla_h \ubold_h,\ubold_h)\\
\nonumber&+\ipbf{\zeta \left| \llcurve\ubold_h\rrcurve \cdot \bm{n} \right|\llbracket \ubold_h \rrbracket}{\llbracket \vbold_h \rrbracket},
\end{align} 
and
\begin{align}
\nonumber c^{E}_{h} \left(\ubold_h; \ubold_h, \vbold_h \right) &= (\ubold_h \cdot \nabla_h \ubold_h,\vbold_h)+  (\left(\nabla_h \cdot \ubold_h \right) \ubold_h,\vbold_h)- \iipbf{ \left( \llcurve\ubold_h\rrcurve \cdot \nbold \right) \llbracket \ubold_h \rrbracket}{\llcurve \vbold_h \rrcurve}\nonumber\\
\nonumber&\quad-\iipbf{\llbracket \ubold_h \rrbracket\cdot\nbold}{\llcurve \ubold_h\cdot\vbold_h \rrcurve}+ \ipbf{\zeta \left| \llcurve\ubold_h\rrcurve \cdot \bm{n} \right|\llbracket \ubold_h \rrbracket}{\llbracket \vbold_h \rrbracket}\\
\nonumber&\quad-\iipbf{ \left( \llcurve\vbold_h\rrcurve \cdot \nbold \right) \llbracket \ubold_h \rrbracket}{\llcurve \ubold_h \rrcurve}+(\vbold_h \cdot \nabla_h \ubold_h,\ubold_h),
\end{align} 
respectively.

\item Scheme \textsf{Hdiv}

Replacing $\bm{V}_h$ with $\bm{V}^{\text{div}}_h$ in \eqref{compact} and taking $c_h=c_h^1$ or $c_h^2$ yields an $H(\text{div})$ conforming numerical scheme (denoted as \textsf{Hdiv}) for \eqref{incompressible_Euler}. It is well-known that $\bm{V}^{\text{div}}_h\times Q_h$ is an inf-sup stable pair and  $\nabla\cdot\bm{V}^{\text{div}}_h=Q_h$. Hence the incompressibility  $\nabla\cdot\bm{u}_h=0$ holds pointwise via \eqref{compact2}.  Another essential feature of $\bm{V}^{\text{div}}_h$ is that $\llbracket\bm{v}_h\cdot\bm{n}\rrbracket=0$ on $\mathcal{F}_h$. Then the penalty term $d_h(\bm{u}_h,\bm{v}_h)$ of \textsf{Hdiv} vanishes for any $\bm{v}_h\in \bm{V}^{\text{div}}_h$. 
Let $\bm{Z}_h^{\text{div}}=\{\bm{v}_h\in\bm{V}_h^{\text{div}}: \nabla\cdot\bm{v}_h=0\}$. Restricting to $\bm{V}^{\text{div}}_h\subset\bm{V}_h$, for all $\bm{u}_h, \bm{v}_h\in \bm{Z}_h^{\text{div}},$ the convective form for \textsf{Hdiv} satisfies
\begin{equation}\label{hdiv_convective}
    \begin{aligned}
&c^1_{h} \left(\ubold_h; \ubold_h, \vbold_h \right)= c^2_{h} \left(\ubold_h; \ubold_h, \vbold_h \right)\\
&=(\ubold_h \cdot \nabla_h \ubold_h,\vbold_h)- \iipbf{ \left( \ubold_h \cdot \nbold \right) \llbracket \ubold_h \rrbracket}{\llcurve \vbold_h \rrcurve}+ \iipbf{\zeta \left| \ubold_h\cdot \bm{n} \right|\llbracket \ubold_h \rrbracket}{\llbracket \vbold_h \rrbracket},
\end{aligned} 
\end{equation}
where $\llbracket\bm{v}_h\cdot\bm{n}\rrbracket=0,$ $\nabla\cdot\bm{u}_h=0$, and 
$\iipbf{ \left( \llcurve\vbold_h\rrcurve \cdot \nbold \right) \llbracket \ubold_h \rrbracket}{\llcurve \ubold_h \rrcurve}=(\vbold_h \cdot \nabla_h \ubold_h,\ubold_h)$ are used.
Therefore, \textsf{dG1} and \textsf{dG2} recover the same numerical velocity when the velocity is $H(\text{div})$-conforming. They generalize the $H(\text{div})$ conforming method in \cite{chen2020h}, where the recovery of the same numerical velocity is discussed and shown experimentally under the setting of $H(\text{div})$-conforming with consistent formulation. 

In summary, the scheme \textsf{Hdiv} is to find $(\bm{u}_h(t),P_h(t))\in\bm{V}^{\text{div}}_h\times Q_h,$
\begin{subequations}\label{compact_euler}
\begin{align}
&(\partial_t\ubold_h,\vbold_h) +{c}_h \left(\ubold_h; \ubold_h, \vbold_h \right)- \left( \nabla\cdot\vbold_h, P_h \right)=0,\quad\forall\bm{v}_h\in\bm{V}^{\text{div}}_h,\label{euler_1}\\
& \left(\nabla\cdot\ubold_h, q_h \right) = 0,\quad \forall q_h\in Q_h,\label{euler_2}
\end{align}
\end{subequations}
which is a special case of \eqref{compact}.

\end{itemize}
So far, we obtain two new DG schemes \textsf{dG1} and \textsf{dG2}, where \textsf{dG2} is exactly consistent and \textsf{dG1} is weakly consistent by penalization. In addition, \textsf{dG1}  and \textsf{dG2} reduce to the H(div) conforming scheme \textsf{Hdiv} provided $\bm{V}_h$ is replaced with $\bm{V}_h^{\text{div}}$.

\begin{lemma}[Positivity of $c^{1}_{h}$ and $c^{2}_{h}$]\label{positivech}
Assume  $\zeta_F\geq 0.5|1-\theta|$ for all $F\in\mathcal{F}_{h}^{\partial}$ when $\vbold_h\in \bm{V}_h,$ or assume $\zeta_F$ be arbitrary non-negative number when $\vbold_h\in \bm{V}_h^{\emph{\text{div}}},$ then we have
\begin{align*}
    c^{1}_{h}\left(\vbold_h; \vbold_h, \vbold_h \right)=c^{2}_{h}\left(\vbold_h; \vbold_h, \vbold_h \right)\geq 0.
\end{align*}
\end{lemma}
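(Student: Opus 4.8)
The plan is to first collapse both trilinear forms on the diagonal to a single common expression, and then rewrite that expression as a boundary integral that pairs with the dissipation term.

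First I would substitute $\ubold_h=\vbold_h$ into \eqref{ch_rotational} and \eqref{ch_rotational_2} and gather like terms. In $c^1_h$ the terms $(\vbold_h\cdot\nabla_h\vbold_h,\vbold_h)$ and $-\theta(\vbold_h\cdot\nabla_h\vbold_h,\vbold_h)$ combine to $(1-\theta)(\vbold_h\cdot\nabla_h\vbold_h,\vbold_h)$, the two divergence terms combine to $\tfrac{1-\theta}{2}((\nabla_h\cdot\vbold_h)\vbold_h,\vbold_h)$, and the two interior-face couplings acquire the factors $(\theta-1)$ and $\tfrac{\theta-1}{2}$; the same reduction holds verbatim for $c^2_h$. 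Hence both reduce to
\[
c_h(\vbold_h;\vbold_h,\vbold_h)=(1-\theta)\,A(\vbold_h)+\ipbf{\zeta\left|\llcurve\vbold_h\rrcurve\cdot\bm{n}\right|\llbracket\vbold_h\rrbracket}{\llbracket\vbold_h\rrbracket},
\]
with $A(\vbold_h):=(\vbold_h\cdot\nabla_h\vbold_h,\vbold_h)+\tfrac12((\nabla_h\cdot\vbold_h)\vbold_h,\vbold_h)-\iipbf{(\llcurve\vbold_h\rrcurve\cdot\bm{n})\llbracket\vbold_h\rrbracket}{\llcurve\vbold_h\rrcurve}-\tfrac12\iipbf{\llbracket\vbold_h\rrbracket\cdot\bm{n}}{\llcurve\vbold_h\cdot\vbold_h\rrcurve}$. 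This proves the equality and reduces positivity to controlling the right-hand side above.

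Next I would evaluate $A(\vbold_h)$ by element-wise integration by parts. Since $(\vbold_h\cdot\nabla\vbold_h)\cdot\vbold_h=\vbold_h\cdot\nabla(\tfrac12|\vbold_h|^2)$ on each $K$, the divergence theorem gives $\int_K(\vbold_h\cdot\nabla\vbold_h)\cdot\vbold_h+\tfrac12\int_K(\nabla\cdot\vbold_h)|\vbold_h|^2=\tfrac12\int_{\partial K}|\vbold_h|^2(\vbold_h\cdot\bm{n}_K)$, so the two volume terms sum to $\ipbt{\tfrac12|\vbold_h|^2}{\vbold_h\cdot\bm{n}}$. I then split this $\partial\mathcal{T}_h$-sum into interior and boundary faces: using $\llbracket ab\rrbracket=\llcurve a\rrcurve\llbracket b\rrbracket+\llbracket a\rrbracket\llcurve b\rrcurve$ together with $\llbracket\vbold_h\cdot\vbold_h\rrbracket=2\llcurve\vbold_h\rrcurve\cdot\llbracket\vbold_h\rrbracket$, the interior-face part equals precisely $\iipbf{(\llcurve\vbold_h\rrcurve\cdot\bm{n})\llbracket\vbold_h\rrbracket}{\llcurve\vbold_h\rrcurve}+\tfrac12\iipbf{\llbracket\vbold_h\rrbracket\cdot\bm{n}}{\llcurve\vbold_h\cdot\vbold_h\rrcurve}$, which cancels the two interior-face terms subtracted in $A(\vbold_h)$. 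Thus $A(\vbold_h)$ collapses to the pure boundary contribution $A(\vbold_h)=\tfrac12\sum_{F\in\mathcal{F}_h^{\partial}}\int_F|\vbold_h|^2(\vbold_h\cdot\bm{n}_F)$.

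Finally I would split into the two cases. If $\vbold_h\in\bm{V}_h^{\text{div}}$ then $\vbold_h\cdot\bm{n}|_{\partial\Omega}=0$, so $A(\vbold_h)=0$ and $c_h(\vbold_h;\vbold_h,\vbold_h)$ is exactly the dissipation term, which is non-negative for any $\zeta_F\ge0$. If $\vbold_h\in\bm{V}_h$, I would separate the dissipation term into its interior-face part (manifestly non-negative) and its boundary-face part; adding the latter to $(1-\theta)A(\vbold_h)$ gives $\sum_{F\in\mathcal{F}_h^{\partial}}\int_F|\vbold_h|^2\big(\tfrac{1-\theta}{2}(\vbold_h\cdot\bm{n}_F)+\zeta_F|\vbold_h\cdot\bm{n}_F|\big)$. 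With $s:=\vbold_h\cdot\bm{n}_F$, the parenthesis is $\tfrac{1-\theta}{2}s+\zeta_F|s|$, which is non-negative for every real $s$ if and only if $\zeta_F\ge\tfrac12|1-\theta|$ — exactly the stated hypothesis — so $c_h(\vbold_h;\vbold_h,\vbold_h)\ge0$. The main obstacle is the middle step: converting $\ipbt{\tfrac12|\vbold_h|^2}{\vbold_h\cdot\bm{n}}$ into signed face integrals under the orientation convention (outward normals $\pm\bm{n}_F$ of $K^\pm$ on an interior face) and verifying that the interior-face output matches the subtracted interior-face terms term-by-term, so that the exact cancellation occurs; once that is secured, positivity reduces to the elementary one-variable inequality $\tfrac{1-\theta}{2}s+\zeta_F|s|\ge0$.
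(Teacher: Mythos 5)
Your proof is correct and follows essentially the same route as the paper: both reduce the diagonal of $c^1_h$ and $c^2_h$ to $(1-\theta)$ times the $\zeta$-free part of $c^0_h(\vbold_h;\vbold_h,\vbold_h)$ plus the full dissipation term, use the element-wise identity $(\vbold_h\cdot\nabla_h\vbold_h,\vbold_h)+\tfrac12((\nabla_h\cdot\vbold_h)\vbold_h,\vbold_h)=\tfrac12\ipbt{\vbold_h}{\vbold_h(\vbold_h\cdot\nbold)}$ together with the face-wise decomposition to cancel the interior-face terms, and conclude from the sign of $\tfrac{1-\theta}{2}s+\zeta_F|s|$ on boundary faces (or the boundary condition in the $H(\text{div})$ case). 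Your verification of the interior-face cancellation via $\llbracket\vbold_h\cdot\vbold_h\rrbracket=2\llcurve\vbold_h\rrcurve\cdot\llbracket\vbold_h\rrbracket$ is exactly the content of the paper's second displayed identity.
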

\begin{proof}
Direct calculation confirms the following identities
\begin{align*}
&(\bm{\beta}_h \cdot \nabla_h \vbold_h,\vbold_h)+ \frac{1}{2} (\left(\nabla_h \cdot \bm{\beta}_h \right) \vbold_h,\vbold_h)
=\frac{1}{2}  \ipbt{\vbold_h}{\vbold_h \left(\bm{\beta}_h \cdot \nbold \right)},\\
&\frac{1}{2}  \ipbt{\vbold_h}{\vbold_h \left(\bm{\beta}_h \cdot \nbold \right)}= \frac{1}{2} \iipbf{\llbracket \bm{\beta}_h \rrbracket}{\nbold \llcurve \vbold_h \cdot \vbold_h \rrcurve} + \iipbf{ \left(\llcurve \bm{\beta}_h \rrcurve \cdot \nbold \right) \llbracket \vbold_h \rrbracket}{\llcurve \vbold_h \rrcurve}+\frac{1}{2}\ipbtobf{\bm{\beta}_h\cdot\nbold}{\vbold_h \cdot \vbold_h },
\end{align*}
It follows that
\begin{align*}
    c^0_{h} \left(\bm{\beta}_h; \vbold_h, \vbold_h \right) =  \zeta \iipbf{\left| \llcurve\bm{\beta}_h\rrcurve \cdot \nbold \right|\llbracket \vbold_h \rrbracket}{\llbracket \vbold_h \rrbracket}+ \ipbtobf{\zeta|\bm{\beta}_h\cdot\nbold|+0.5\bm{\beta}_h\cdot\nbold}{\vbold_h \cdot \vbold_h }.
\end{align*}
Using  \eqref{ch_rotational}, \eqref{ch_rotational_2}, and the previous equation, we have
\begin{equation}\label{chuuu}
    \begin{aligned}
&c^{1}_{h} \left(\bm{v}_h; \bm{v}_h, \bm{v}_h \right)=c^{2}_{h} \left(\bm{v}_h; \bm{v}_h, \bm{v}_h \right) =(1-\theta)\bigg( (\bm{v}_h \cdot \nabla_h \bm{v}_h,\bm{v}_h)+ \frac{1}{2} (\left(\nabla_h \cdot \bm{v}_h \right) \bm{v}_h,\bm{v}_h)\\
&-\iipbf{ \left( \llcurve\bm{v}_h\rrcurve \cdot \nbold \right) \llbracket \bm{v}_h \rrbracket}{\llcurve \bm{v}_h \rrcurve}-\frac{1}{2}\iipbf{\llbracket \bm{v}_h \rrbracket\cdot\nbold}{\llcurve \bm{v}_h\cdot\bm{v}_h \rrcurve}+ \ipbf{\zeta \left| \llcurve\bm{v}_h\rrcurve \cdot \bm{n} \right|\llbracket \bm{v}_h \rrbracket}{\llbracket \bm{v}_h \rrbracket}\bigg)\\
&+\theta\ipbf{\zeta \left| \llcurve\bm{v}_h\rrcurve \cdot \bm{n} \right|\llbracket\bm{v}_h \rrbracket}{\llbracket\bm{v}_h \rrbracket}\\
&=\zeta \iipbf{\left| \llcurve\bm{v}_h\rrcurve \cdot\bm{v} \right|\llbracket \bm{v}_h \rrbracket}{\llbracket \bm{v}_h \rrbracket}+ \ipbtobf{\zeta|\bm{v}_h\cdot\nbold|+0.5(1-\theta)\bm{v}_h\cdot\nbold}{\bm{v}_h \cdot \bm{v}_h }.
\end{aligned} 
\end{equation}

Finally, we conclude the proof by using $\zeta_{F}\geq 0.5|1-\theta|$ for $F\in\mathcal{F}_{h}^{\partial}$ when $\vbold_h\in \bm{V}_h,$ or by using the boundary condition of $\bm{v}_h$ when it belongs to $\bm{V}_h^{\text{div}}$.
\end{proof}\qed

Now we are in a position to present the following semi-discrete stability.
\begin{theorem}[Semi-discrete Stability Estimate]\label{stability_theorem} Let the assumption in Lemma \ref{positivech} hold. Then for all $0\leq t\leq T$, Schemes \textsf{dG1}, \textsf{dG2}, and \textsf{Hdiv} satisfy
\begin{align*}
      \| \ubold_h(t) \|_{L^2 (\Omega)} \leq \left\| \ubold_h \left(0 \right) \right\|_{L^2 (\Omega)}.
\end{align*}
\end{theorem}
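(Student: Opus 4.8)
The plan is to use the standard discrete energy method: test the momentum equation against the numerical velocity itself and exploit the positivity established in Lemma \ref{positivech}. First I would take $\vbold_h = \ubold_h$ in the momentum equation \eqref{compact1} and $q_h = P_h$ in the incompressibility equation \eqref{compact2}. The second equation immediately gives $b_h(\ubold_h, P_h) = 0$, so the pressure-coupling term $-b_h(\vbold_h, P_h)$ drops out of the tested momentum equation, leaving
\begin{equation*}
(\partial_t \ubold_h, \ubold_h) + c_h(\ubold_h; \ubold_h, \ubold_h) + d_h(\ubold_h, \ubold_h) = 0.
\end{equation*}

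Next I would identify the sign of each surviving term. The time-derivative term is $(\partial_t \ubold_h, \ubold_h) = \tfrac{1}{2} \tfrac{d}{dt} \|\ubold_h\|_{\Lnn}^2$. Since $c_h$ equals $c_h^1$ or $c_h^2$ across the three schemes, Lemma \ref{positivech} yields $c_h(\ubold_h; \ubold_h, \ubold_h) \geq 0$, using the hypothesis on $\zeta_F$ for \textsf{dG1} and \textsf{dG2}, or the normal boundary condition for \textsf{Hdiv}. For the penalty term, the definition \eqref{d_h} shows $d_h(\ubold_h, \ubold_h)$ is a nonnegative sum of squared seminorms because $\gamma > 0$; for \textsf{Hdiv} it vanishes identically on $\bm{V}_h^{\text{div}}$ but remains nonnegative. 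Hence both $c_h(\ubold_h; \ubold_h, \ubold_h) \geq 0$ and $d_h(\ubold_h, \ubold_h) \geq 0$.

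Combining these observations would give the differential inequality
\begin{equation*}
\tfrac{1}{2} \tfrac{d}{dt} \|\ubold_h\|_{\Lnn}^2 = -c_h(\ubold_h; \ubold_h, \ubold_h) - d_h(\ubold_h, \ubold_h) \leq 0,
\end{equation*}
so the discrete energy is nonincreasing. Integrating in time from $0$ to $t$ then yields $\|\ubold_h(t)\|_{\Lnn}^2 \leq \|\ubold_h(0)\|_{\Lnn}^2$, and taking square roots gives the claimed bound. Because the argument treats all three schemes uniformly through Lemma \ref{positivech} and the sign of $d_h$, no scheme-specific work should be needed beyond checking the hypotheses.

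I do not expect a genuine obstacle here: the entire difficulty of the estimate has been front-loaded into Lemma \ref{positivech}, whose proof reduces the cubic form $c_h(\vbold_h; \vbold_h, \vbold_h)$ to manifestly nonnegative boundary and dissipation contributions. The only points requiring mild care are verifying that the pressure term genuinely cancels (which relies on testing \eqref{compact2} with exactly $q_h = P_h$) and confirming that the hypotheses of Lemma \ref{positivech} are inherited in each setting, in particular that the $\bm{V}_h^{\text{div}}$ boundary condition $\ubold_h \cdot \nbold|_{\partial\Omega} = 0$ supplies the correct sign on the boundary integral for \textsf{Hdiv}.
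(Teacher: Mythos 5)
Your proof is correct and follows essentially the same route as the paper: test with $\vbold_h=\ubold_h$ and $q_h=P_h$ to cancel the pressure term, invoke Lemma \ref{positivech} and the nonnegativity of $d_h$ to obtain $\frac{d}{dt}\|\ubold_h\|_{L^2(\Omega)}^2\leq 0$, and integrate in time. No substantive difference from the paper's argument.
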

\begin{proof}
Taking $\bm{v}_h = \bm{u}_h$ in \eqref{compact1} and $q_h = P_h$ in \eqref{compact2}, we have
\begin{align*}
\frac{1}{2} \frac{d}{dt} \left\| \bm{u}_h \right\|_{L^2 (\Omega)}^2 + c_h \left(\ubold_h; \ubold_h, \ubold_h \right)+d_h(\ubold_h,\ubold_h)=0.  
\end{align*}
It then follows from the positivity of Lemma \ref{positivech} and $d_h(\bm{u}_h,\bm{u}_h)$ that
\begin{align*}
   \| \bm{u}_h\|_{L^2(\Omega)}\|\bm{u}_h(t)\|^\prime_{L^2(\Omega)} \leq 0,
\end{align*}
which implies
$\|\bm{u}_h(t)\|^\prime_{L^2(\Omega)} \leq 0.$
Integrating this inequality completes the proof.
\end{proof}\qed

We use the Crank-Nicolson scheme to discretize the time direction. Let the time interval $[0,T]$ be partitioned into $0=t_0<t_1<\cdots<t_{N-1}<t_N=T$. Let $\tau_n:=t_{n+1}-t_{n}$ and 
\begin{equation*}
    \delta_t\bm{u}_h^n:=\frac{\bm{u}_h^{n+1}-\bm{u}_h^n}{\tau_n},\quad\bm{u}_h^{n+\frac{1}{2}}:=\frac{\bm{u}_h^{n+1}+\bm{u}_h^n}{2}.
\end{equation*}
Let $\bm{u}_h^0\in\bm{V}_h$ be a suitable interpolation of $\bm{u}_0=\bm{u}(0)$.
The fully discrete scheme for \eqref{incompressible_Euler} is to find $\{(\bm{u}_h^{n+1},P_h^{n+\frac{1}{2}})\}_{n=0}^{N-1}\subset\bm{V}_h\times Q_h,$ such that for all $n,$
\begin{subequations}\label{CrankNicolson}
\begin{align} &\left(\delta_t\bm{u}^n_h,\vbold_h\right) + c_h\left(\bm{u}_h^{n+\frac{1}{2}};\bm{u}_h^{n+\frac{1}{2}},\bm{v}_h \right) - b_h \left( \bm{v}_h, P_h^{n+\frac{1}{2}}\right) + d_h(\ubold_h^{n+\frac{1}{2}},\vbold_h)=0,\quad\forall\bm{v}_h\in\bm{V}_h,\label{fulldis_1}\\
& b_h \left(\bm{u}^{n+\frac{1}{2}}_h, q_h \right) = 0,\quad\forall q_h\in Q_h,\label{fulldis_2}
\end{align}
\end{subequations}
where $c_h=c_h^1$ or $c_h^2.$ When $\bm{V}_h$ is replaced by $\bm{V}^{\text{div}}_h$,  \eqref{CrankNicolson} is  the fully discrete \textsf{Hdiv} scheme. In \eqref{CrankNicolson}, $\bm{u}_h^n$ and $P_h^{n+\frac{1}{2}}$ approximate $\bm{u}_h(t_n)$ and $P_h(t_n+\frac{1}{2}\tau_n)$, respectively. The next theorem confirms the stability of \eqref{CrankNicolson}.
\begin{theorem}[Fully Discrete Energy Estimate]\label{fullystability}Let the assumptions in Theorem \ref{stability_theorem} hold. We have\label{fully_stability}
\begin{align*}
   \|\ubold_h^{n+1}\|_{L^{2}(\Omega)}\leq\|\ubold_h^{n}\|_{L^{2}(\Omega)}\quad\text{ for }0\leq n\leq N-1.
\end{align*}
\end{theorem}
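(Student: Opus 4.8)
The plan is to replay the semi-discrete argument of Theorem \ref{stability_theorem} at the discrete level, testing the fully discrete system against its own midpoint state. Concretely, I would take $\vbold_h=\ubold_h^{n+\frac{1}{2}}$ in \eqref{fulldis_1} and $q_h=P_h^{n+\frac{1}{2}}$ in \eqref{fulldis_2}, so that the same three ingredients as before -- a discrete energy identity for the time term, cancellation of the pressure coupling, and positivity of $c_h$ and $d_h$ -- reassemble into a monotone energy inequality.

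First I would dispose of the time-derivative term using the standard Crank--Nicolson telescoping identity. Since $\ubold_h^{n+\frac{1}{2}}=\tfrac12(\ubold_h^{n+1}+\ubold_h^n)$ and $\delta_t\ubold_h^n=\tau_n^{-1}(\ubold_h^{n+1}-\ubold_h^n)$, the algebraic relation $(a-b,a+b)=\|a\|^2-\|b\|^2$ yields
\begin{align*}
\left(\delta_t\ubold_h^n,\ubold_h^{n+\frac{1}{2}}\right)=\frac{1}{2\tau_n}\left(\|\ubold_h^{n+1}\|_{L^2(\Omega)}^2-\|\ubold_h^n\|_{L^2(\Omega)}^2\right).
\end{align*}
Next the pressure term vanishes: choosing $q_h=P_h^{n+\frac{1}{2}}$ in the discrete divergence constraint \eqref{fulldis_2} gives $b_h(\ubold_h^{n+\frac{1}{2}},P_h^{n+\frac{1}{2}})=0$, so the coupling term $b_h(\vbold_h,P_h^{n+\frac{1}{2}})$ in \eqref{fulldis_1} disappears once $\vbold_h=\ubold_h^{n+\frac{1}{2}}$.

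The key structural point -- and the only place where the Crank--Nicolson design is used -- is that the convective form is evaluated with all three slots at the single midpoint state $\ubold_h^{n+\frac{1}{2}}$. Hence Lemma \ref{positivech} applies verbatim with $\vbold_h=\ubold_h^{n+\frac{1}{2}}$, giving $c_h(\ubold_h^{n+\frac{1}{2}};\ubold_h^{n+\frac{1}{2}},\ubold_h^{n+\frac{1}{2}})\geq 0$ for $c_h=c_h^1$ or $c_h^2$ under the stated assumption on $\zeta_F$. Combined with $d_h(\ubold_h^{n+\frac{1}{2}},\ubold_h^{n+\frac{1}{2}})\geq 0$, which is immediate from \eqref{d_h} since $\gamma>0$ and every contribution is a square, the tested equation collapses to
\begin{align*}
\frac{1}{2\tau_n}\left(\|\ubold_h^{n+1}\|_{L^2(\Omega)}^2-\|\ubold_h^n\|_{L^2(\Omega)}^2\right)\leq 0,
\end{align*}
and since $\tau_n>0$, rearranging and taking square roots finishes the proof.

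I expect no genuine obstacle here: the substantive work is already packaged in Lemma \ref{positivech}, and the discrete energy identity for the time term is elementary. The single thing one must verify is that the nonlinear form's positivity survives time discretization, which it does precisely because Crank--Nicolson freezes the transport velocity and both function arguments at the common midpoint $\ubold_h^{n+\frac{1}{2}}$; had the scheme mismatched the time levels in the trilinear form, Lemma \ref{positivech} would no longer apply and unconditional stability could be lost.
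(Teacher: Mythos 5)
Your proposal is correct and follows essentially the same route as the paper: test with $\vbold_h=\ubold_h^{n+\frac{1}{2}}$ and $q_h=P_h^{n+\frac{1}{2}}$, use the Crank--Nicolson identity $\left(\delta_t\ubold_h^n,\ubold_h^{n+\frac{1}{2}}\right)=\frac{1}{2\tau_n}\left(\|\ubold_h^{n+1}\|_{L^2(\Omega)}^2-\|\ubold_h^n\|_{L^2(\Omega)}^2\right)$, cancel the pressure term via the discrete divergence constraint, and invoke the positivity of $c_h$ and $d_h$ at the midpoint state. Your closing remark about why the all-midpoint evaluation of the trilinear form is what lets Lemma \ref{positivech} apply verbatim is a correct and worthwhile observation that the paper leaves implicit.
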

\begin{proof}
Let $\vbold_h=\bm{u}_h^{n+\frac{1}{2}}$ and $q_h=P_h^{n+\frac{1}{2}}$.  Add \eqref{fulldis_1} and \eqref{fulldis_2} we obtain
\begin{align} \label{conservation_equation}
\frac{1}{2\tau_n}\big(\|\ubold_h^{n+1}\|_{L^{2}(\Omega)}^{2}-\|\ubold_h^{n}\|_{L^{2}(\Omega)}^{2}\big)+c_h(\bm{u}_h^{n+\frac{1}{2}};\bm{u}_h^{n+\frac{1}{2}},\bm{u}_h^{n+\frac{1}{2}})+d_h(\ubold_h^{n+\frac{1}{2}},\ubold_h^{n+\frac{1}{2}})=0.
\end{align}
Using the positivity of $c_h$, $d_h$ and \eqref{conservation_equation}, we have 
\begin{align*} 
\|\ubold_h^{n+1}\|_{L^{2}(\Omega)}^{2}-\|\ubold_h^{n}\|_{L^{2}(\Omega)}^{2}\leq 0.
\end{align*}
The proof is complete.
\end{proof}\qed

The stability of $P_h$ is guaranteed by a discrete inf-sup condition on $b_h$, see, e.g.,  \cite{akbas2018analogue}.
\begin{remark} \label{NA}
Consider the incompressible Navier-Stokes equations 
\begin{align*}
\partial_t\ubold + \nabla \cdot \left( \ubold \otimes \ubold + P\mathbb{I} \right) - \theta\ubold\cdot\nabla\ubold^{T}-\nu\nabla \cdot \bm{\tau}(\bm{u})  = 0,  \qquad & \text{in} \quad \left(0, T\right]\times \Omega,\\
\nabla \cdot \ubold = 0, \qquad & \text{in} \quad \left(0, T\right]\times \Omega,\\  
\ubold=\bm{0},\qquad& \text{on} \quad \left(0, T\right] \times \partial\Omega,\\
\ubold(0,\xbold)=\ubold_0(\xbold),\qquad& \text{in} \quad \Omega,
\end{align*}
where $\bm{\tau}(\bm{u})=\nabla\bm{u},\text{ or }\nabla\bm{u}+ \nabla\bm{u}^T,\text{ or }\nabla\bm{u}+ (\nabla\bm{u})^T - \frac{2}{3} \left(\nabla\cdot \bm{u}\right) \mathbb{I}.$ To discretize the term $\nabla\cdot\bm{\tau}(\bm{u}),$
we introduce the following viscous bilinear form 
\begin{equation*}
a_h(\vbold_h, \wbold_h)=( \bm{\tau}_h(\bm{v}_h),\nabla_h \wbold_h)   -\ipbf{\llbracket \vbold_h \rrbracket}{\llcurve  \bm{\tau}_h(\bm{w}_h) \rrcurve \bm{n}}  
-\ipbf{\llbracket \wbold_h \rrbracket}{\llcurve  \bm{\tau}_h(\bm{v}_h) \rrcurve \bm{n}} 
+\ipbf{\eta h^{-1} \llbracket \vbold_h \rrbracket}{\llbracket \wbold_h \rrbracket},
\end{equation*}
where $\eta>0$ is a user-specified parameter, and $\bm{\tau}_h(\bm{u}_h)=\nabla_h\bm{u}_h,\text{ or } \nabla_h\bm{u}_h+ \nabla_h\bm{u}_h^T,\text{ or }\nabla_h\bm{u}_h+ (\nabla_h\bm{u}_h)^T - \frac{2}{3} \left(\nabla_h\cdot \bm{u}_h\right) \mathbb{I}.$ The bilinear form $a_h$ is standard in the context of interior penalty dG methods, see, e.g., \cite{BS2008}. Now it is straightforward to extend the scheme \eqref{CrankNicolson} to the Navier-Stokes equations: Find $(\bm{u}_h^n,P_h^n)\in\bm{V}_h\times Q_h$ such that
\begin{subequations}\label{compact_NS}
\begin{align}
&\left(\delta_t\bm{u}^n_h,\vbold_h\right) + c_h\left(\bm{u}_h^{n+\frac{1}{2}};\bm{u}_h^{n+\frac{1}{2}},\bm{v}_h \right) - b_h \left( \bm{v}_h, P_h^{n+\frac{1}{2}}\right)\nonumber\\
&\qquad+\nu a_h(\ubold_h^{n+\frac{1}{2}}, \vbold_h^{{n+\frac{1}{2}}})+ d_h(\ubold_h^{n+\frac{1}{2}},\vbold_h)=0,\quad\forall\bm{v}_h\in\bm{V}_h,\\
& b_h \left(\bm{u}^{n+\frac{1}{2}}_h, q_h \right) = 0,\quad\forall q_h\in Q_h.
\end{align}
\end{subequations}
The corresponding stability result is an direct extension of our previous analysis for the incompressible Euler equations by observing that $a_h$ is coercive w.r.t.~a mesh dependent norm, see, e.g., \cite{chen2020versatile,Ern2012} for a detailed discussion.
\end{remark}
\begin{remark}
Error estimates of $H(\emph{div})$-conforming  and dG methods for the incompressible Euler equations can be found in \cite{chen2020h,guzman2016h,natale2018variational}. For incompressible Navier-Stokes equations, interested readers are referred to \cite{layton2008introduction,schroeder2018towards}.
It is worth noticing that error estimation of $H(\emph{div})$-conforming methods in \cite{layton2008introduction} assumes the existence of an $L^{\infty}$-bounded  Stokes projection, which 
is generally hard to prove.
On the other hand, the error estimates of dG methods for Euler equations in \cite{guzman2016h} reply on a postprocessed $H(\emph{div})$-conforming velocity. Without normal continuity of the velocity and post-processing procedures, the error analysis of dG methods in this paper is expected to be even more difficult. We refer to \cite{schroeder2018towards} for interesting discussions on some open problems about error estimates for incompressible Navier-Stokes equations. 
\end{remark}
\section{Conservation of Physical Quantities}\label{conservation_property}
In this section, we discuss the conservation properties of the proposed scheme \eqref{CrankNicolson}. In particular we are interested in the conservation of discrete energy, linear and angular momentum of \textsf{dG1}, \textsf{dG2}, and \textsf{Hdiv}.

\begin{theorem}[Energy Conservation]\label{disenergy} Let the assumption in Theorem \ref{fully_stability} holds. Scheme \eqref{CrankNicolson} with $c_h=c_h^1$ or $c_h=c_h^2$ conserves the total energy if and only if  $\theta=1$, $\gamma=0$ and $\zeta=0$ on $\mathcal{F}_h$. Scheme \eqref{CrankNicolson} with $\bm{V}_h=\bm{V}^{\emph{\text{div}}}_h$ conserves the total energy if and only if $\zeta=0$ on $\mathcal{F}_h$.  \label{energy_conservation_DG}
\end{theorem}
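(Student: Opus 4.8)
The plan is to read energy conservation directly off the discrete energy identity. Summing \eqref{fulldis_1} tested with $\vbold_h=\ubold_h^{n+\frac{1}{2}}$ and \eqref{fulldis_2} tested with $q_h=P_h^{n+\frac{1}{2}}$ (exactly as in the proof of Theorem \ref{fully_stability}) gives \eqref{conservation_equation}, which I would rearrange as
\[
\tfrac{1}{2\tau_n}\big(\|\ubold_h^{n+1}\|_{L^2(\Omega)}^2-\|\ubold_h^{n}\|_{L^2(\Omega)}^2\big)=-\big(c_h(\ubold_h^{n+\frac{1}{2}};\ubold_h^{n+\frac{1}{2}},\ubold_h^{n+\frac{1}{2}})+d_h(\ubold_h^{n+\frac{1}{2}},\ubold_h^{n+\frac{1}{2}})\big).
\]
Thus the scheme conserves energy precisely when the quantity $c_h(\vbold_h;\vbold_h,\vbold_h)+d_h(\vbold_h,\vbold_h)$ vanishes at every admissible midpoint $\vbold_h=\ubold_h^{n+\frac{1}{2}}$, i.e.\ (reading ``energy-conserving'' at the level of the parameters, over arbitrary data) for all $\vbold_h\in\bm{Z}_h$ in the dG case and all $\vbold_h\in\bm{Z}_h^{\text{div}}$ in the \textsf{Hdiv} case. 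I would then substitute the closed form of the diagonal convective term from \eqref{chuuu}, namely $c_h(\vbold_h;\vbold_h,\vbold_h)=\zeta\iipbf{|\llcurve\vbold_h\rrcurve\cdot\nbold|\llbracket\vbold_h\rrbracket}{\llbracket\vbold_h\rrbracket}+\ipbtobf{\zeta|\vbold_h\cdot\nbold|+0.5(1-\theta)\vbold_h\cdot\nbold}{\vbold_h\cdot\vbold_h}$, together with the manifestly nonnegative expression for $d_h$ in \eqref{d_h}. Under the hypothesis of Lemma \ref{positivech} every summand here is a nonnegative face contribution, so the total is a sum of nonnegative terms; this structural fact drives both directions.

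For sufficiency I would simply evaluate these contributions. In the dG case, $\gamma=0$ kills $d_h$, $\zeta=0$ on $\mathcal{F}_h$ kills both the interior term and the $\zeta|\vbold_h\cdot\nbold|$ part of the boundary term, and $\theta=1$ makes the coefficient $0.5(1-\theta)$ vanish, so $c_h+d_h\equiv0$. In the \textsf{Hdiv} case I would use the two structural facts recalled before \eqref{hdiv_convective}: for $\vbold_h\in\bm{V}_h^{\text{div}}$ the normal jumps vanish and $\nabla\cdot\vbold_h=0$ pointwise, so $d_h\equiv0$ automatically, while $\vbold_h\cdot\nbold=0$ on $\partial\Omega$ makes the entire boundary term in \eqref{chuuu} disappear for \emph{any} $\theta$. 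What remains is only $\zeta\iipbf{|\vbold_h\cdot\nbold|\llbracket\vbold_h\rrbracket}{\llbracket\vbold_h\rrbracket}$ over interior faces, which vanishes exactly when $\zeta=0$ there; this yields the stated criterion $\zeta=0$ on $\mathcal{F}_h$ (the boundary values of $\zeta$ being immaterial since $\vbold_h\cdot\nbold=0$ on $\partial\Omega$).

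For necessity I would argue contrapositively, exhibiting admissible test functions that switch on each nonnegative contribution one at a time: since $c_h+d_h$ is a sum of nonnegative terms, its vanishing over the whole kernel forces each coefficient to vanish. A function in $\bm{Z}_h$ with nonzero broken divergence or nonzero normal jump forces $\gamma=0$; a function with $\llcurve\vbold_h\rrcurve\cdot\nbold\neq0$ and $\llbracket\vbold_h\rrbracket\neq0$ on a chosen interior face forces the interior $\zeta$ to vanish there; and a function with a prescribed sign of $\vbold_h\cdot\nbold$ on a boundary face forces the pointwise coefficient $\zeta|\vbold_h\cdot\nbold|+0.5(1-\theta)\vbold_h\cdot\nbold$ to be zero. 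For the last point I would split on the sign of $1-\theta$: choosing the sign of $\vbold_h\cdot\nbold$ opposite to the one favored by the $\zeta|\cdot|$ term shows the coefficient is strictly positive somewhere unless $\theta=1$, after which $\zeta|\vbold_h\cdot\nbold|$ forces $\zeta=0$ on the boundary faces as well. In the \textsf{Hdiv} case the same logic applies, but only the interior $\zeta$ term survives, so necessity reduces to $\zeta=0$ on the interior faces.

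The main obstacle is the construction of these test functions in the necessity part: unlike Lemma \ref{positivech}, where positivity is invoked for a fixed function, here I must produce explicit members of the constrained kernel $\bm{Z}_h$ (the kernel of $b_h$, which enforces only the \emph{weak} divergence constraint) whose pointwise broken divergence, face normal jumps, and boundary normal traces are nonzero and can be localized to a single face. The key observation is that $b_h(\vbold_h,q_h)=0$ imposes only finitely many linear constraints against $Q_h$, leaving enough freedom in the high-dimensional space $\bm{V}_h$ to realize each target locally; for $\bm{V}_h^{\text{div}}$ this normal-direction freedom is absent by design, which is exactly why the divergence and boundary terms drop and only interior $\zeta$ remains. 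Verifying that these localized functions indeed lie in $\bm{Z}_h$ is the one genuinely technical step; the rest is bookkeeping with the identity \eqref{chuuu}.
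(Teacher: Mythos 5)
Your argument is correct and shares the paper's skeleton: start from the energy identity \eqref{conservation_equation}, use the nonnegativity of $d_h$ and of the diagonal form \eqref{chuuu} to conclude that conservation forces every face/element contribution to vanish, and then read off the parameter conditions. The one place where you genuinely diverge is the necessity of $\theta=1$. You obtain it by sign-testing the boundary coefficient $\zeta|\vbold_h\cdot\nbold|+0.5(1-\theta)\,\vbold_h\cdot\nbold$ against kernel functions with normal traces of both signs, which requires the localized constructions you flag as the technical crux. The paper short-circuits this entirely: since the standing hypothesis of Lemma \ref{positivech} (inherited through Theorems \ref{stability_theorem} and \ref{fully_stability}) already imposes $\zeta_F\geq 0.5|1-\theta|$ on boundary faces, the conclusion $\zeta=0$ on $\mathcal{F}_h$ immediately gives $0\geq 0.5|1-\theta|$, hence $\theta=1$, with no boundary-face witness needed. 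Your route is more self-contained (it would prove $\theta=1$ even without assuming the stability condition), but it is also where all the labor sits; if you adopt the paper's shortcut you only need witnesses for the $\gamma$ and interior-$\zeta$ terms. On those witnesses, note a looseness you share with the paper: the quantity that must vanish in \eqref{conservation_equation} is evaluated at the discrete midpoints $\ubold_h^{n+\frac12}$, not at arbitrary elements of $\bm{Z}_h$, so strictly speaking the ``only if'' direction needs a discrete solution (e.g.\ generated from suitable initial data) whose midpoint activates each term, not merely an element of the kernel; the paper simply asserts the implications ``$d_h=0\Rightarrow\gamma=0$'' and ``$c_h=0\Rightarrow\zeta=0$'' without any construction, so your explicit attention to this point, while unfinished, is if anything more careful than the source. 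Your treatment of the \textsf{Hdiv} case (automatic vanishing of $d_h$ and of the boundary term, so only the interior upwind term survives) matches the paper exactly, including the observation that only the interior values of $\zeta$ actually matter.
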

\begin{proof}
From Lemma \ref{positivech}, Eqs.~\eqref{d_h} and \eqref{conservation_equation}, we observe that the only possibility that the total energy is conserved is when both $c_h(\bm{u}_h^{n+\frac{1}{2}};\bm{u}_h^{n+\frac{1}{2}},\bm{u}_h^{n+\frac{1}{2}})$ and $d_h(\ubold_h^{n+\frac{1}{2}},\ubold_h^{n+\frac{1}{2}})$ vanish. Clearly $d_h(\ubold_h^{n+\frac{1}{2}},\ubold_h^{n+\frac{1}{2}})=0$ implies $\gamma=0$. In view of \eqref{chuuu}, $c_h(\bm{u}_h^{n+\frac{1}{2}};\bm{u}_h^{n+\frac{1}{2}},\bm{u}_h^{n+\frac{1}{2}})=0$ implies $\zeta=0.$ Finally, stability condition $\zeta_{F}\geq 0.5|1-\theta|$ for $F\in\mathcal{F}_h^\partial$ implies that $\theta$ can only be 1. 

For the H(div) scheme, recall that $d_h(\bm{u}_h,\bm{u}_h)=0$. Then energy conservation is equivalent to
\begin{align*}
\nonumber &c_{h} \left(\ubold_h; \ubold_h, \ubold_h \right)=\iipbf{\zeta \left| \ubold_h\cdot \bm{n} \right|\llbracket \ubold_h \rrbracket}{\llbracket \ubold_h \rrbracket}=0.
\end{align*}
The proof is complete.
\end{proof}\qed

We use $\gamma$ to weakly enforce the consistency of Scheme \textsf{dG1}. Hence the requirement $\gamma=0$ does not make sense in practice for \textsf{dG1}. For \textsf{dG2}, $\gamma=0$ indicates a lack of pressure robustness. Therefore we do not believe the conservation of energy (at the sacrifice of pressure robustness) is worth pursuing in the dG formulations within the current framework, actually numerical experiments show that both the Newton nonlinear solver with the MUMPS linear solver or with the GMRES linear solver break down after a few time steps. 

We then consider the preservation of discrete linear and angular momentum. Straightforward calculations show that dG schemes do not conserve the linear and angular momentum. For the H(div) scheme, the test function $\bm{v}_h=\bm{e}_i$ (for linear momentum) or $\vbold_h=\bm{x}\times\bm{e}_i$ (for angular momentum) is not contained in $\bm{V}^{\text{div}}_h$, where $\bm{e}_i$ is the $i$-th unit vector. Following \cite{charnyi2017conservation}, we assume that $\bm{u}_h= 0$ and $P_h= 0$ in an inner neighborhood $\Omega^b$ of the boundary $\partial\Omega$ and take $\Omega^i=\Omega\backslash\Omega^b.$ Given a continuous function $\bm{\phi}$, we take $\bm{\phi}^i=\bm{\phi}$ on $\Omega^i$ and $\bm{\phi}^i|_{\Omega^b}$ to be suitably defined to meet the boundary condition.
Integration by parts yields
\begin{align*}
\iipbf{ \left( \ubold_h \cdot \nbold \right) \llbracket \ubold_h \rrbracket}{\bm{\phi}^i}&=(\ubold_h\cdot\nabla_h\ubold_h,\bm{\phi}^i)+(\ubold_h,\nabla_h\cdot(\bm{\phi}^i\otimes\ubold_h))  \\[1.5ex]
&=(\ubold_h\cdot\nabla_h\ubold_h,\bm{\phi}^i)+(\nabla\cdot\ubold_h,\ubold_h\cdot\bm{\phi}^i)+(\ubold_h,\ubold_h\cdot\nabla_h\bm{\phi}^i)\\[1.5ex]
&=(\ubold_h\cdot\nabla_h\ubold_h,\bm{\phi}^i)+(\ubold_h,\ubold_h\cdot\nabla\bm{\phi}^i)
\end{align*}
where we have used $\nabla\cdot\ubold_h=0$ pointwise, $\ubold_h|_{\Omega^b}=0$ and $\bm{\phi}^{i}$ is continous within $\Omega^{i}$. Using $\ubold_h|_{\Omega^b}=0$ again, we have
\begin{equation}\label{momentumeqn}
\iipbf{ \left( \ubold_h \cdot \nbold \right) \llbracket \ubold_h \rrbracket}{\bm{\phi}}=(\ubold_h\cdot\nabla_h\ubold_h,\bm{\phi})+(\ubold_h,\ubold_h\cdot\nabla\bm{\phi}).
\end{equation}
Taking $\bm{\phi}=\bm{e}_i$ in \eqref{momentumeqn}, it follows that
\begin{align}\label{linear}
\iipbf{ \left( \ubold_h \cdot \nbold \right) \llbracket \ubold_h \rrbracket}{\bm{e}_i}=(\ubold_h\cdot\nabla_h\ubold_h,\bm{e}_i).
\end{align}
For $\bm{\phi}=\bm{x}\times\bm{e}_i$ and any function $\gbold$, elementary tensor calculation shows that  $\gbold\cdot(\gbold\cdot\nabla\bm{\phi})=0$. Therefore \eqref{momentumeqn} with $\bm{\phi}=\bm{x}\times\bm{e}_i$ implies 
\begin{align}\label{angular}
\iipbf{ \left( \ubold_h \cdot \nbold \right) \llbracket \ubold_h \rrbracket}{\bm{x}\times\bm{e}_i}=(\ubold_h\cdot\nabla_h\ubold_h,\bm{x}\times\bm{e}_i).
\end{align}
As a consequence, we obtain the next theorem.
\begin{theorem}[Linear and Angular Momentum Conservation-$H$(div)] Let the assumptions in Theorem \ref{disenergy} hold. Assume that $\bm{u}_h=\bm{0}$, $P_h=0$ in $\Omega^b.$ Then Scheme \eqref{CrankNicolson} with $\bm{V}_h=\bm{V}_h^{\emph{div}}$ (in addition $k\geq 1$ if $RT$ element is used) and $c_h=c_h^1$ or $c_h^{2}$ conserve the linear and angular momentum. 
\label{linear_angular}
\end{theorem}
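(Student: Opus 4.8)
The plan is to adapt the momentum-conservation argument of \cite{charnyi2017conservation} to the $H(\text{div})$ setting, relying on the two identities \eqref{linear} and \eqref{angular} already derived for the $H(\text{div})$ convective form. Recall first that for $\bm{V}_h=\bm{V}_h^{\text{div}}$ the penalty vanishes, $d_h\equiv0$, and $b_h(\bm{v}_h,q_h)=(\nabla\cdot\bm{v}_h,q_h)$ since $\llbracket\bm{v}_h\cdot\bm{n}\rrbracket=0$. Linear and angular momentum correspond respectively to $\bm{\phi}=\bm{e}_i$ and $\bm{\phi}=\bm{x}\times\bm{e}_i$, and in each case conservation amounts to showing $(\delta_t\ubold_h^n,\bm{\phi})=0$ for all $n$, i.e.\ $\int_\Omega\ubold_h^{n+1}\cdot\bm{\phi}=\int_\Omega\ubold_h^{n}\cdot\bm{\phi}$.

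The crucial preparatory step is to produce an admissible test function. Neither $\bm{e}_i$ nor $\bm{x}\times\bm{e}_i$ satisfies the discrete constraint $\bm{v}_h\cdot\bm{n}|_{\partial\Omega}=0$, so I would follow the construction preceding \eqref{momentumeqn}: set $\bm{\phi}^i=\bm{\phi}$ on $\Omega^i$ and extend $\bm{\phi}^i$ over the boundary layer $\Omega^b$ so that $\bm{\phi}^i\in\bm{V}_h^{\text{div}}$ with vanishing normal trace on $\partial\Omega$. This requires $\bm{\phi}|_{\Omega^i}$ to lie in the local shape-function space; the field $\bm{x}\times\bm{e}_i$ is genuinely of degree one and is not contained in $\bm{\mathcal{Q}}_0$ of the lowest-order Raviart--Thomas element, which is precisely why the hypothesis $k\ge1$ is imposed for RT (for BDM every $k$ works).

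Inserting $\bm{v}_h=\bm{\phi}^i$ into \eqref{fulldis_1}, I would then treat the three surviving terms. The discrete time-derivative term reduces to $(\delta_t\ubold_h^n,\bm{\phi})$ because $\ubold_h\equiv0$ on $\Omega^b$, so only $\Omega^i$ (where $\bm{\phi}^i=\bm{\phi}$) contributes. The pressure term $(\nabla\cdot\bm{\phi}^i,P_h^{n+\frac12})$ vanishes since $\nabla\cdot\bm{\phi}=0$ on $\Omega^i$ (trivial for $\bm{e}_i$, and $\nabla\cdot(\bm{x}\times\bm{e}_i)=0$) while $P_h\equiv0$ on $\Omega^b$. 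For the convective term I would invoke the $H(\text{div})$ form \eqref{hdiv_convective}: the upwind contribution $\iipbf{\zeta|\ubold_h\cdot\bm{n}|\llbracket\ubold_h\rrbracket}{\llbracket\bm{\phi}^i\rrbracket}$ drops out for any $\zeta\ge0$, because $\llbracket\bm{\phi}^i\rrbracket=0$ on interior faces inside $\Omega^i$ by continuity of $\bm{\phi}$ and $\ubold_h=0$ on faces meeting $\Omega^b$; the two remaining terms cancel exactly via \eqref{linear} for $\bm{\phi}=\bm{e}_i$ and via \eqref{angular} for $\bm{\phi}=\bm{x}\times\bm{e}_i$. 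This yields $(\delta_t\ubold_h^n,\bm{\phi})=0$ and hence the asserted conservation of linear and angular momentum.

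The main obstacle is not the cancellation of the flux terms, which is handed to us by \eqref{linear}--\eqref{angular}, but the rigorous construction of the modified test function $\bm{\phi}^i\in\bm{V}_h^{\text{div}}$ with the prescribed interface normal trace and homogeneous boundary normal trace. One has to verify that an $H(\text{div})$-conforming extension into $\Omega^b$ actually exists inside the chosen RT or BDM space, matching the correct normal degrees of freedom on $\partial\Omega^i$ while annihilating those on $\partial\Omega$; it is exactly this solvability that dictates the $k\ge1$ restriction for Raviart--Thomas elements. Once such $\bm{\phi}^i$ is exhibited, the remaining computation is routine.
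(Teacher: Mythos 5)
Your overall strategy is the paper's: build an admissible test function $\bm{\phi}_h\in\bm{V}_h^{\text{div}}$ equal to $\bm{e}_i$ or $\bm{x}\times\bm{e}_i$ on $\Omega^i$, kill the pressure and penalty terms, and reduce conservation to the vanishing of the convective term via \eqref{linear} and \eqref{angular}. Your observation that the $k\ge1$ restriction for RT comes from $\bm{x}\times\bm{e}_i$ not lying in $\bm{\mathcal{Q}}_0$ is correct and is a point the paper leaves implicit.

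However, there is one genuine gap, and it is precisely the step the paper goes out of its way to flag. You invoke \eqref{hdiv_convective} for the convective term, but that identity was established only for arguments in $\bm{Z}_h^{\text{div}}$, i.e.\ for \emph{pointwise divergence-free} fields. Your test function $\bm{\phi}_h$ is divergence-free on $\Omega^i$ but is only ``suitably extended'' into $\Omega^b$, so in general $\bm{\phi}_h\notin\bm{Z}_h^{\text{div}}$ and \eqref{hdiv_convective} does not apply to it directly. The paper therefore computes $c_h^1(\ubold_h;\ubold_h,\bm{\phi}_h)$ and $c_h^2(\ubold_h;\ubold_h,\bm{\phi}_h)$ separately. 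For $c_h^1$ the $\theta$-dependent terms still cancel exactly for any $\bm{\phi}_h\in\bm{V}_h^{\text{div}}$ by the identity \eqref{identity_hdiv}, so your conclusion happens to be reachable there; but for $c_h^2$ a residual term $\frac{\theta}{2}\left(\left(\nabla_h\cdot\bm{\phi}_h\right)\ubold_h,\ubold_h\right)$ survives, and it vanishes only after combining $\nabla\cdot\bm{\phi}_h=0$ in $\Omega^i$ with $\ubold_h=0$ in $\Omega^b$. You already use both of these facts (the first for the pressure term, the second for the time-derivative term), so the repair is immediate, but as written your argument skips the case distinction and silently discards a term that is not zero by \eqref{hdiv_convective} alone. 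Conversely, you devote most of your effort to the existence of the $H(\text{div})$-conforming extension into $\Omega^b$, which the paper simply asserts; that is a fair point to raise but it is not where the real work of the proof lies.
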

\begin{proof}
Let $\bm{\phi}_h\in \bm{V}_h^{\text{div}}$ be $\bm{e}_i$ or $\xbold\times\bm{e}_i$ within $\Omega^i$ and  arbitrarily defined in $\Omega^b$ to meet the boundary condition. Clearly we have 
\begin{align}\label{observation}
\nabla\cdot\bm{e}_i=0, \qquad \nabla\cdot(\bm{\xbold\times\bm{e}_i})=0,
\end{align}
and thus $\nabla\cdot\bm{\phi}_h=0$ in $\Omega^i.$
Then using $d_h=0$ and the assumption $\bm{u}_h|_{\Omega^b}=0$ $P_h|_{\Omega^b}=0$, \eqref{fulldis_1} reduces to
\begin{align*}
    &\left(\delta_t\bm{u}^n_h,\bm{\phi}_h\right) + c_h\left(\bm{u}_h^{n+\frac{1}{2}};\bm{u}_h^{n+\frac{1}{2}},\bm{\phi}_h \right)=0.
\end{align*}
The preservation of linear ($\bm{\phi}_h=\bm{e}_i$) and angular momentum ($\bm{\phi}_h=\bm{x}\times\bm{e}_i$) is equivalent to $c_h\left(\bm{u}_h^{n+\frac{1}{2}};\bm{u}_h^{n+\frac{1}{2}},\bm{\phi}_h \right)=0$. For simplicity of presentation, we omit the superscript of $\bm{u}_h$ in the rest of the proof.

It is noted  that $\bm{\phi}_h$ may not belong to $\bm{Z}_h^{\text{div}}$. Therefore one could not directly make use of \eqref{hdiv_convective}, \eqref{linear} and \eqref{angular} to conclude
\begin{align*}
    c_h(\ubold_h,\ubold_h,\bm{\phi}_h)=\iipbf{\zeta \left| \ubold_h\cdot \bm{n} \right|\llbracket \ubold_h \rrbracket}{\llbracket \bm{\phi}_h \rrbracket}.
\end{align*}
Instead one need to study $c_h^{1}$ and $c_h^{2}$ separately. We recall that \eqref{hdiv_convective} implies all $H(\text{div})$-conforming scheme recover the same numerical velocity, and the linear and angular moment only depend on the velocity field. Hence it is enough to analyze $c_{h}^1$ and $c_{h}^2$. 
\begin{itemize}
\item The case of $c_h^{1}$  

It follows from the identity
\begin{align}\label{identity_hdiv}
\iipbf{ \left( \vbold_h \cdot \nbold \right) \llbracket \ubold_h \rrbracket}{\llcurve \ubold_h \rrcurve}= (\vbold_h \cdot \nabla_h \ubold_h,\ubold_h)+\frac{1}{2} (\left(\nabla_h \cdot \vbold_h \right) \ubold_h,\ubold_h),\quad\forall \bm{u}_h,\bm{v}_h\in \bm{V}_{h}^{\text{div}}
\end{align}
and $\nabla\cdot\ubold_h=0$, $\llbracket\bm{u}_h\cdot\bm{n}\rrbracket=\llbracket\bm{v}_h\cdot\bm{n}\rrbracket=0$ for all $F\in \mathcal{F}_h$, and \eqref{ch_rotational} that
\begin{align*}
&c^1_{h} \left(\ubold_h; \ubold_h, \bm{\phi}_h \right)= (\ubold_h \cdot \nabla_h \ubold_h,\bm{\phi}_h)- \iipbf{ \left( \ubold_h \cdot \nbold \right) \llbracket \ubold_h \rrbracket}{\llcurve \bm{\phi}_h \rrcurve}+\iipbf{\zeta \left| \ubold_h\cdot \bm{n} \right|\llbracket \ubold_h \rrbracket}{\llbracket \bm{\phi}_h \rrbracket}.
\end{align*} 
We conclude
$c_h^{1}(\ubold_h,\ubold_h,\bm{\phi}_h)=0$ from \eqref{linear}, \eqref{angular}, $\llbracket\bm{\phi}_h\rrbracket=0$ within $\Omega^{i}$ and $\bm{u}_h=0$ within $\Omega^{b}$. 
\item The case of $c_h^{2}$ 

As in the case of $c_h^{1}$, we obtain from \eqref{ch_rotational_2} that  
\begin{align*}
c^2_{h} \left(\ubold_h; \ubold_h, \bm{\phi}_h \right)&= (\ubold_h \cdot \nabla_h \ubold_h,\bm{\phi}_h)- \iipbf{ \left( \ubold_h \cdot \nbold \right) \llbracket \ubold_h \rrbracket}{\llcurve \bm{\phi}_h \rrcurve}+ \iipbf{\zeta \left| \llcurve\ubold_h\rrcurve \cdot \bm{n} \right|\llbracket \ubold_h \rrbracket}{\llbracket \bm{\phi}_h \rrbracket}\\
&+\frac{\theta}{2} (\left(\nabla_h \cdot \bm{\phi}_h \right) \ubold_h,\ubold_h).
\end{align*} 
again with \eqref{observation} and thanks to the assumption that $\ubold_h=0$ within $\Omega^{b}$, we conclude that 
\begin{align*}
\frac{\theta}{2} (\left(\nabla \cdot \bm{\phi}_h \right) \ubold_h,\ubold_h)=0,
\end{align*} 
the rest of the steps are identical to those in the case of $c_h^{1}$. \qed
\end{itemize}
\end{proof}
 
\begin{remark}
The upwinding H(\emph{div}) scheme ($\zeta>0$) fails to preserve the total energy. However it is interesting to see that the upwinding H(\emph{div}) scheme still conserves the linear- and angular momentum.
\end{remark}
\section{Numerical Experiments}\label{secNE}
In this section, we present numerical simulations to test the conservation properties of the $\textsf{Hdiv}$ schemes \eqref{CrankNicolson}, and the performance of the dG schemes \eqref{CrankNicolson} and \eqref{compact_NS} in steady and unsteady problems with $c_h$ being either $c^{1}_{h}$ (\textsf{dG1}) or $c^{2}_{h}$ (\textsf{dG2}), and $\bm{\tau}_h(\bm{u}_h)=\nabla_h\ubold_h$. The viscous penalization parameter $\eta$ is taken as $\eta=3(k+1)(k+2)$ while $\gamma$ and $\zeta$ are specified in each example. 
In each experiment, we use the non-homogeneous Dirichlet boundary condition and  weakly enforce it via modifying the right hand side of \eqref{CrankNicolson} or \eqref{compact_NS}.
Our numerical simulations are performed in FEniCS \cite{LangtangenLogg2017}, an open-source computing platform for solving partial differential equations using finite element methods. For all numerical experiments in this section, we use the Newton nonlinear solver with the \textsf{MUMPS} linear solver inside FEniCS to solve the nonlinear systems of equations arising from fully discrete schemes. The absolute and relative error tolerances in the Newton solver are set to be $10^{-8}$ for dynamic problems and $10^{-10}$ for stationary problems.
\subsection{Conservation Test}\label{conservation_test}
In this subsection, we use the problem of Gresho vortex \cite{charnyi2017conservation} to test the conservation properties of the $\textsf{Hdiv}$ scheme \eqref{CrankNicolson} with both $\zeta=0$ (central flux) and $\zeta=0.5$ (upwind flux) using BDM element for $\bm{V}_h^{\text{div}}$. The space domain is $\Omega:=[-0.5,0.5]^2$ with the initial velocity and kinematic pressure fields described by 
\begin{align*}
\ubold_{\phi}(r,\phi) &=\begin{cases}
  5r,  &\text{$0\leq r\leq 0.2$},  \\   
  2-5r, &\text{$0.2\leq r\leq 0.4$},  \\ 
  0, &\text{$0.4\leq r$}, 
\end{cases} \\
\ubold_{r}(r,\phi) &=0.\\
p &=\begin{cases}
  12.5r^{2}+C_1,  &\text{$0\leq r\leq 0.2$},  \\  
  12.5r^{2}-20r+4\text{log}(r)+C_2, &\text{$0.2\leq r\leq 0.4$},  \\
  0, &\text{$0.4\leq r$}, 
\end{cases} 
\end{align*}
Here polar coordinates $\left(r, \phi \right)$ are used, and $C_1=C_2-4+4\text{log}(0.2)$ and $C_1=5.2-4\text{log}(0.4)$. Note that the initial state is an exact solution of the steady incompressible Euler equations, therefore an accurate scheme should preserve the solution when there is no forcing terms. For this problem, we run the simulation with $k=1$ and $h_{\text{max}}=0.0283$ for $10s$ with time step $\tau=0.01s$. From Figures~\ref{fig:Conservation_1} and \ref{Gresho_picture}, we have the following interesting observations
\begin{itemize}
\item Both upwind and central fluxes seem to be able to conserve the total linear momentum in both $x_{1}$ and $x_{2}$ directions.
\item Energy dissipation appears when upwinding since $\zeta>0$, but the maximum relative error of total energy is less than $0.1\%$, which is almost negligible.
\item Compared to upwind flux, the central flux fails to conserve the total angular momentum after about $5s$. This is due to the violation of the assumption $\bm{u}_h=0$ near $\partial{\Omega}$, which is also confirmed by the left column of Figure \ref{Gresho_picture}. However, one thing worth noticing is that the maximum relative error of angular momentum for central flux is within $5\%$,  acceptable for most engineering applications. Therefore one may consider the central flux being able to conserve the angular momentum. 
\item Central fluxes have a much larger (more than $25$ times) $L^{2}$ error in the velocity compared with upwind flux. Figure~\ref{Gresho_picture} indicates that central flux fails to preserve the right physics after about $3s$ and makes almost no physical sense when $t=9s$, although all global conservation properties hold up to $10s$ (if one's tolerance of relative error is within $5\%$).
\end{itemize}
Through this example, we show that the conservation of global physical quantities such as energy, linear and angular momentum may not be a good indicator for a well-behaved numerical scheme as they lack the control of local behaviors, which could be as bad as the left figure in the lower row of Figure~\ref{Gresho_picture}. In addition, we have shown that the upwinding H(div) scheme could numerically preserve linear and angular momentum, as predicted by our theoretical analysis, and this point has not been seriously discussed in the literature.

\begin{figure}[ht]
\centering
\begin{subfigure}{0.4\textwidth}
  \centering
  \includegraphics[width=1.0\linewidth,trim={0cm 0cm 0cm 0cm},clip]{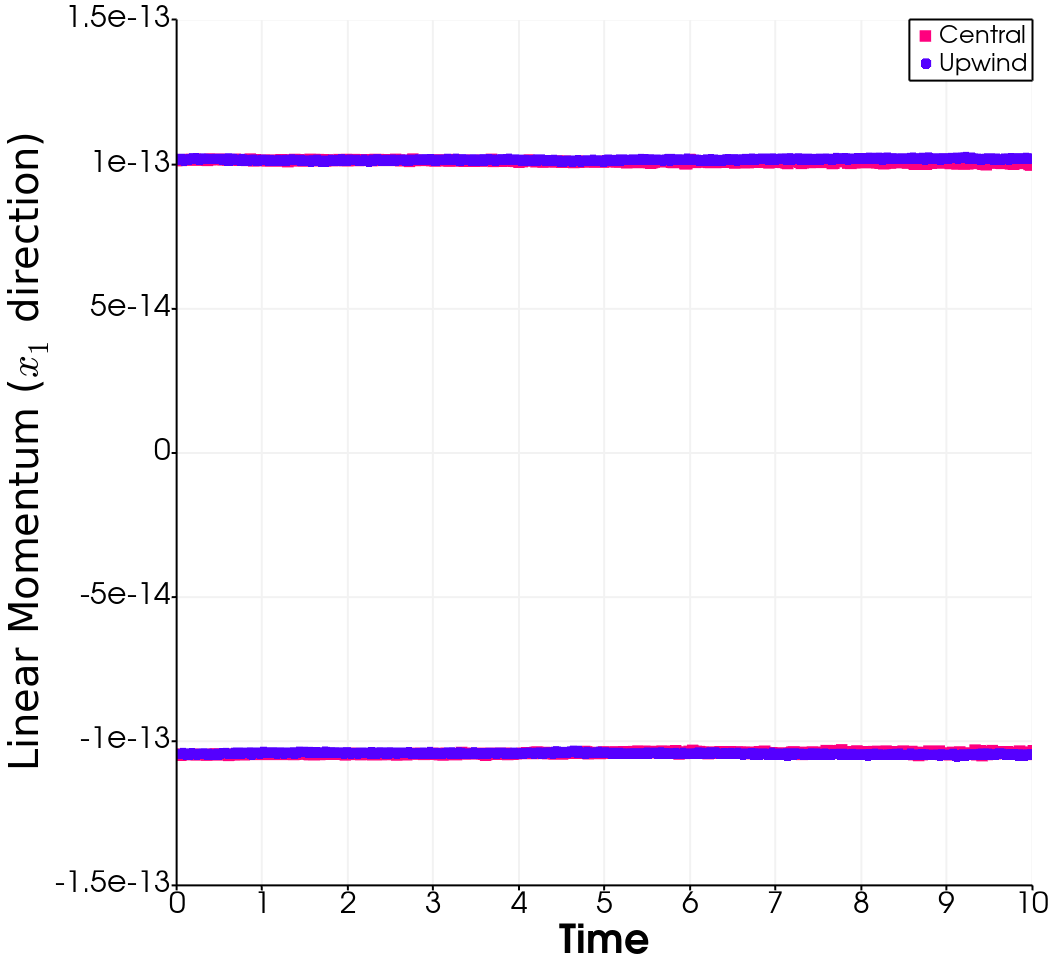}
\end{subfigure}%
\begin{subfigure}{0.4\textwidth}
  \centering
  \includegraphics[width=1.0\linewidth,trim={0cm 0cm 0cm 0cm},clip]{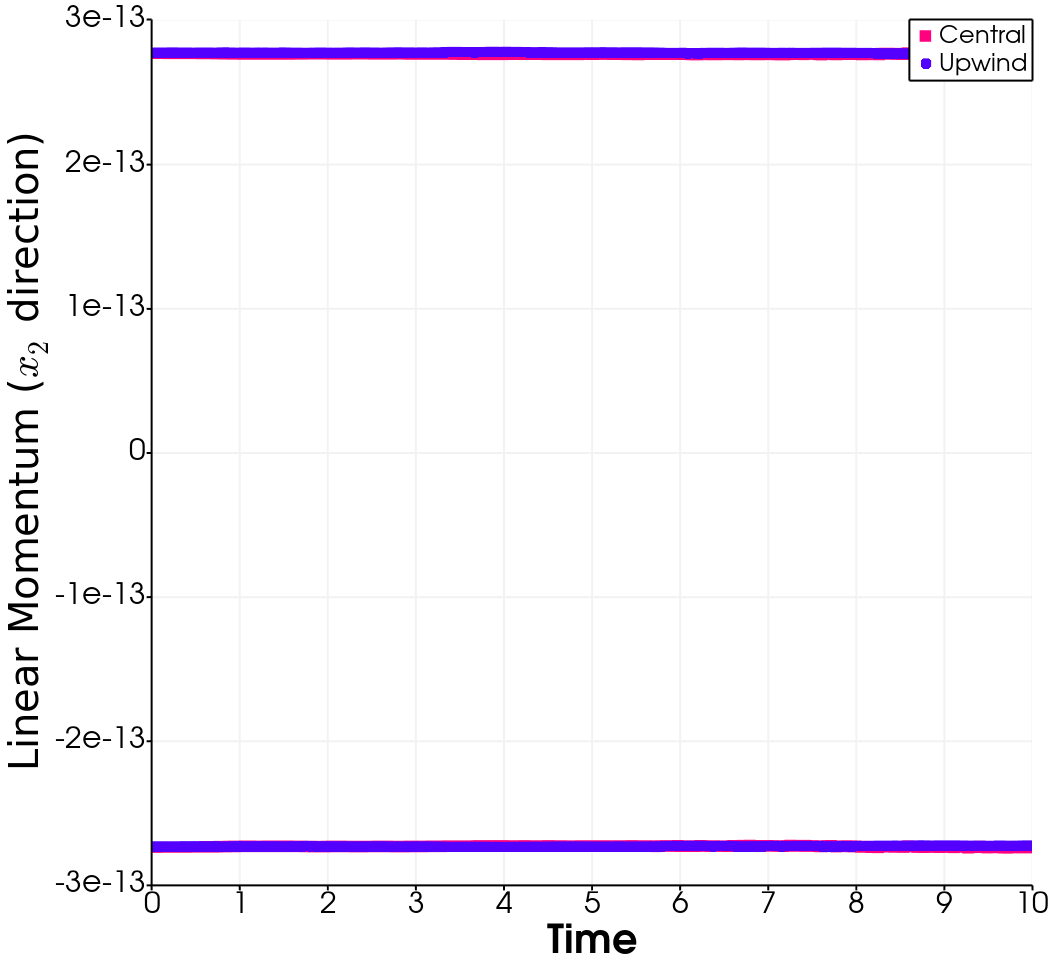}
\end{subfigure}
\caption{Gresho Vortex: Plots of time versus linear momentum in direction $x_{1}$ and $x_{2}$.}
\label{fig:Conservation_1}
\end{figure}
\begin{figure}[ht]
\centering
\begin{subfigure}{0.4\textwidth}
  \centering
  \includegraphics[width=1.0\linewidth,trim={0cm 0cm 0cm 0cm},clip]{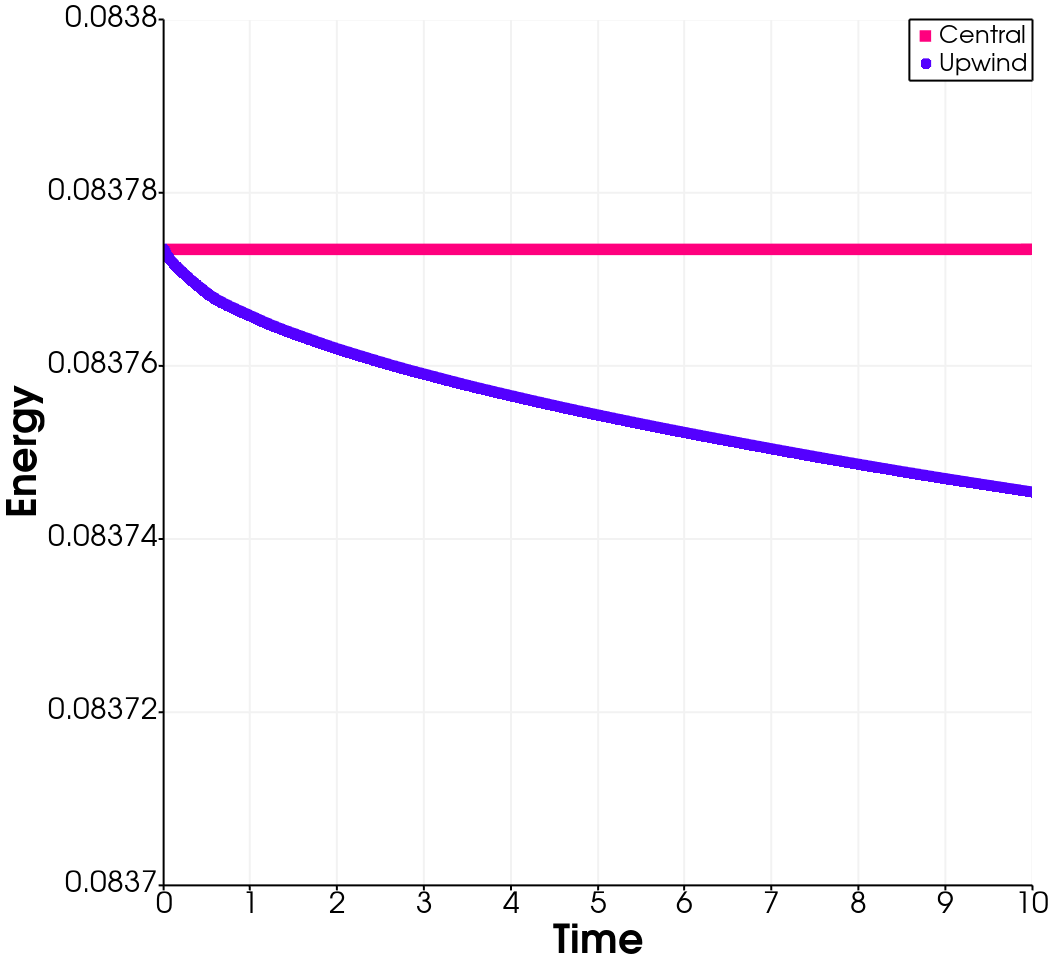}
\end{subfigure}%
\begin{subfigure}{0.4\textwidth}
  \centering
  \includegraphics[width=1.0\linewidth,trim={0cm 0cm 0cm 0cm},clip]{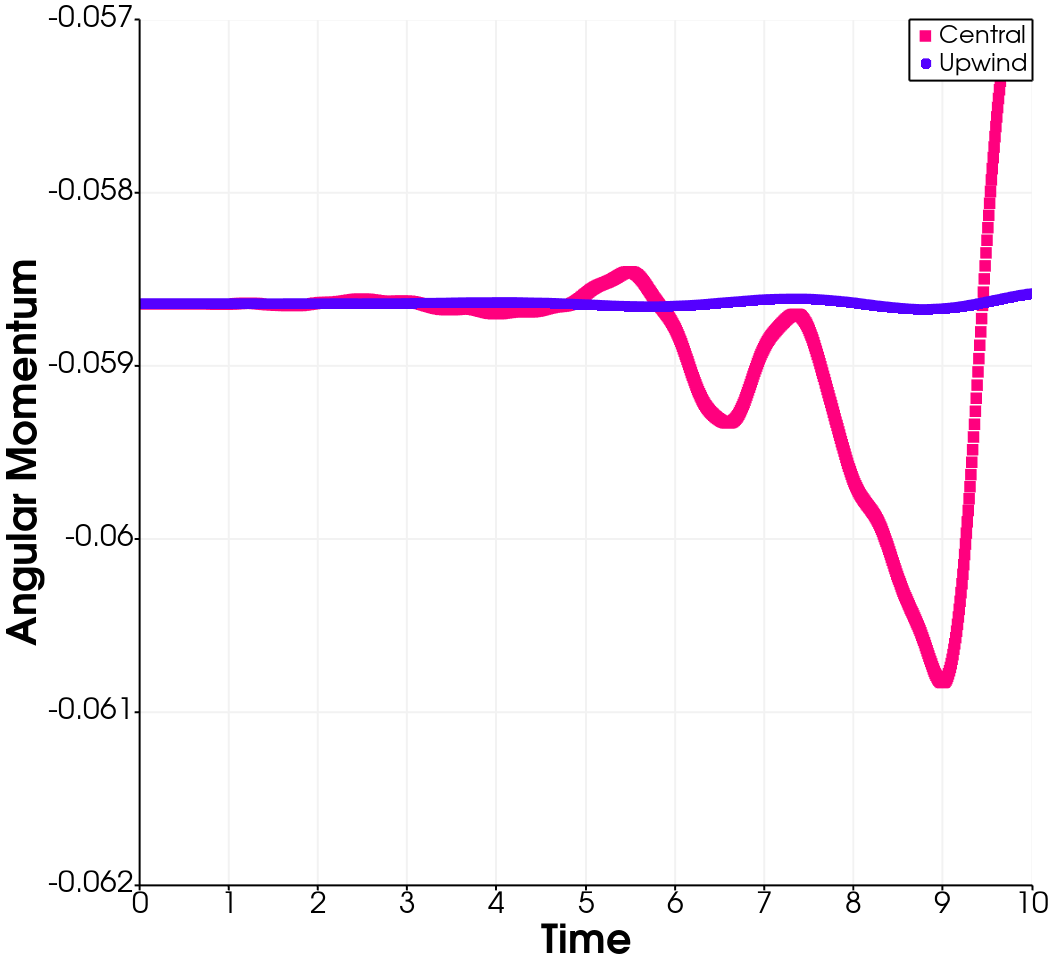}
\end{subfigure}
\begin{subfigure}{0.4\textwidth}
  \centering
  \includegraphics[width=1.0\linewidth,trim={0cm 0cm 0cm 0cm},clip]{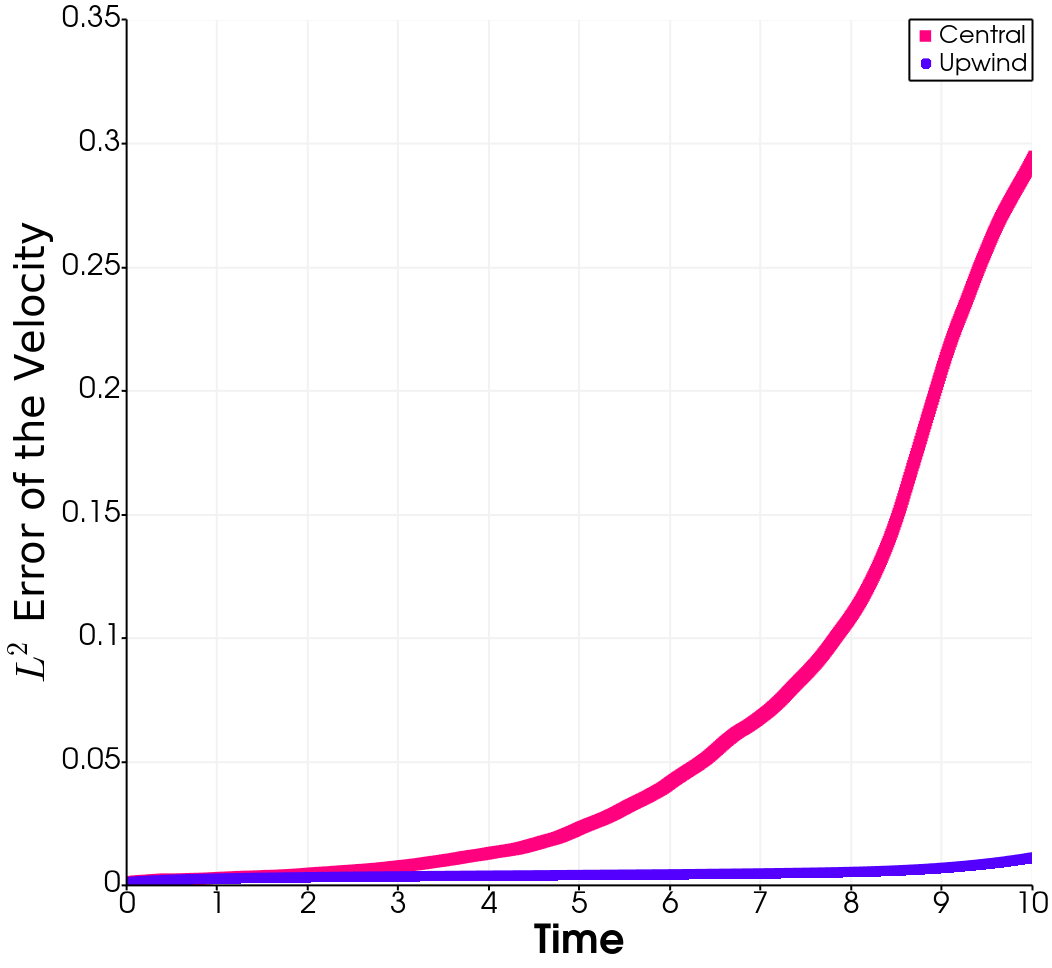}
\end{subfigure}
\caption{Gresho Vortex: Plots of time versus energy, angular momentum, and the velocity $L^{2}$ error.}
\label{fig:Conservation_2}
\end{figure}
\begin{figure*}
\centering
\begin{multicols}{2}
    \includegraphics[width=0.75\linewidth,trim={7cm 2cm 0cm 2cm},clip]{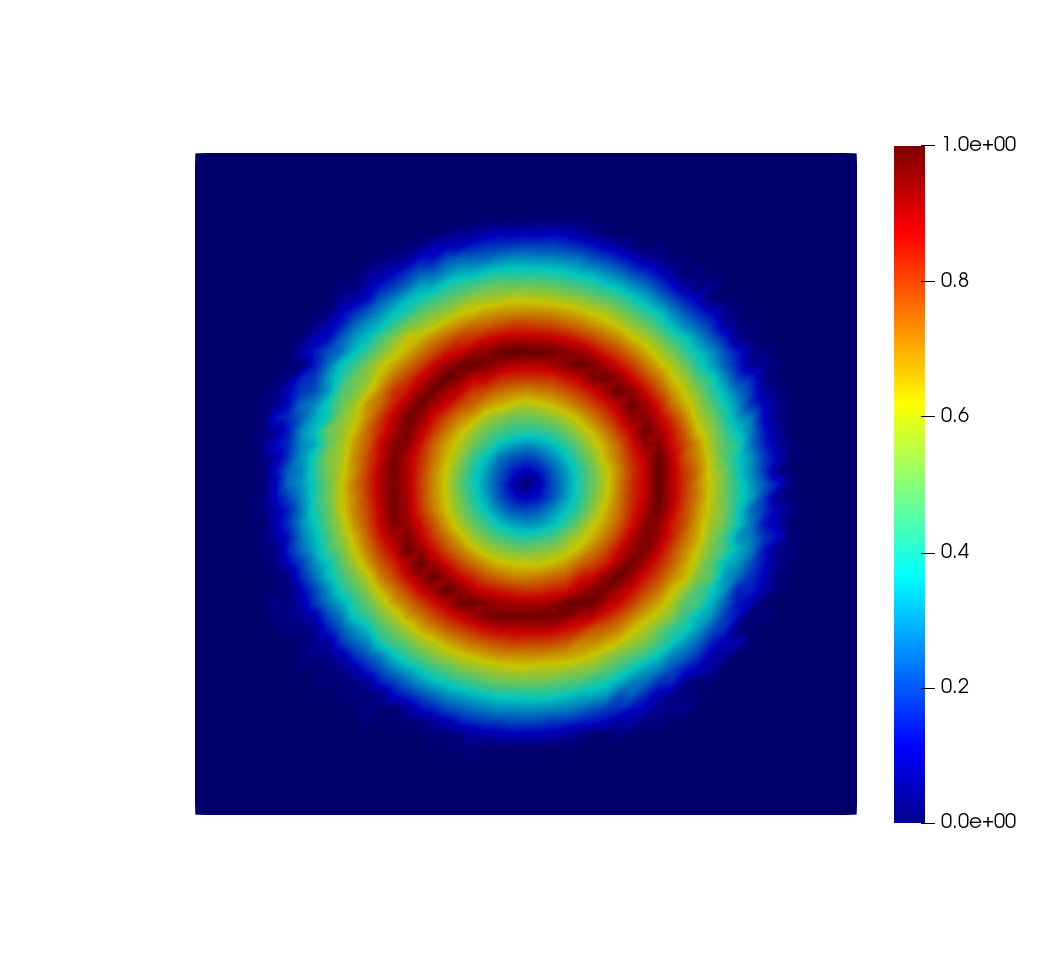}\par
    \includegraphics[width=0.75\linewidth,trim={7cm 2cm 0cm 2cm},clip]{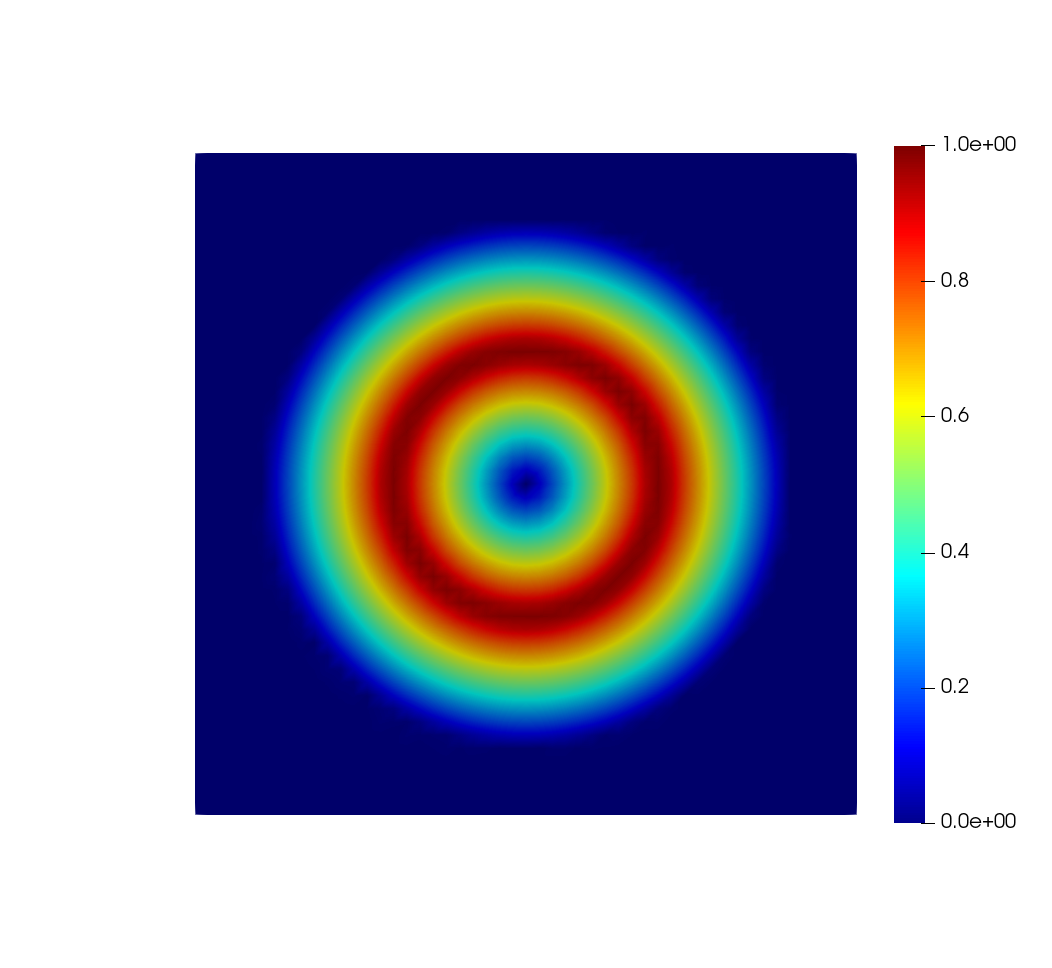}\par 
    \end{multicols}
\begin{multicols}{2}
    \includegraphics[width=0.75\linewidth,trim={7cm 2cm 0cm 2cm},clip]{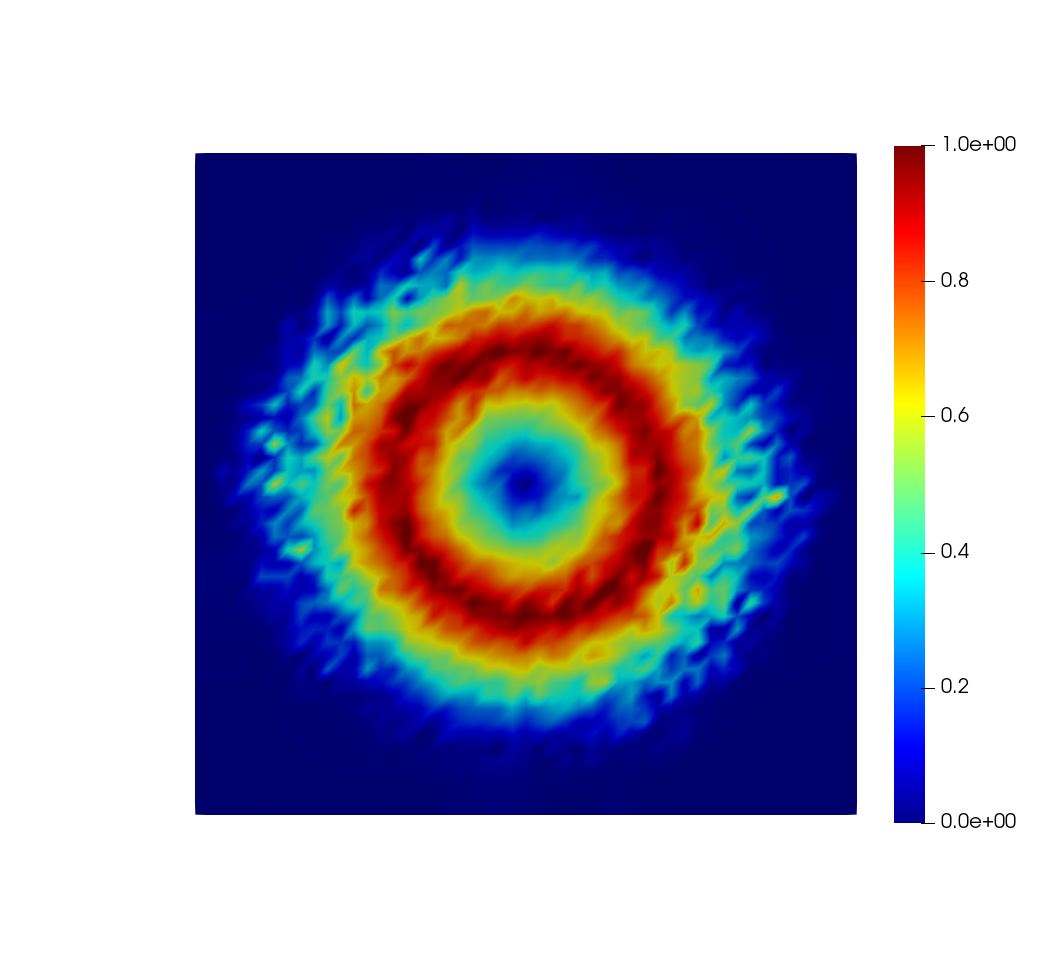}\par
    \includegraphics[width=0.75\linewidth,trim={7cm 2cm 0cm 2cm},clip]{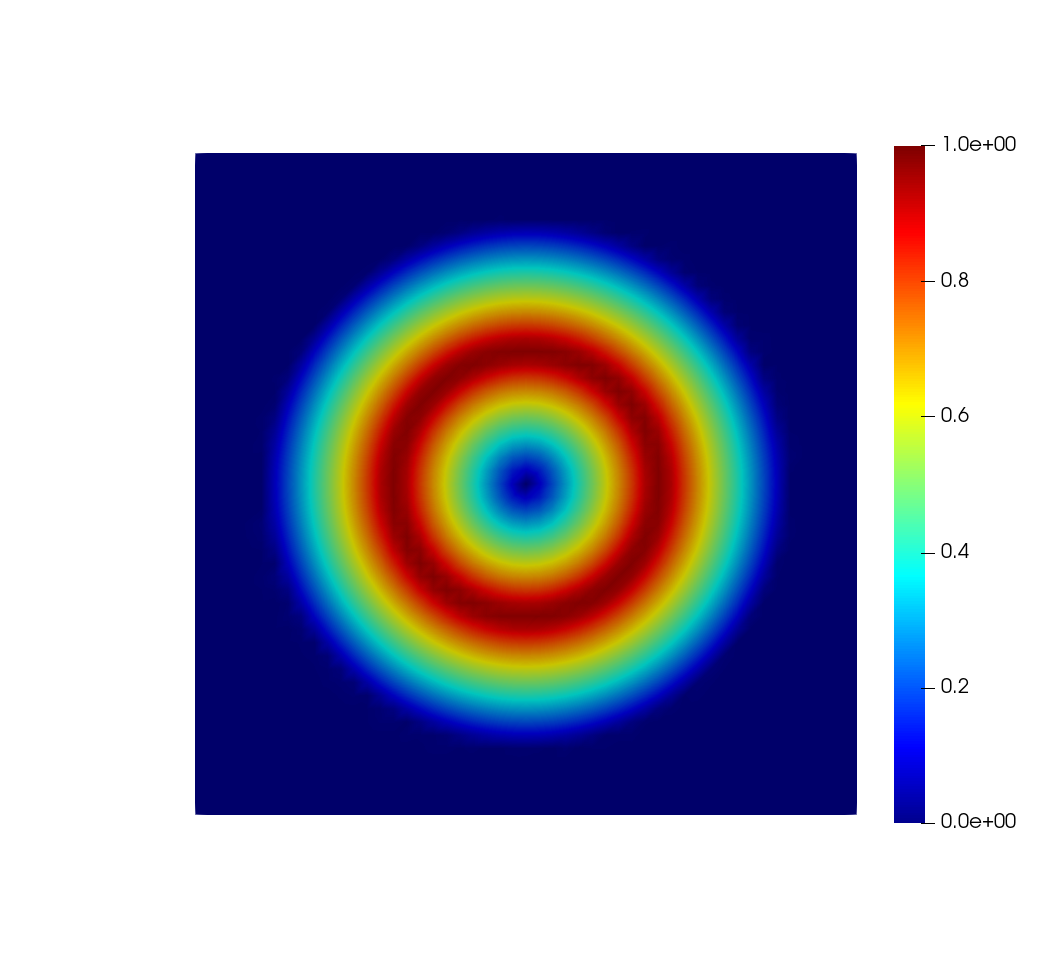}\par
\end{multicols}
\begin{multicols}{2}
    \includegraphics[width=0.75\linewidth,trim={7cm 2cm 0cm 2cm},clip]{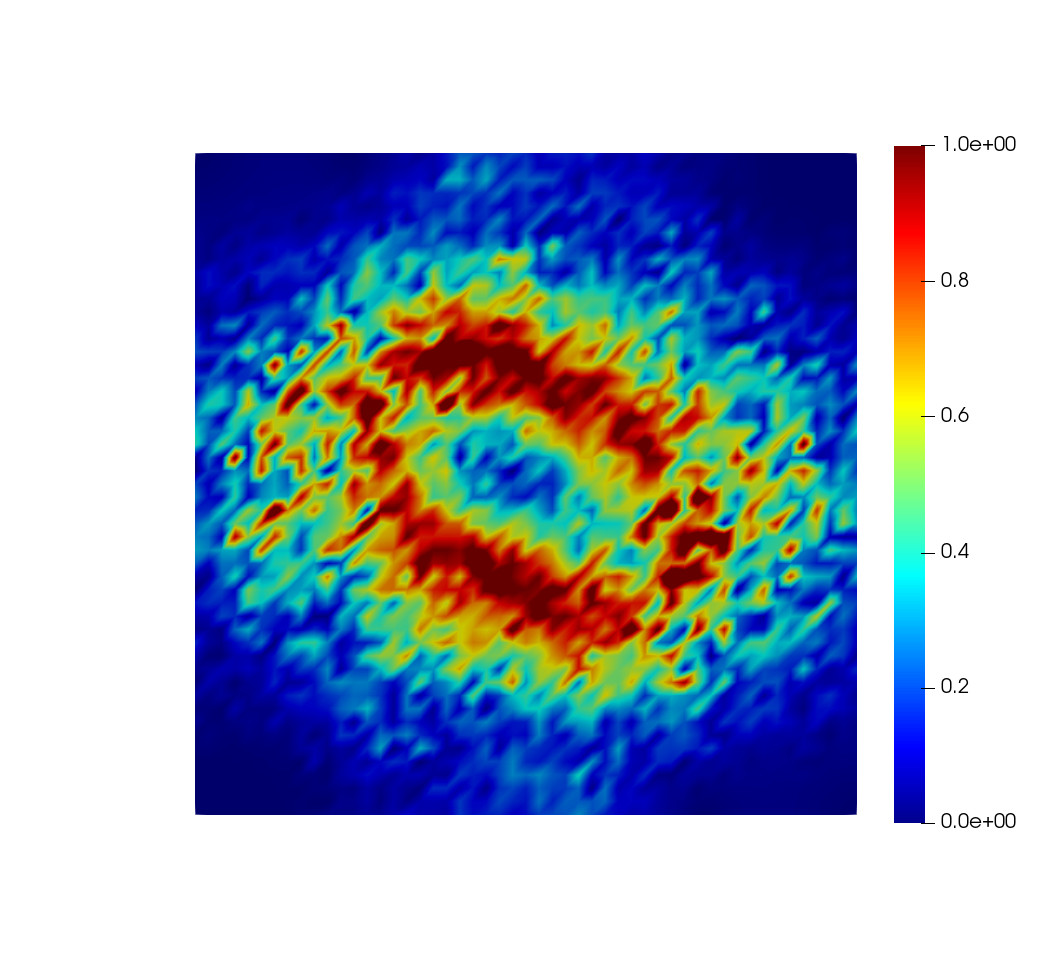}\par
    \includegraphics[width=0.75\linewidth,trim={7cm 2cm 0cm 2cm},clip]{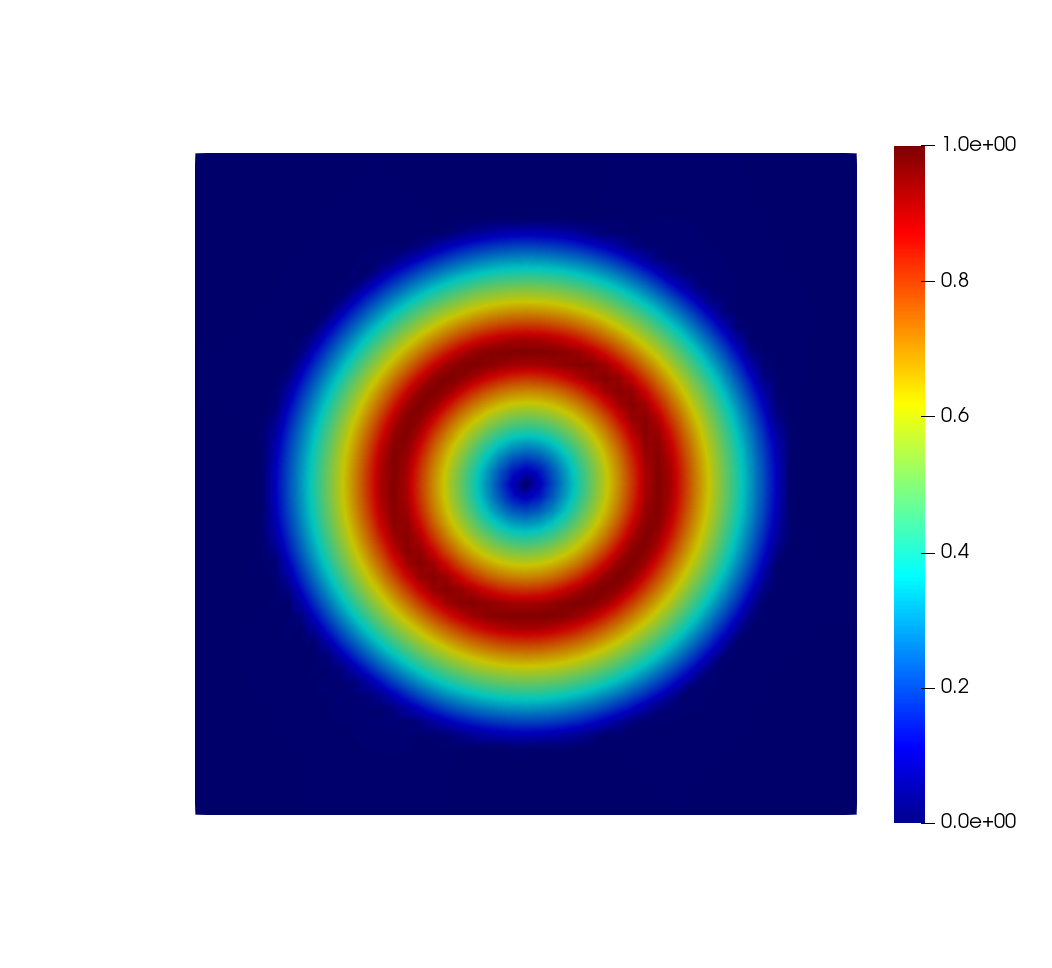}\par
\end{multicols}
\caption{Gresho vortex: Contours of the velocity magnitude with $t=3.0s$ (upper row), $t=6.0s$ (middle row) and $t=9.0s$ (lower row) when central (left) and upwind flux (right) with $h_{\text{max}}=0.0283$ and $k=1$}
\label{Gresho_picture}
\end{figure*}
\clearpage
\subsection{Accuracy of Scheme \textsf{dG2}}\label{c_2h_test}
In this subsection, we test the behavior of Scheme \textsf{dG2} and compare it with the classical scheme from \cite{akbas2018analogue,Ern2012} using the Taylor-Green vortex and the Kovasznay flow problems, which are popular benchmark problems in computational fluid dynamics  \cite{guzman2016h,chorin1968numerical,liu2000high,Ern2012}. More precisely,  Taylor-Green vortex is used with the incompressible Euler equations with the viscous terms viewed as a forcing term, while Kovasznay flow is used as an exact solution to the incompressible Navier-Stokes equations. Throughout the rest of the paper, we set $\zeta=1.0$. 

We first study the Taylor-Green vortex 
in $\mathbb{R}^2$, with the analytical solution given by \cite{guzman2016h}
\begin{align*}
    &\ubold(t,\xbold)=\bigg(\sin(x_{1})\cos(x_{2})e^{-2\nu t},-\cos(x_{1})\sin(x_{2})e^{-2\nu t}\bigg), \\ &p(t,\xbold)=\frac{1}{4}\bigg(\cos(2x_{1})+\cos(2x_{2})\bigg)e^{-4\nu t}.
\end{align*}
The space domain is $\Omega:=[0,2\pi]^2$, which is meshed with squares and each square was further split into two congruent triangles. We set $\nu=0.01$, $\gamma=1000$ and run the simulation for $0.5s$ using the time step $\tau=0.01s$ on meshes with mesh sizes $h_{\text{max}}\in\{0.8886, 0.4443, 0.2221, 0.1777\}$ and polynomial degree $k\in\{0,1,2\}$. Finally, we record the $\Tilde{c}_h$ formulation in \cite{Ern2012} below, which is 
\begin{align*}
\Tilde{c}_h \left(\ubold_h; \ubold_h, \vbold_h \right) = (\ubold_h \cdot \nabla_h \ubold_h,\vbold_h)- \iipbf{ \left( \llcurve\ubold_h\rrcurve \cdot \nbold \right) \llbracket \ubold_h \rrbracket}{\llcurve \vbold_h \rrcurve}+ \iipbf{\zeta\left| \llcurve\ubold_h\rrcurve \cdot \bm{n} \right|\llbracket \ubold_h \rrbracket}{\llbracket \vbold_h \rrbracket}.
\end{align*}
Note that the scheme in \cite{Ern2012} is designed only for steady flows with $\Tilde{c}_h$ not necessarily being positive semi-definite. Therefore this scheme is not proved to be energy stable for unsteady flow. 
\begin{table}[]
\begin{tabular}{|l|l|l|l|l|l|l|l|l|l|l|}
\hline
\multicolumn{1}{|c|}{\multirow{2}{*}{$k$}} & \multicolumn{1}{c|}{\multirow{2}{*}{$h_{\text{max}}$}} & \multicolumn{1}{c|}{\multirow{2}{*}{d.o.f}} & \multicolumn{6}{c|}{Our Schemes $\|\ubold-\ubold_{h}\|_{L^{2}(\Omega)}$}                                                                                                                                                                                                                                                    & \multicolumn{2}{c|}{\multirow{2}{*}{\begin{tabular}[c]{@{}c@{}}Scheme in \cite{akbas2018analogue,Ern2012}\\ $\|\ubold-\ubold_{h}\|_{L^{2}(\Omega)}$\\ Error\qquad   order\end{tabular}}} \\ \cline{4-9}
\multicolumn{1}{|c|}{}                   & \multicolumn{1}{c|}{}                   & \multicolumn{1}{c|}{}                       & \multicolumn{2}{c|}{\begin{tabular}[c]{@{}c@{}}$\theta=0$\\ Error\qquad order\end{tabular}} & \multicolumn{2}{c|}{\begin{tabular}[c]{@{}c@{}}$\theta=1$\\ Error\qquad   order\end{tabular}} & \multicolumn{2}{c|}{\begin{tabular}[c]{@{}c@{}}$\theta=-1$\\ Error\qquad   order\end{tabular}} & \multicolumn{2}{c|}{}                                                                                                         \\ \hline
\multirow{4}{*}{0}                       & 0.8886                                  &    1401                                         & \multicolumn{2}{l|}{2.02e-1\qquad ---}                                                                        & \multicolumn{2}{l|}{2.02e-1\qquad ---}                                                                      & \multicolumn{2}{l|}{2.02e-1\qquad ---}                                                                  & \multicolumn{2}{l|}{2.01e-1\qquad ---}                                                                                                         \\ \cline{2-11} 
& 0.4443                                  &      5601                                       & \multicolumn{2}{l|}{4.44e-2\qquad 2.18}                                                                        & \multicolumn{2}{l|}{4.44e-2\qquad 2.18}                                                                      & \multicolumn{2}{l|}{4.45e-2\qquad 2.18}                                                                  & \multicolumn{2}{l|}{4.44e-2\qquad 2.18}                                                                                                         \\ \cline{2-11} 
& 0.2221                                  &      22041                                       & \multicolumn{2}{l|}{1.03e-2\qquad2.11}                                                                        & \multicolumn{2}{l|}{1.03e-2\qquad2.11}                                                                      & \multicolumn{2}{l|}{1.03e-2\qquad2.11}                                                                  & \multicolumn{2}{l|}{1.03e-2\qquad2.11}                                                                                                         \\ \cline{2-11} 
& 0.1777                                  &      35001                                       & \multicolumn{2}{l|}{6.50e-3\qquad 2.06}                                                                        & \multicolumn{2}{l|}{6.50e-3\qquad 2.06}                                                                      & \multicolumn{2}{l|}{6.51e-3\qquad 2.06}                                                                  & \multicolumn{2}{l|}{6.50e-3\qquad 2.06}                                                                                                         \\ \hline
\multirow{4}{*}{1}                       & 0.8886                                  &    3001                                         & \multicolumn{2}{l|}{2.04e-2\qquad ---}                                                                        & \multicolumn{2}{l|}{2.04e-2\qquad ---}                                                                      & \multicolumn{2}{l|}{2.04e-2\qquad ---}                                                                  & \multicolumn{2}{l|}{2.04e-2\qquad ---}                                                                                                         \\ \cline{2-11} 
& 0.4443                                  &         12001                                    & \multicolumn{2}{l|}{3.05e-3\qquad 2.74}                                                                        & \multicolumn{2}{l|}{3.05e-3\qquad 2.74}                                                                      & \multicolumn{2}{l|}{3.05e-3\qquad 2.74}                                                                  & \multicolumn{2}{l|}{3.05e-3\qquad 2.74}                                                                                                         \\ \cline{2-11} 
& 0.2221                                  &        48001                                     & \multicolumn{2}{l|}{3.98e-4\qquad 2.94}                                                                        & \multicolumn{2}{l|}{3.98e-4\qquad 2.94}                                                                      & \multicolumn{2}{l|}{3.99e-4\qquad 2.93}                                                                  & \multicolumn{2}{l|}{3.98e-4\qquad 2.94}                                                                                                         \\ \cline{2-11} 
& 0.1777                                  &     75001                                        & \multicolumn{2}{l|}{2.04e-4\qquad 2.99}                                                                        & \multicolumn{2}{l|}{2.04e-4\qquad 2.99}                                                                      & \multicolumn{2}{l|}{2.05e-4\qquad 2.99}                                                                  & \multicolumn{2}{l|}{2.04e-4\qquad 2.99}                                                                                                         \\ \hline
\multirow{4}{*}{2}                       & 0.8886                                  &      5201                                       & \multicolumn{2}{l|}{1.40e-3\qquad ---}                                                                        & \multicolumn{2}{l|}{1.40e-3\qquad ---}                                                                      & \multicolumn{2}{l|}{1.40e-3\qquad ---}                                                                  & \multicolumn{2}{l|}{1.40e-3\qquad ---}                                                                                                         \\ \cline{2-11} 
& 0.4443                                  &     20801                                        & \multicolumn{2}{l|}{8.06e-5\qquad 4.12}                                                                        & \multicolumn{2}{l|}{8.05e-5\qquad 4.12}                                                                      & \multicolumn{2}{l|}{8.10e-5\qquad 4.11}                                                                  & \multicolumn{2}{l|}{8.05e-5\qquad 4.12}                                                                                                         \\ \cline{2-11} 
& 0.2221                                  &       83201                                      & \multicolumn{2}{l|}{4.94e-6\qquad 4.03}                                                                        & \multicolumn{2}{l|}{4.93e-6\qquad 4.03}                                                                      & \multicolumn{2}{l|}{4.97e-6\qquad 4.03}                                                                  & \multicolumn{2}{l|}{4.93e-6\qquad 4.03}                                                                                                         \\ \cline{2-11} 
& 0.1777                                  &    130001                                         & \multicolumn{2}{l|}{2.01e-6\qquad 4.02}                                                                        & \multicolumn{2}{l|}{2.01e-6\qquad 4.02}                                                                      & \multicolumn{2}{l|}{2.03e-6\qquad 4.01}                                                                  & \multicolumn{2}{l|}{2.01e-6\qquad 4.02}                                                                                                         \\ \hline
\end{tabular}
\caption{Taylor-Green vortex: Comparison of the velocities calculated with Scheme \textsf{dG2} and the scheme in \cite{akbas2018analogue,Ern2012} when $\nu=0.01$ and $\gamma=1000.0$ at $t = 0.5s$}
\label{ch_2_Taylor}
\end{table}

From Table \ref{ch_2_Taylor}, we observe that all schemes achieve the expected order of convergence and roughly the same level of accuracy.

Next, we study the Kovasznay flow with the analytical solution given by \cite{Ern2012}
\begin{align*}
\label{kovasznay}
    &\ubold(t,\xbold)=\bigg(1-\text{e}^{\lambda x_1}\cos(2\pi x_2),\frac{\lambda}{2\pi}\text{e}^{\lambda x_1}\sin(2\pi x_2)\bigg), \\
    &p(t,\xbold)=-\frac{1}{2}\text{e}^{2\lambda  x_1}-\frac{1}{8\lambda}\bigg(\text{e}^{- \lambda}-\text{e}^{3\lambda}\bigg)
\end{align*}
Here $\lambda=\frac{1}{2\nu}-(\frac{1}{4\nu^2}+4\pi^{2})^{\frac{1}{2}}$ and $\nu=0.025$. The simulation domain is set to be  $\Omega:=[-0.5,0]\times[1.5,2]$. All schemes are tested with $\gamma=1000.0$ and polynomial degree $k \in\{0,1,2\}$ on uniform meshes with mesh size $h_{\text{max}}\in\{ 0.2828, 0.1414, 0.0707, 0.0566\}$. 
\begin{table}[]
\begin{tabular}{|l|l|l|l|l|l|l|l|l|l|l|}
\hline
\multicolumn{1}{|c|}{\multirow{2}{*}{$k$}} & \multicolumn{1}{c|}{\multirow{2}{*}{$h_{\text{max}}$}} & \multicolumn{1}{c|}{\multirow{2}{*}{d.o.f}} & \multicolumn{6}{c|}{Our schemes $\|\ubold-\ubold_{h}\|_{L^{2}(\Omega)}$}                                                                                                                                                                                                                                                    & \multicolumn{2}{c|}{\multirow{2}{*}{\begin{tabular}[c]{@{}c@{}}Scheme in \cite{akbas2018analogue,Ern2012}\\ $\|\ubold-\ubold_{h}\|_{L^{2}(\Omega)}$\\ Error\qquad   order\end{tabular}}} \\ \cline{4-9}
\multicolumn{1}{|c|}{}                   & \multicolumn{1}{c|}{}                   & \multicolumn{1}{c|}{}                       & \multicolumn{2}{c|}{\begin{tabular}[c]{@{}c@{}}$\theta=0$\\ Error\qquad order\end{tabular}} & \multicolumn{2}{c|}{\begin{tabular}[c]{@{}c@{}}$\theta=1$\\ Error\qquad   order\end{tabular}} & \multicolumn{2}{c|}{\begin{tabular}[c]{@{}c@{}}$\theta=-1$\\ Error\qquad   order\end{tabular}} & \multicolumn{2}{c|}{}                                                                                                         \\ \hline
\multirow{4}{*}{0}                       & 0.2828                                  &    1401                                         & \multicolumn{2}{l|}{9.26e-2\qquad ---}                                                                        & \multicolumn{2}{l|}{9.26e-2\qquad ---}                                                                      & \multicolumn{2}{l|}{9.27e-2\qquad ---}                                                                  & \multicolumn{2}{l|}{1.49e-1\qquad ---}                                                                                                         \\ \cline{2-11} 
& 0.1414                                  &      5601                                       & \multicolumn{2}{l|}{2.40e-2\qquad 1.95}                                                                        & \multicolumn{2}{l|}{2.40e-2\qquad 1.95}                                                                      & \multicolumn{2}{l|}{2.40e-2\qquad 1.95}                                                                  & \multicolumn{2}{l|}{2.98e-2\qquad 2.33}                                                                                                         \\ \cline{2-11} 
& 0.0707                                  &      22041                                       & \multicolumn{2}{l|}{6.13e-3\qquad1.97}                                                                        & \multicolumn{2}{l|}{6.13e-3\qquad1.97}                                                                      & \multicolumn{2}{l|}{6.14e-3\qquad1.97}                                                                  & \multicolumn{2}{l|}{6.53e-3\qquad2.19}                                                                                                         \\ \cline{2-11} 
& 0.0566                                  &      35001                                       & \multicolumn{2}{l|}{3.95e-3\qquad 1.97}                                                                        & \multicolumn{2}{l|}{3.95e-3\qquad 1.97}                                                                      & \multicolumn{2}{l|}{3.95e-3\qquad 1.97}                                                                  & \multicolumn{2}{l|}{4.11e-3\qquad 2.08}                                                                                                         \\ \hline
\multirow{4}{*}{1}                       & 0.2828                                  &    3001                                         & \multicolumn{2}{l|}{1.08e-2\qquad ---}                                                                        & \multicolumn{2}{l|}{1.08e-2\qquad ---}                                                                      & \multicolumn{2}{l|}{1.08e-2\qquad ---}                                                                  & \multicolumn{2}{l|}{1.12e-2\qquad ---}                                                                                                         \\ \cline{2-11} 
& 0.1414                                  &         12001                                    & \multicolumn{2}{l|}{1.35e-3\qquad 3.01}                                                                        & \multicolumn{2}{l|}{1.35e-3\qquad 3.01}                                                                      & \multicolumn{2}{l|}{1.35e-3\qquad 3.01}                                                                  & \multicolumn{2}{l|}{1.35e-3\qquad 3.06}                                                                                                         \\ \cline{2-11} 
& 0.0707                                  &        48001                                     & \multicolumn{2}{l|}{1.68e-4\qquad 3.00}                                                                        & \multicolumn{2}{l|}{1.68e-4\qquad 3.00}                                                                      & \multicolumn{2}{l|}{1.68e-4\qquad 3.00}                                                                  & \multicolumn{2}{l|}{1.68e-4\qquad 3.00}                                                                                                         \\ \cline{2-11} 
& 0.0566                                  &     75001                                        & \multicolumn{2}{l|}{8.58e-5\qquad 3.00}                                                                        & \multicolumn{2}{l|}{8.57e-5\qquad 3.00}                                                                      & \multicolumn{2}{l|}{8.58e-5\qquad 3.00}                                                                  & \multicolumn{2}{l|}{8.57e-5\qquad 3.00}                                                                                                         \\ \hline
\multirow{4}{*}{2}                       & 0.2828                                  &      5201                                       & \multicolumn{2}{l|}{8.93e-4\qquad ---}                                                                        & \multicolumn{2}{l|}{8.93e-4\qquad ---}                                                                      & \multicolumn{2}{l|}{8.93e-4\qquad ---}                                                                  & \multicolumn{2}{l|}{8.92e-4\qquad ---}                                                                                                         \\ \cline{2-11} 
& 0.1414                                  &     20801                                        & \multicolumn{2}{l|}{5.82e-5\qquad 3.94}                                                                        & \multicolumn{2}{l|}{5.82e-5\qquad 3.94}                                                                      & \multicolumn{2}{l|}{5.82e-5\qquad 3.94}                                                                  & \multicolumn{2}{l|}{5.82e-5\qquad 3.94}                                                                                                         \\ \cline{2-11} 
& 0.0707                                  &       83201                                      & \multicolumn{2}{l|}{3.73e-6\qquad 3.96}                                                                        & \multicolumn{2}{l|}{3.73e-6\qquad 3.96}                                                                      & \multicolumn{2}{l|}{3.73e-6\qquad 3.96}                                                                  & \multicolumn{2}{l|}{3.73e-6\qquad 3.96}                                                                                                         \\ \cline{2-11} 
& 0.0566                                  &    130001                                         & \multicolumn{2}{l|}{1.55e-6\qquad 3.95}                                                                        & \multicolumn{2}{l|}{1.55e-6\qquad 3.95}                                                                      & \multicolumn{2}{l|}{1.55e-6\qquad 3.95}                                                                  & \multicolumn{2}{l|}{1.55e-6\qquad 3.95}                                                                                                         \\ \hline
\end{tabular}
\caption{Kovasznay Flow: Comparison of the velocities calculated with Scheme \textsf{dG2} and the scheme  in \cite{akbas2018analogue,Ern2012} when $\nu=0.025$ and $\gamma=1000.0$}
\label{ch_2_Kflow}
\end{table}
From Table \ref{ch_2_Kflow}, we observe that all four schemes have the expected order of convergence. The three versions of our scheme achieve roughly the same level of accuracy, and relatively smaller absolute errors compared with scheme in \cite{Ern2012} especially when $k=0$. 

In summary, the consistent formulation \textsf{dG2} is comparable to classical dG schemes in the literature.
\subsection{Temporal Accuracy Test}
In this subsection, we use the following two-dimensional unsteady flow to investigate the temporal accuracy of our schemes \cite{kim2000second}
\begin{align*}
    &\ubold(t,\xbold)=\bigg(-\cos(\pi x_{1})\sin(\pi x_{2})e^{-2\pi ^2 \nu t},\sin(\pi x_{1})\cos(\pi x_{2})e^{-2\pi^2\nu t}\bigg), \\ &p(t,\xbold)=-\frac{1}{4}\bigg(\cos(2\pi x_{1})+\cos(2\pi x_{2})\bigg)e^{-4\pi ^2\nu t}.
\end{align*}
The computational domain is $\Omega:=[-0.5,-0.5]\times[0.5,0.5]$ and $\nu$ is set to be $0.1$. We focus on the \textsf{dG2} with $\theta=1.0$ and $\gamma=1000.0$ for simplicity of presentation. Simulations are performed with polynomial degree $k\in\{1,2\}$, mesh size $h_{\text{max}}=0.0283$, time steps $\tau\in\{ 0.01s, 0.02s, 0.04s,0.08s\}$, and the total simulation time $T=0.8s$. It could be observed from Table \ref{ch_2_temporal} that the Crank-Nicolson time discretization has reasonable  second order accuracy.
\begin{table}[]
\begin{tabular}{|c|l|l|l|l|l|}
\hline
$k$                  & \multicolumn{1}{c|}{$\tau$}    & \multicolumn{2}{c|}{$\|\ubold-\ubold_h\|_{L^{2}(\Omega)}$} & \multicolumn{2}{c|}{Order} \\ \hline
\multirow{4}{*}{1} & \multicolumn{1}{c|}{0.01s} & \multicolumn{2}{c|}{1.62e-5}      & \multicolumn{2}{c|}{---}      \\ \cline{2-6} 
& \multicolumn{1}{c|}{0.02s} & \multicolumn{2}{c|}{6.42e-5}      & \multicolumn{2}{c|}{1.99}      \\ \cline{2-6} 
& 0.04s                      & \multicolumn{2}{c|}{2.55e-4}      & \multicolumn{2}{c|}{1.99}      \\ \cline{2-6} 
& 0.08s                      & \multicolumn{2}{c|}{1.19e-3}      & \multicolumn{2}{c|}{2.23}      \\ \hline
\multirow{4}{*}{2} & 0.01s                      & \multicolumn{2}{c|}{1.61e-5}      & \multicolumn{2}{c|}{---}      \\ \cline{2-6} 
& 0.02s                      & \multicolumn{2}{c|}{6.41e-5}      & \multicolumn{2}{c|}{2.00}      \\ \cline{2-6} 
& 0.04s                      & \multicolumn{2}{c|}{2.55e-4}      & \multicolumn{2}{c|}{1.99}      \\ \cline{2-6} 
& 0.08s                      & \multicolumn{2}{c|}{1.19e-3}      & \multicolumn{2}{c|}{2.23}      \\ \hline
\end{tabular}
\caption{Temporal accuracy test with Scheme \textsf{dG2} when $\nu=0.1$, $\theta=1.0$, $h_{\text{max}}=0.0283$ and $\gamma=1000.0$}
\label{ch_2_temporal}
\end{table}
\subsection{Accuracy of Scheme \textsf{dG1}}\label{dG1_1}
In this subsection, we use Taylor-Green vortex and Kovasznay problems to study the behavior of the \textsf{dG1} formulation with respect to $\gamma\in\{1,10^3,10^6\}$. For simplicity, we focus on the case when $\theta=1.0$. The basic setting is as in Subsection \ref{c_2h_test}. Note that $\gamma$ is used to increase pressure robustness in the consistent formulation \textsf{dG2}. However,  \textsf{dG1} is not consistent and $\gamma$ is additionally used for weakly enforcing consistency. The main purpose here is to study the numerical influence of $\gamma$ on \textsf{dG1}.

In Table \ref{c_1h_Taylor}, we observe a decrease of absolute velocity errors when $\gamma$ increases, and optimal convergence when $\gamma=10^{6}$ for the Taylor-Green vortex. It can be observed from Figure~\ref{ch1_TGV} that weird contours of vorticity when $\gamma=1.0$ are cured by increasing $\gamma$ to $\gamma=10^3$ and $10^6$. The improved numerical performance of \textsf{dG1} as $\gamma$ grows is due to the enhanced consistency and pressure robustness by larger $\gamma$.
Further comparison between the corresponding columns in Tables \ref{ch_2_Taylor} and \ref{c_1h_Taylor} indicates that a larger $\gamma$ is needed to maintain the same level of accuracy as in the \textsf{dG2} formulation, especially when $k=2$.

In the case of the Kovasznay flow, the \textsf{dG1} and \textsf{dG2} formulations achieve roughly the same level of accuracy with $\gamma=1000.0$ for all values of $k$. In the high resolution case $h_{\max}\in\{0.0707,0.0566\}$, $k=2$,  $\gamma=10^{6}$, there is an abnormal reduction on the rate of convergence. We point out that this phenomenon could be fixed by further reducing the absolute and relative tolerances to $10^{-12}$ in the Newton solver. A possible explanation is that the condition number of the stiffness matrix with $\gamma=10^{6}$ of the linearized equations is significantly larger than stiffness matrices with $\gamma\in\{1,10^3\}$, and thus smaller iterative error tolerances are needed to recover numerical accuracy. When using coarse meshes and lower order polynomials (common in practice), we still recommended to use large $\gamma$. The observed different numerical behaviors with respect to $\gamma$ in steady and unsteady cases shed some light in the difference between stationary and evolutionary problems. 
\begin{table}[]
\begin{tabular}{|l|l|l|l|l|l|l|l|l|}
\hline
\multicolumn{1}{|c|}{\multirow{2}{*}{$k$}} & \multicolumn{1}{c|}{\multirow{2}{*}{$h_{\text{max}}$}} & \multicolumn{1}{c|}{\multirow{2}{*}{d.o.f}} & \multicolumn{6}{c|}{$\|\ubold-\ubold_h\|_{L^{2}(\Omega)}$}                                                                                                                                                                                                                                                    \\ \cline{4-9} 
\multicolumn{1}{|c|}{}                   & \multicolumn{1}{c|}{}                   & \multicolumn{1}{c|}{}                       & \multicolumn{2}{c|}{\begin{tabular}[c]{@{}c@{}}$\gamma=1.0$\\ Error\qquad   order\end{tabular}} & \multicolumn{2}{c|}{\begin{tabular}[c]{@{}c@{}}$\gamma=10^{3}$\\ Error\qquad  order\end{tabular}} & \multicolumn{2}{c|}{\begin{tabular}[c]{@{}c@{}}$\gamma=10^{6}$\\ Error\qquad   order\end{tabular}} \\ \hline
\multirow{4}{*}{0}                       & 0.8886                                  &         1401                                    & \multicolumn{2}{l|}{4.89e-1\qquad ---}                                                                        & \multicolumn{2}{l|}{2.02e-1\qquad ---}                                                                      & \multicolumn{2}{l|}{1.82e-1\qquad ---}                                                                  \\ \cline{2-9} 
& 0.4443                                  &          5601                                   & \multicolumn{2}{l|}{2.73e-1\qquad 8.42e-1}                                                                        & \multicolumn{2}{l|}{4.44e-2\qquad 2.18}                                                                      & \multicolumn{2}{l|}{4.35e-2\qquad 2.06}                                                                  \\ \cline{2-9} 
& 0.2221                                  &     22401                                        & \multicolumn{2}{l|}{1.85e-1\qquad 5.65e-1}                                                                        & \multicolumn{2}{l|}{1.03e-2\qquad 2.11}                                                                      & \multicolumn{2}{l|}{1.02e-2\qquad 2.09}                                                                  \\ \cline{2-9} 
& 0.1777                                  &     35001                                        & \multicolumn{2}{l|}{1.65e-1\qquad 5.00e-1}                                                                        & \multicolumn{2}{l|}{6.51e-3\qquad 2.06}                                                                      & \multicolumn{2}{l|}{6.48e-3\qquad 2.05}                                                                  \\ \hline
\multirow{4}{*}{1}                       & 0.8886                                  &      3001                                       & \multicolumn{2}{l|}{2.36e-1\qquad ---}                                                                        & \multicolumn{2}{l|}{2.04e-2\qquad ---}                                                                      & \multicolumn{2}{l|}{2.00e-2\qquad ---}                                                                  \\ \cline{2-9} 
& 0.4443                                  &      12001                                       & \multicolumn{2}{l|}{1.74e-1\qquad 4.42e-1}                                                                        & \multicolumn{2}{l|}{3.03e-3\qquad 2.75}                                                                      & \multicolumn{2}{l|}{3.04e-3\qquad 2.72}                                                                  \\ \cline{2-9} 
& 0.2221                                  &      48001                                       & \multicolumn{2}{l|}{1.25e-1\qquad 4.70e-1}                                                                        & \multicolumn{2}{l|}{4.37e-4\qquad 2.79}                                                                      & \multicolumn{2}{l|}{3.98e-4\qquad 2.93}                                                                  \\ \cline{2-9} 
& 0.1777                                  &   75001                                          & \multicolumn{2}{l|}{1.13e-1\qquad 4.82e-1}                                                                        & \multicolumn{2}{l|}{2.70e-4\qquad 2.16}                                                                      & \multicolumn{2}{l|}{2.04e-4\qquad 2.99}                                                                  \\ \hline
\multirow{4}{*}{2}                       & 0.8886                                  &    5201                                         & \multicolumn{2}{l|}{1.75e-1\qquad ---}                                                                        & \multicolumn{2}{l|}{1.42e-3\qquad ---}                                                                      & \multicolumn{2}{l|}{1.39e-3\qquad ---}                                                                  \\ \cline{2-9} 
& 0.4443                                  &    20801                                         & \multicolumn{2}{l|}{1.26e-1\qquad 4.70e-1}                                                                        & \multicolumn{2}{l|}{2.21e-4\qquad 2.68}                                                                      & \multicolumn{2}{l|}{8.05e-5\qquad 4.11}                                                                  \\ \cline{2-9} 
& 0.2221                                  &     83201                                        & \multicolumn{2}{l|}{9.02e-2\qquad 4.85e-1}                                                                        & \multicolumn{2}{l|}{1.48e-4\qquad 5.79e-1}                                                                      & \multicolumn{2}{l|}{4.93e-6\qquad 4.03}                                                                  \\ \cline{2-9} 
& 0.1777                                  &    130001                                         & \multicolumn{2}{l|}{8.09e-2\qquad 4.90e-1}                                                                        & \multicolumn{2}{l|}{1.33e-4\qquad 4.91e-1}                                                                      & \multicolumn{2}{l|}{2.01e-6\qquad 4.02}                                                                  \\ \hline
\end{tabular}
\caption{Taylor-Green vortex: Performance study of Scheme \textsf{dG1} when $\nu=0.01$ at $t = 0.5s$}
\label{c_1h_Taylor}
\end{table}
\begin{figure*}
\centering
\begin{multicols}{2}
    \includegraphics[width=0.75\linewidth,trim={7cm 2cm 0cm 2cm},clip]{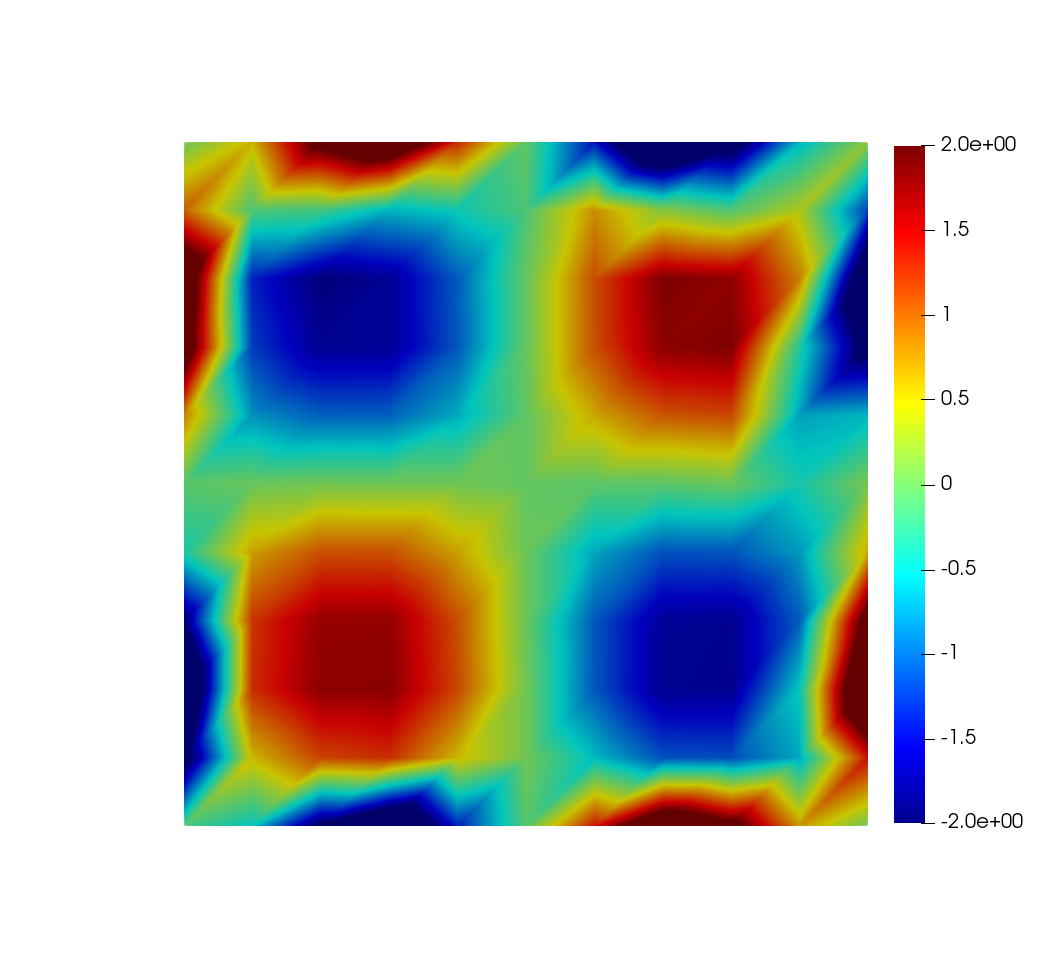}\par
    \includegraphics[width=0.75\linewidth,trim={7cm 2cm 0cm 2cm},clip]{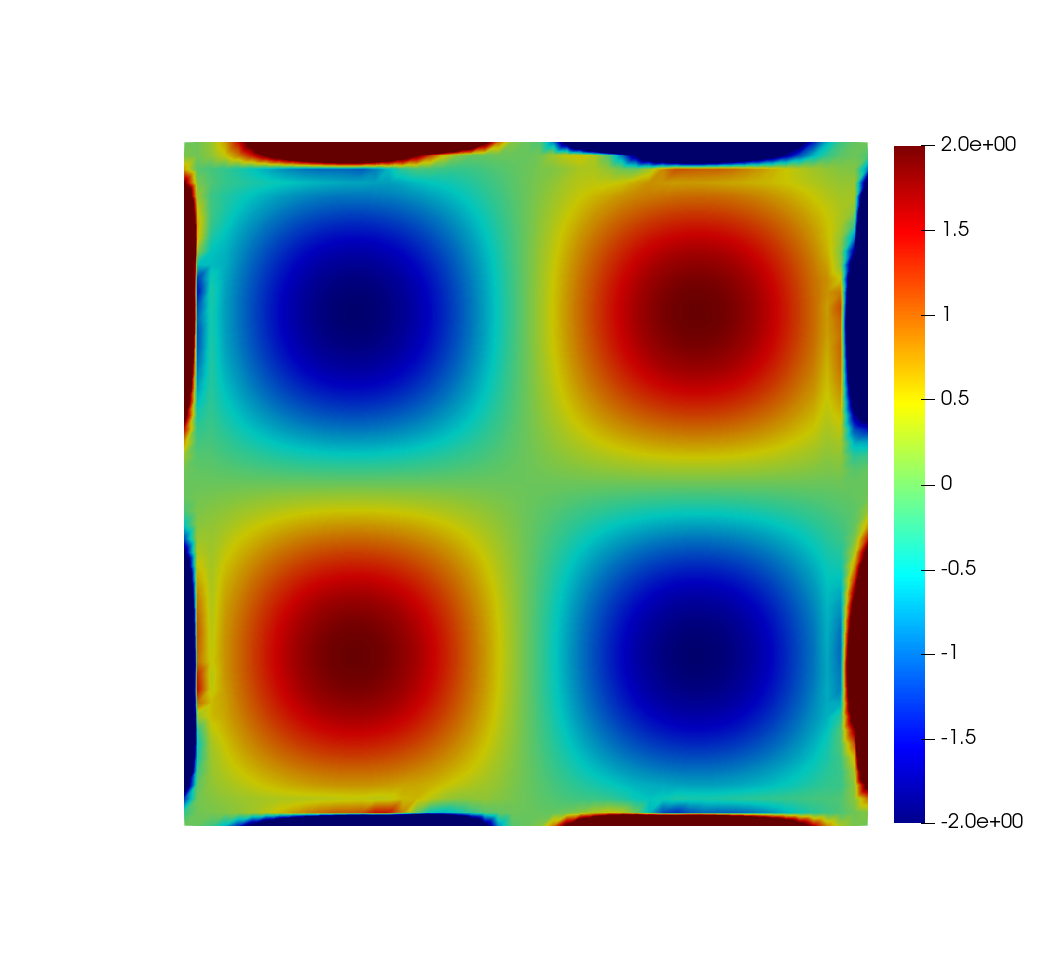}\par 
    \end{multicols}
\begin{multicols}{2}
    \includegraphics[width=0.75\linewidth,trim={7cm 2cm 0cm 2cm},clip]{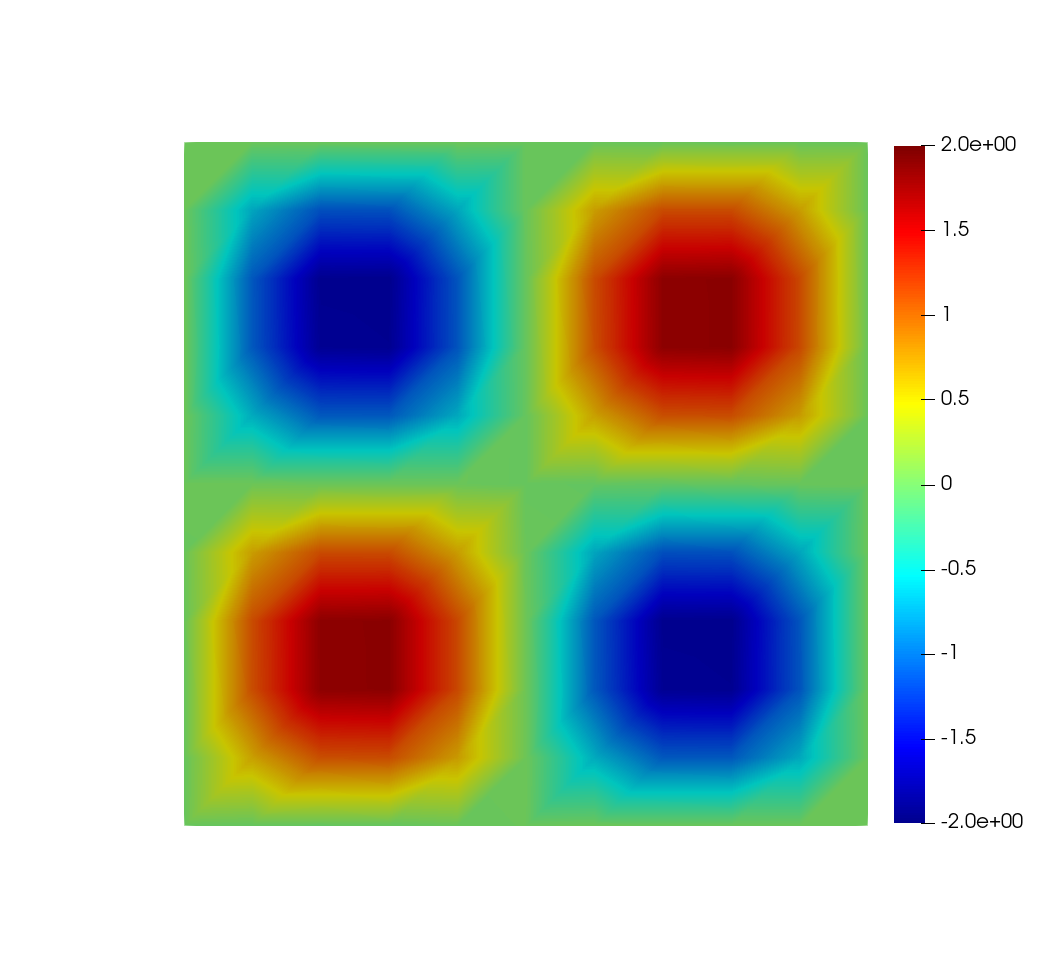}\par
    \includegraphics[width=0.75\linewidth,trim={7cm 2cm 0cm 2cm},clip]{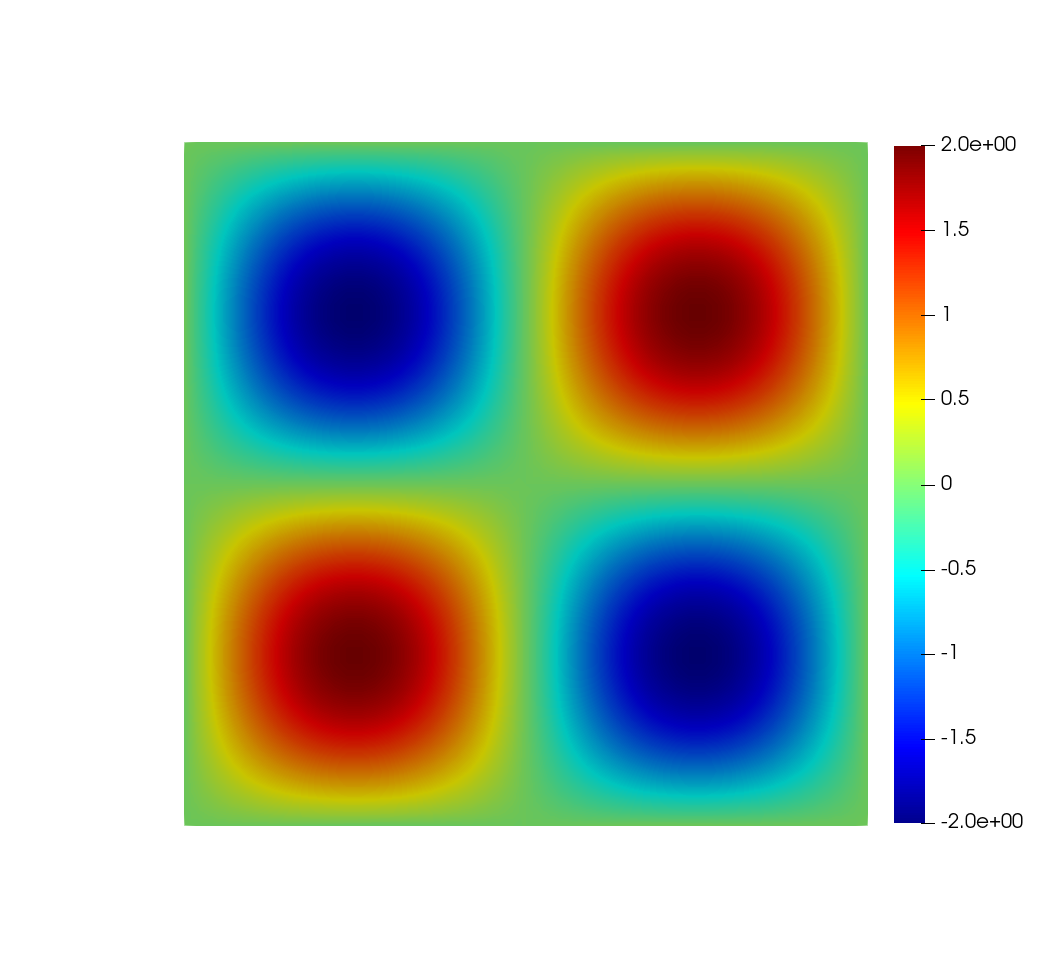}\par
\end{multicols}
\begin{multicols}{2}
    \includegraphics[width=0.75\linewidth,trim={7cm 2cm 0cm 2cm},clip]{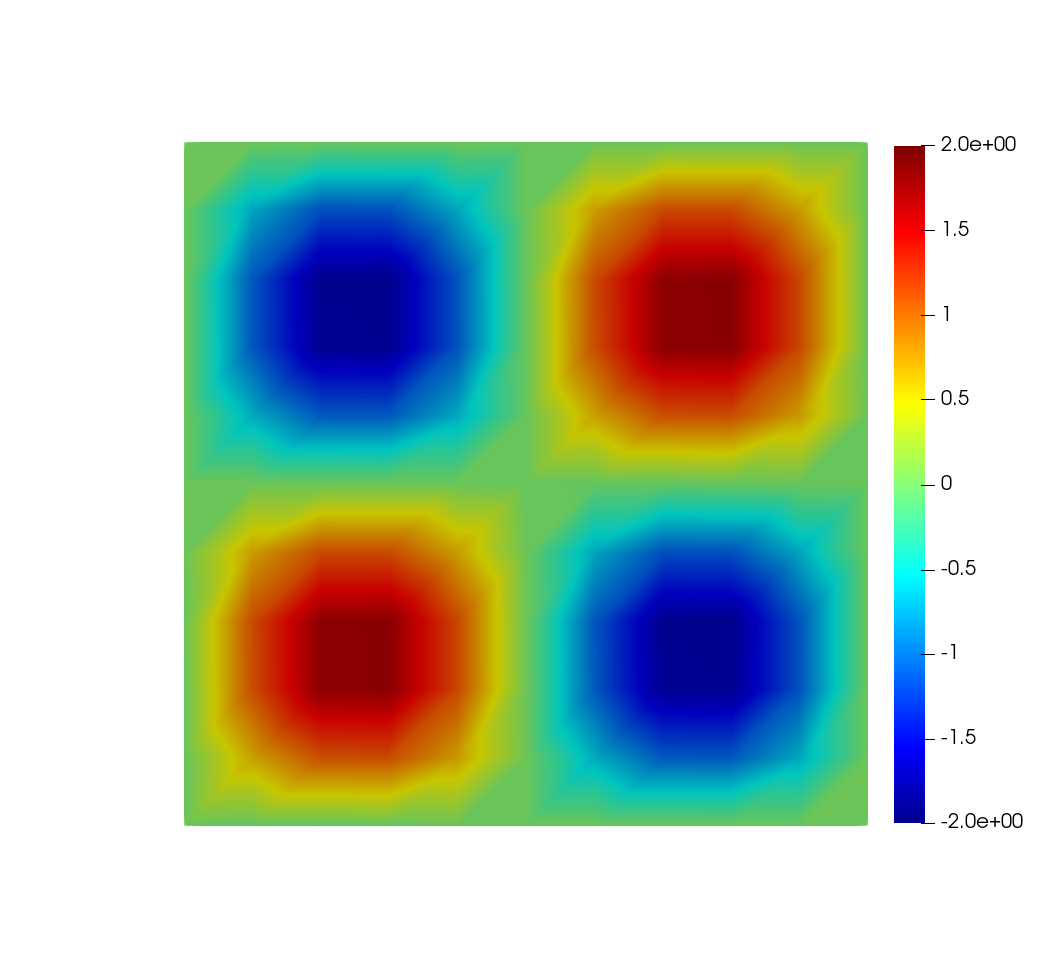}\par
    \includegraphics[width=0.75\linewidth,trim={7cm 2cm 0cm 2cm},clip]{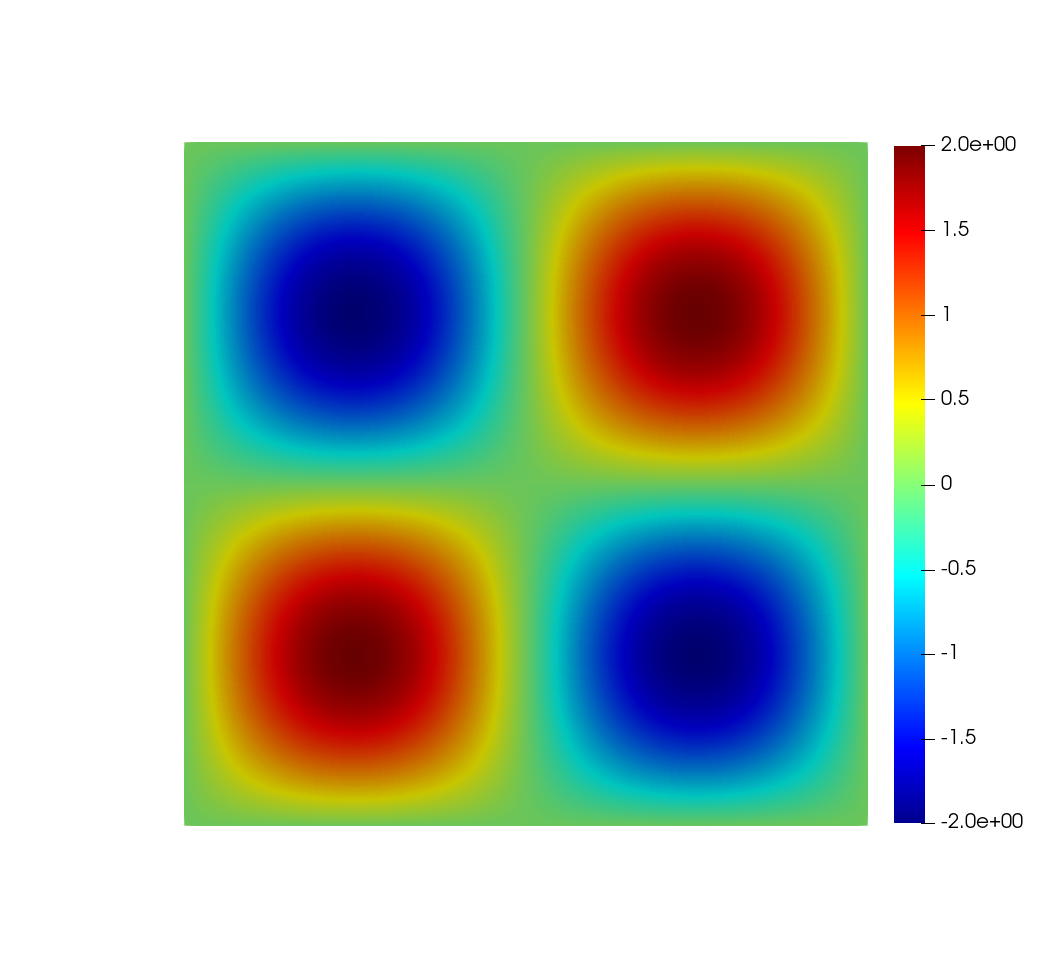}\par
\end{multicols}
\caption{Taylor-Green vortex, \textsf{dG1}: Contours of vorticity with $\gamma=1$ (upper row), $\gamma=10^{3}$ (middle row) and $\gamma=10^{6}$ (lower row) when $h_{\text{max}}=0.8886$ (left) and $h_{\text{max}}=0.1777$ (right) at $t=0.5s$ with $\nu=0.01$ and $k=2$}
\label{ch1_TGV}
\end{figure*}
\begin{table}
\begin{center}
\begin{tabular}{|l|l|l|l|l|l|l|l|l|}
\hline
\multicolumn{1}{|c|}{\multirow{2}{*}{$k$}} & \multicolumn{1}{c|}{\multirow{2}{*}{$h_{\text{max}}$}} & \multicolumn{1}{c|}{\multirow{2}{*}{d.o.f}} & \multicolumn{6}{c|}{$\|\ubold-\ubold_h\|_{L^{2}(\Omega)}$}                                                                                                                                                                                                                                                    \\ \cline{4-9} 
\multicolumn{1}{|c|}{}                   & \multicolumn{1}{c|}{}                   & \multicolumn{1}{c|}{}                       & \multicolumn{2}{c|}{\begin{tabular}[c]{@{}c@{}}$\gamma=1.0$\\ Error\qquad   order\end{tabular}} & \multicolumn{2}{c|}{\begin{tabular}[c]{@{}c@{}}$\gamma=10^{3}$\\ Error\qquad  order\end{tabular}} & \multicolumn{2}{c|}{\begin{tabular}[c]{@{}c@{}}$\gamma=10^{6}$\\ Error\qquad   order\end{tabular}} \\ \hline
\multirow{4}{*}{0}                       & 0.2828                                 &         1401                                   & \multicolumn{2}{l|}{1.40e-1\qquad ---}                                                                        & \multicolumn{2}{l|}{9.26e-2\qquad ---}                                                                      & \multicolumn{2}{l|}{9.26e-2\qquad ---}                                                                  \\ \cline{2-9} 
& 0.1414                                 &          5601                                   & \multicolumn{2}{l|}{4.35e-2\qquad 1.69}                                                                        & \multicolumn{2}{l|}{2.40e-2\qquad 1.95}                                                                      & \multicolumn{2}{l|}{2.40e-2\qquad 1.95}                                                                  \\ \cline{2-9} 
& 0.0707                                  &     22401                                        & \multicolumn{2}{l|}{1.15e-2\qquad 1.92}                                                                        & \multicolumn{2}{l|}{6.13e-3\qquad 1.97}                                                                      & \multicolumn{2}{l|}{6.23e-3\qquad 1.95}                                                                  \\ \cline{2-9} 
& 0.0566                                  &     35001                                        & \multicolumn{2}{l|}{7.44e-3\qquad 1.93}                                                                        & \multicolumn{2}{l|}{3.95e-3\qquad 1.97}                                                                      & \multicolumn{2}{l|}{4.12e-3\qquad 1.84}                                                                  \\ \hline
\multirow{4}{*}{1}                       & 0.2828                                  &      3001                                       & \multicolumn{2}{l|}{1.92e-2\qquad ---}                                                                        & \multicolumn{2}{l|}{1.08e-2\qquad ---}                                                                      & \multicolumn{2}{l|}{1.08e-2\qquad ---}                                                                  \\ \cline{2-9} 
& 0.1414                                  &      12001                                       & \multicolumn{2}{l|}{6.10e-3\qquad 1.65}                                                                        & \multicolumn{2}{l|}{1.35e-3\qquad 3.01}                                                                      & \multicolumn{2}{l|}{1.35e-3\qquad 3.01}                                                                  \\ \cline{2-9} 
& 0.0707                                  &      48001                                       & \multicolumn{2}{l|}{2.20e-3\qquad 1.47}                                                                        & \multicolumn{2}{l|}{1.68e-4\qquad 3.00}                                                                      & \multicolumn{2}{l|}{1.27e-3\qquad 8.86e-2}                                                                  \\ \cline{2-9} 
& 0.0566                                  &   75001                                          & \multicolumn{2}{l|}{1.54e-3\qquad 1.61}                                                                        & \multicolumn{2}{l|}{8.59e-5\qquad 3.00}                                                                      & \multicolumn{2}{l|}{1.26e-3\qquad 3.78e-2}                                                                  \\ \hline
\multirow{4}{*}{2}                       & 0.2828                                 &    5201                                         & \multicolumn{2}{l|}{7.71e-3\qquad ---}                                                                        & \multicolumn{2}{l|}{8.94e-4\qquad ---}                                                                      & \multicolumn{2}{l|}{8.93e-4\qquad ---}                                                                  \\ \cline{2-9} 
& 0.1414                                  &    20801                                         & \multicolumn{2}{l|}{2.52e-3\qquad 1.61}                                                                        & \multicolumn{2}{l|}{5.85e-5\qquad 3.93}                                                                      & \multicolumn{2}{l|}{5.82e-5\qquad 3.94}                                                                  \\ \cline{2-9} 
& 0.0707                                  &     83201                                        & \multicolumn{2}{l|}{8.14e-4\qquad 1.63}                                                                        & \multicolumn{2}{l|}{3.90e-6\qquad 3.91}                                                                      & \multicolumn{2}{l|}{1.25e-3\qquad -4.42}                                                                  \\ \cline{2-9} 
& 0.0566                                  &    130001                                         & \multicolumn{2}{l|}{5.72e-4\qquad 1.58}                                                                        & \multicolumn{2}{l|}{1.72e-6\qquad 3.67}                                                                      & \multicolumn{2}{l|}{1.25e-3\quad\: $<0.001$}                                                                  \\ \hline
\end{tabular}
\caption{Kovasznay Flow: Performance study of the \textsf{dG1} formulation when $\nu=0.025$}
\label{c_1h_Kflow}
\end{center}
\end{table}

\clearpage
\subsection{Flow Over a Cylinder and Lid Driven Flow by \textsf{dG1}}\label{dG1_2}
In the last subsection, we continue to study the behavior of the \textsf{dG1} formulation for the problems of flow over a cylinder \cite{layton2009accuracy,schafer1996benchmark,LangtangenLogg2017} and lid driven flow when $\theta=1.0$ and $\gamma=1000.0$. The goal is to check whether the scheme could capture the essential physical features of such flows. Recall that $\textsf{dG1}$ is not a consistent formulation by nature, and its consistency is weakly imposed through $\gamma$.

We first study the flow over a cylinder. A primary feature of this problem is the formation of von K\'arm\'an vortex street, which depends on both the spatial and temporal discretizations. Therefore the ability to have vortex properly generated is not trivial, see, e.g., \cite{xi2020,layton2009accuracy} for numerical examples failing to have this property. The domain is set to be $\Omega:=\left([0,2.2]\times[0,0.41]\right)\backslash B$ with $B$ a disk centered at $(0.2,0.2)$ and of radius $0.05$. We run the simulation for $8s$ using a time step $\tau_h=0.01s$. The flow is subjected to the following inflow and outflow profile (\cite{layton2009accuracy})
\begin{align*}
    &u(t,0,x_2)=u(t,2.2,x_2)=\frac{6}{0.41^{2}}\sin(\pi t/8)x_2(0.41-x_2),\\
    &v(t,0,x_2)=v(t,2.2,x_2)=0,
\end{align*}
and vanishing velocity on the rest of $\partial\Omega$. The viscosity is set to be $0.001$, and therefore the mean Reynolds number ranges from $0$ to $100$.

From Figure~\ref{cylinder_velocity_contour_ch1}, we observe that 
\textsf{dG1} is able to recover the vortex formation successfully.

Finally, we consider the lid driven flow on the square domain $[0,1]^2$ with a tangential velocity $\ubold=(1,0)$ on the top and vanishing velocity on the other sides. We are interested in the flow with $Re=400$, which has a large gradient near the lid due to the singularities introduced by the jump of the velocity on the top two corners, and the recovery of the small corner vortex is essential.
\begin{figure}
      \includegraphics[width=1.0\textwidth,trim={0cm 12cm 0cm 10cm},clip]{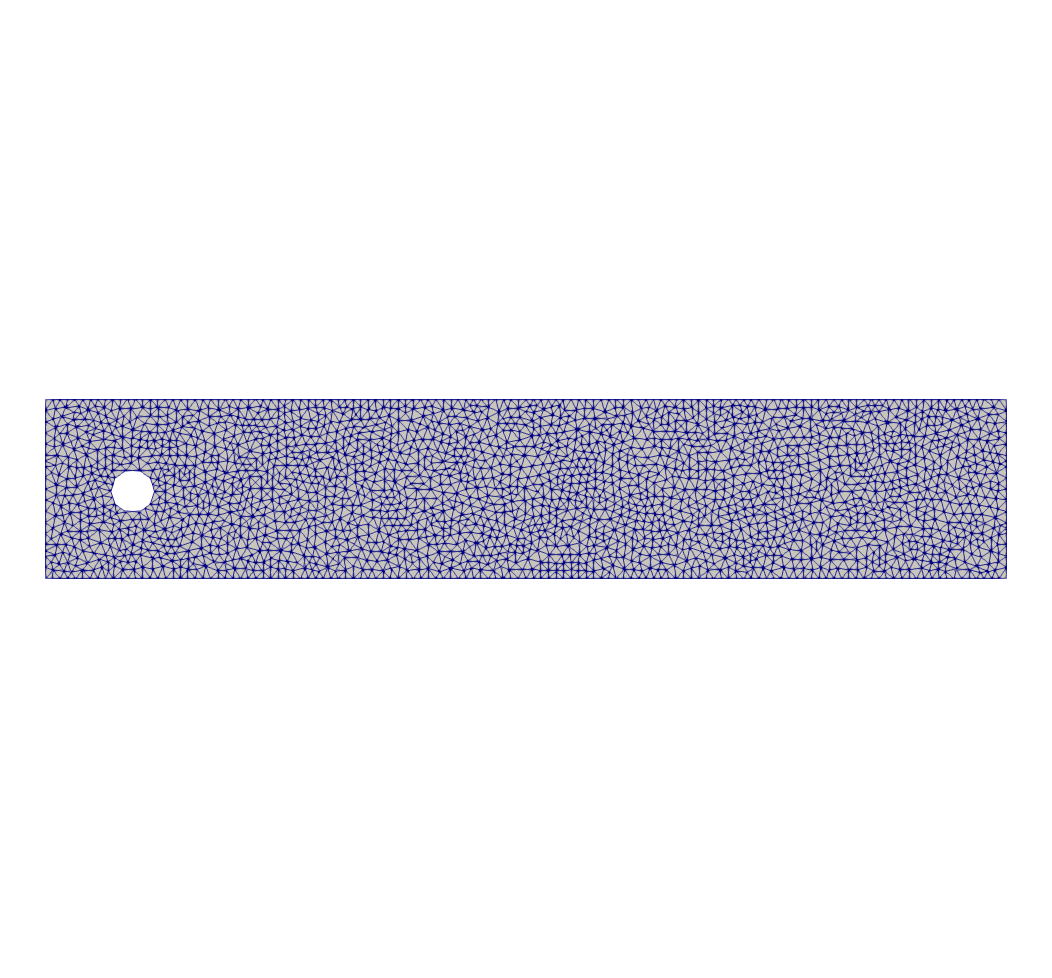}
      \vspace{4pt}
  \caption{Mesh for the flow around  a cylinder}
  \label{cylinder_mesh}
\end{figure}
\begin{figure}
      \includegraphics[width=1.0\textwidth,trim={0cm 12cm 0cm 10cm},clip]{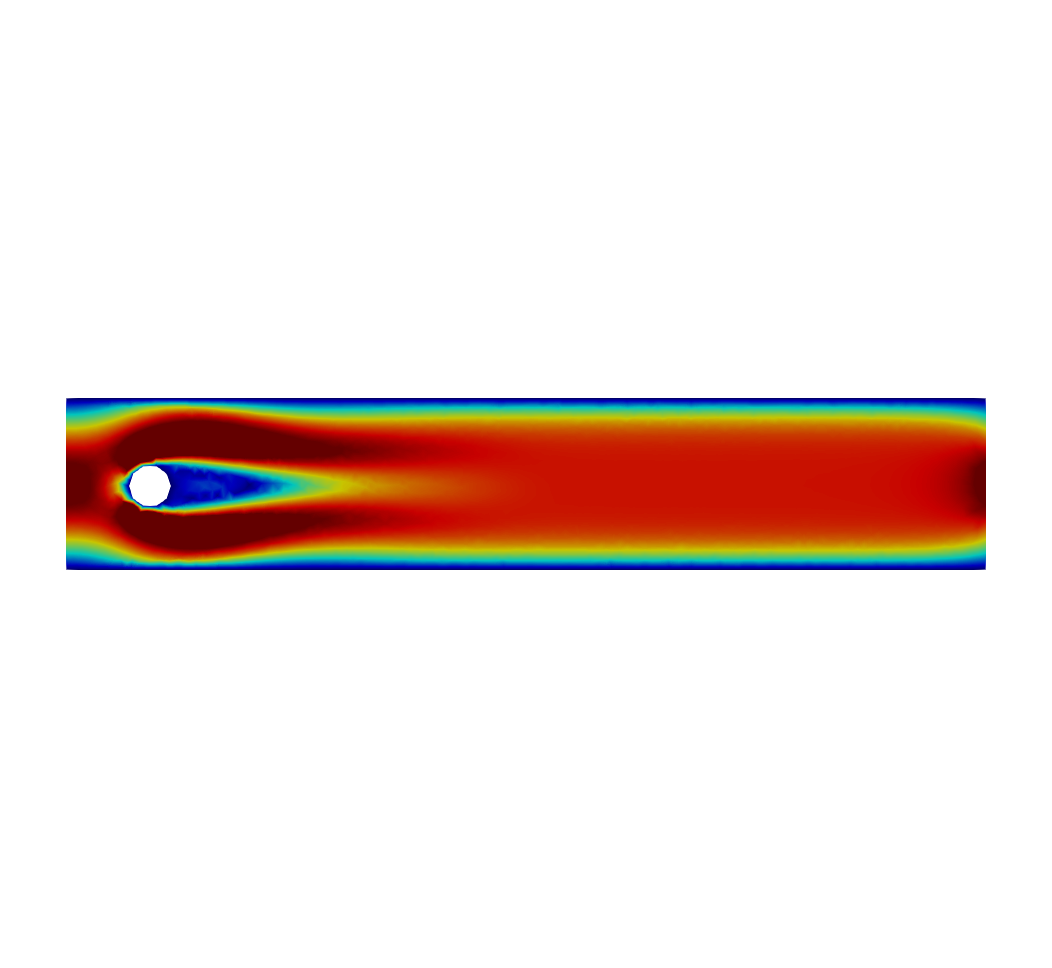}
      \includegraphics[width=1.0\textwidth,trim={0cm 12cm 0cm 10cm},clip]{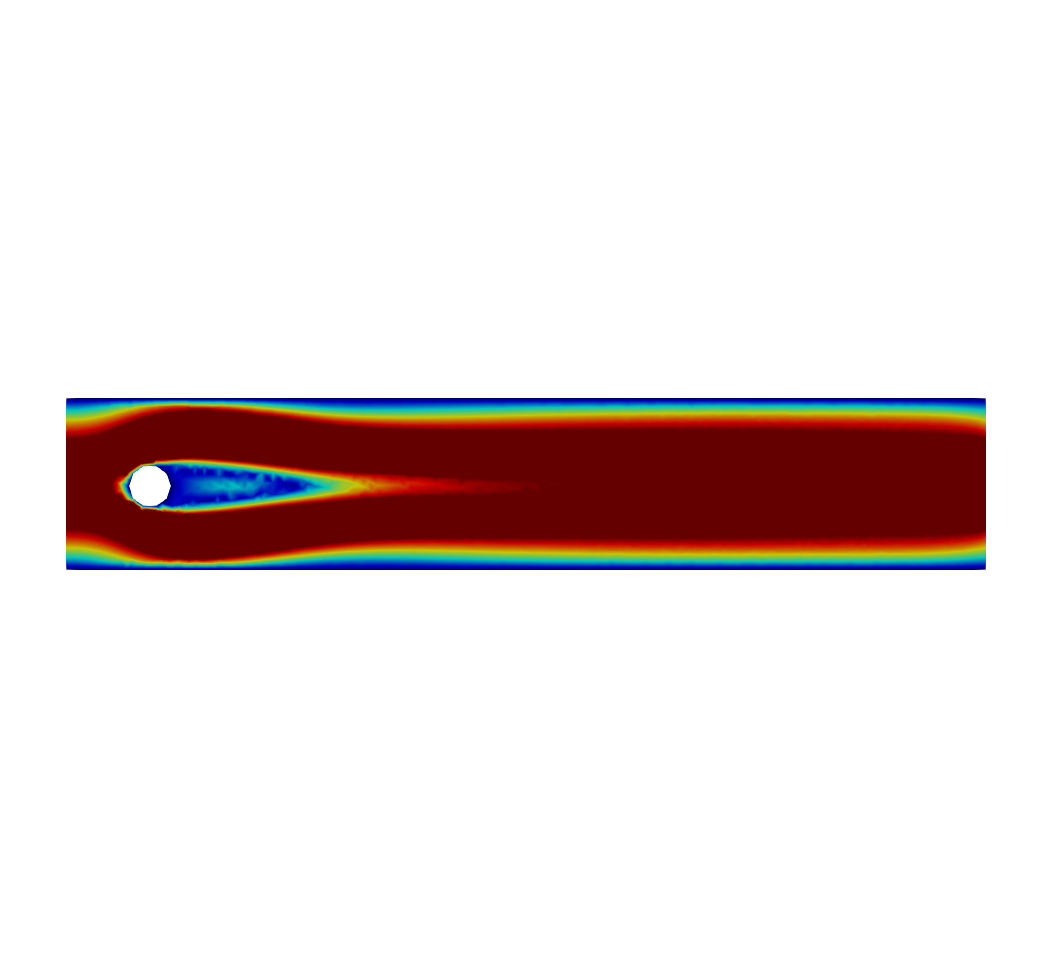}
      \includegraphics[width=1.0\textwidth,trim={0cm 12cm 0cm 10cm},clip]{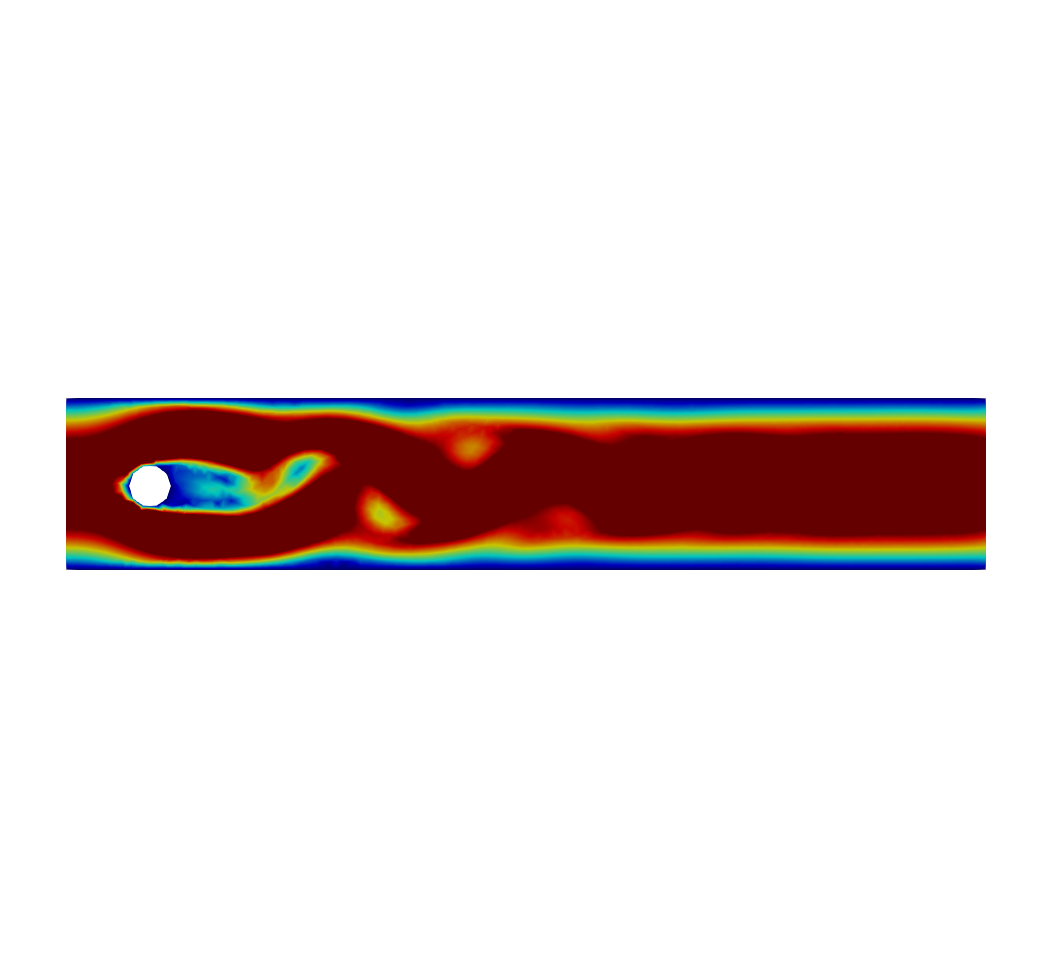}
      \includegraphics[width=1.0\textwidth,trim={0cm 12cm 0cm 10cm},clip]{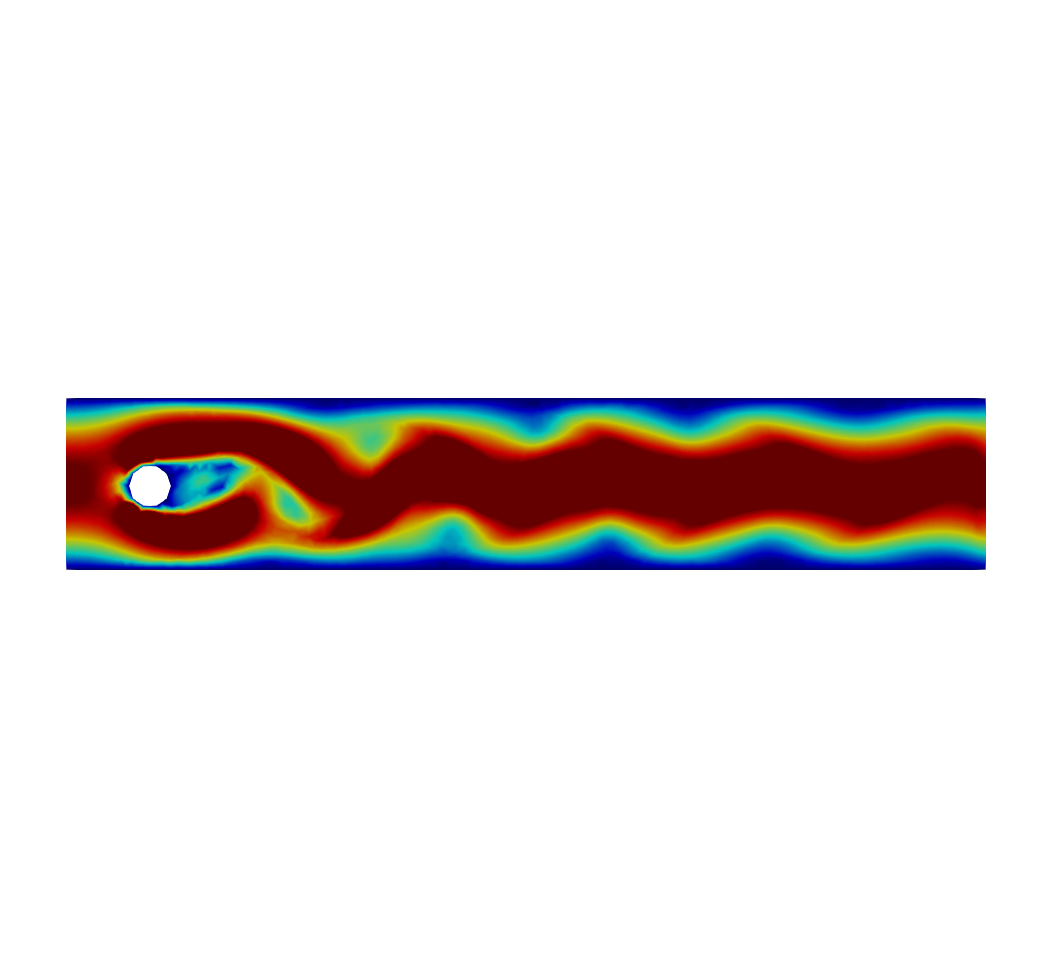}
      \vspace{4pt}
  \caption{$\textsf{dG1}$ when $\theta=1.0$: Contour of velocity magnitude when $t=2s, 3s, 5s, 6s$ from top to bottom}
  \label{cylinder_velocity_contour_ch1}
\end{figure}
\begin{figure}[ht]
\centering
\begin{subfigure}{0.4\textwidth}
  \centering
  \includegraphics[width=1.0\linewidth,trim={7cm 2cm 0cm 2cm},clip]{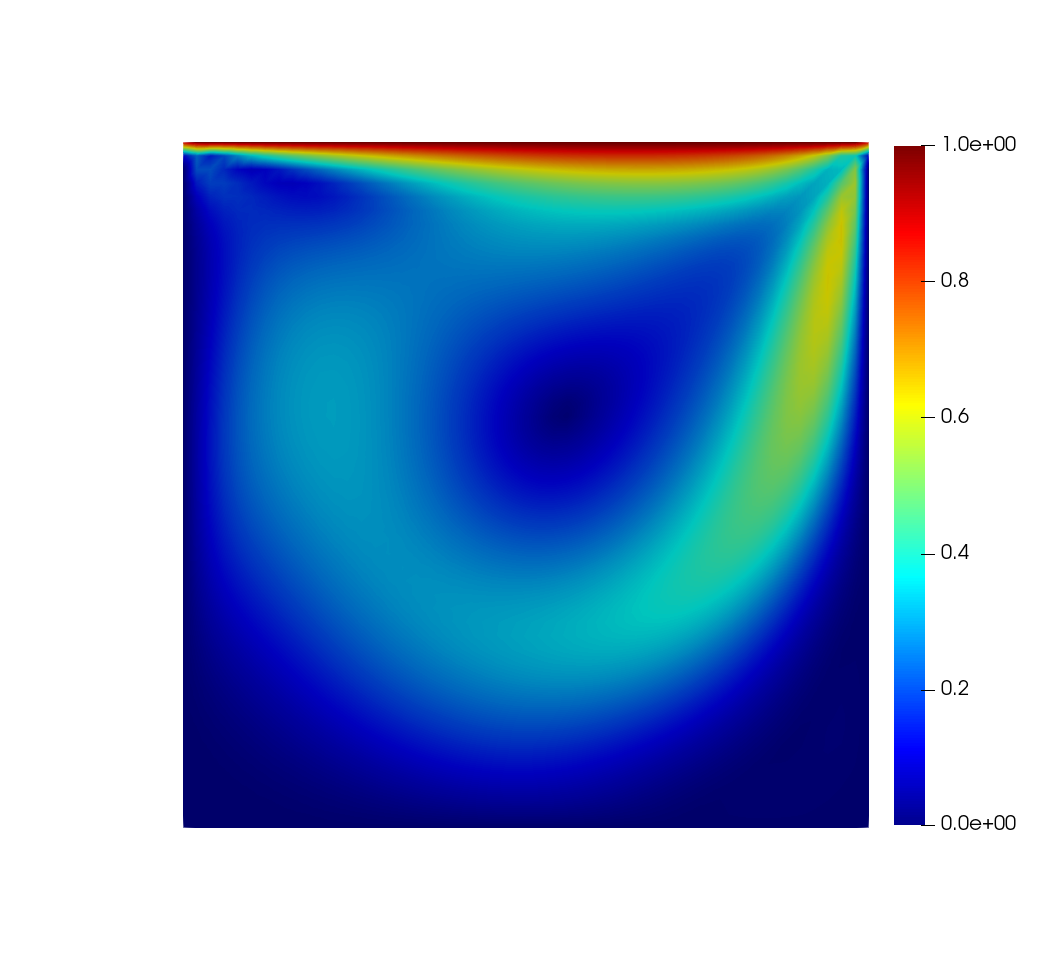}
\end{subfigure}%
\begin{subfigure}{.4\textwidth}
  \centering
  \includegraphics[width=1.0\linewidth,trim={7cm 2cm 0cm 2cm},clip]{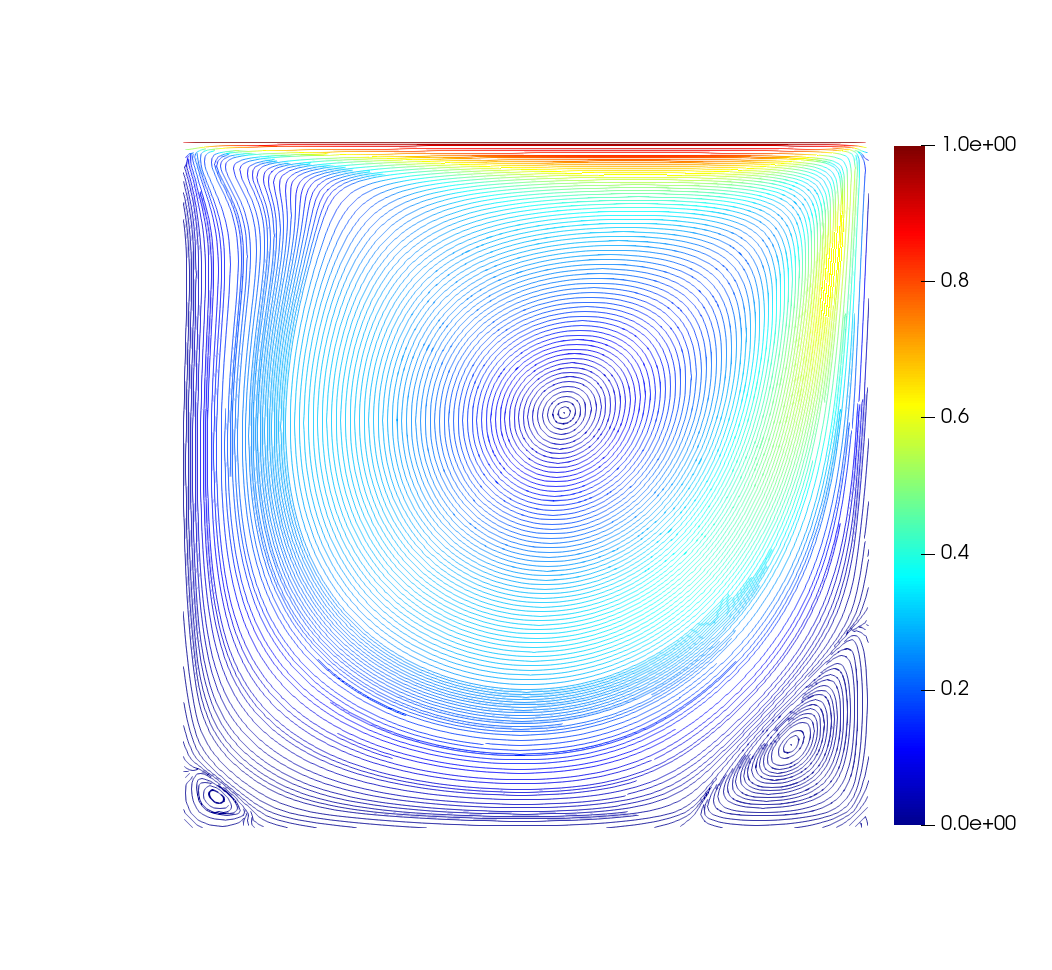}
\end{subfigure}
\caption{Lid Driven Flow: Contours of velocity magnitude (left) and stream trace (right) when $Re=400$, $k=3$, $h=0.0283$ and $\gamma=1000.0$.}
\label{fig:Lid_ch1}
\end{figure}

Figure~\ref{fig:Lid_ch1} shows that
\textsf{dG1} is able to capture the essential features of the lid driven flow. 

From Subsections \ref{dG1_1} and \ref{dG1_2}, we observe that the \emph{inconsistent} \textsf{dG1} formulation actually works well when choosing sufficiently large parameter $\gamma$. This fact was explained by our argument below equation \eqref{eqnZh}. In particular, experiments show that with properly chosen $\gamma$, one could still achieve high numerical accuracy and capture essential physical features.

\clearpage
\section{Conclusion}\label{conclusion}
We developed two classes of dG methods for the incompressible Euler and incompressible Navier-Stokes equations with the kinematic pressure, Bernoulli function and EMAC function with both semi- and fully discrete stability. When the velocity is $H(\text{div})$-conforming, we show that central flux conserves energy, linear momentum and angular momentum, while upwind flux conserves linear momentum and angular momentum under appropriate assumptions.
Numerical experiments were performed to demonstrate our findings, test the performances of the schemes, and compare with conventional schemes in the literature. When the velocity is $H(\text{div})$-conforming, the simulation results show that global conservation of physical quantities is not enough to guarantee the performance of the schemes, since they are not good indicators of the local behaviors. For the dG schemes, our schemes tend to achieve smaller absolute error when $k=0$ in Kovasznay flow, while achieve roughly the same order of accuracy otherwise in both the Taylor-Green vortex and Kovasznay flows when scheme \textsf{dG2} is considered. Temporal accuracy test shows that the Crank-Nicolson time discretization indeed has second order accuracy. Furthermore, we also show through
numerical examples that fairly accurate results for velocity could be obtained when the scheme \textsf{dG1} formulation is used with the Bernoulli function with a suitable penalization. The observation of locking in the Kovasznay flow indicates the important difference in the finite element schemes for steady and unsteady problems. Finally, with the problems of flow around a cylinder and lid driven flow, we show that \textsf{dG1} is able to capture the essential physics of the problem when the governing equations are with the Bernoulli function.
\section*{Data Availability Statement}

The data that support the findings of this study are available from the corresponding author upon reasonable request.
\clearpage

\providecommand{\bysame}{\leavevmode\hbox to3em{\hrulefill}\thinspace}
\providecommand{\MR}{\relax\ifhmode\unskip\space\fi MR }
\providecommand{\MRhref}[2]{%
  \href{http://www.ams.org/mathscinet-getitem?mr=#1}{#2}
}
\providecommand{\href}[2]{#2}

\end{document}